\numberwithin{equation}{section}
\theoremstyle{plain}
\newtheorem{Th}{Theorem}[section]
\newtheorem{Lemma}[Th]{Lemma}
\newtheorem{Prop}[Th]{Proposition}
 \theoremstyle{definition}
\newtheorem{Def}[Th]{Definition}
\newtheorem{Rem}[Th]{Remark}
\newtheorem{?}[Th]{Problem}
\newtheorem{Cons}[Th]{Construction}
\newcommand{\ovl}{\overline}
\newcommand{\Pm}{\operatorname{pm}}
\newcommand{\C}{\mathbb{C}}
\begin{document}

\title{Matchings in Benjamini--Schramm convergent graph sequences}

\author[M. Ab\'ert]{Mikl\'os Ab\'ert}

\address{Alfr\'ed
R\'enyi Institute of Mathematics \\ H-1053 Budapest \\ Re\'altanoda
u. 13-15. \\ Hungary}

\email{abert@renyi.hu}

\author[P. Csikv\'ari]{P\'{e}ter Csikv\'{a}ri}

\address{Massachusets Institute of Technology \\ Department of Mathematics \\
Cambridge MA 02139 \&  E\"{o}tv\"{o}s Lor\'{a}nd University \\ Department of Computer
Science \\ H-1117 Budapest
\\ P\'{a}zm\'{a}ny P\'{e}ter s\'{e}t\'{a}ny 1/C \\ Hungary}

\email{peter.csikvari@gmail.com}

\author[P. E. Frenkel]{P\'eter E. Frenkel}

\address{E\"{o}tv\"{o}s Lor\'{a}nd University \\ Department of Algebra and
Number Theory \\ H-1117 Budapest
\\ P\'{a}zm\'{a}ny P\'{e}ter s\'{e}t\'{a}ny 1/C \\ Hungary}

\email{frenkelp@cs.elte.hu}

\author[G. Kun]{G\'abor Kun}

\address{E\"{o}tv\"{o}s Lor\'{a}nd University \\ Department of Computer
  Science \\ H-1117 Budapest
\\ P\'{a}zm\'{a}ny P\'{e}ter s\'{e}t\'{a}ny 1/C \\ Hungary}

\email{kungabor@cs.elte.hu}

\thanks{The first and third authors were partially supported by ERC Consolidator Grant 648017. The first three authors were partially supported by the Hungarian National Foundation for Scientific Research (OTKA), grant no.\ K109684. The second author was partially supported by the National
Science Foundation under grant no.\ DMS-1500219 and the Hungarian National Foundation for
Scientific Research (OTKA), grant no.\ K81310.  The fourth author  is partially supported by the
Hungarian National Foundation for Scientific Research (OTKA) Grant
No.\ PD 109731, by the J\'anos Bolyai Scholarship of the Hungarian
Academy of Sciences, by the Marie Curie IIF Fellowship Grant No. 627476
and by ERC Advanced Research Grant No.\ 227701.
All authors are partially supported by MTA R\'enyi
"Lend\"ulet" Groups and Graphs Research Group.}

 \subjclass[2010]{Primary: 05C70. Secondary: 05C31, 05C69}

 \keywords{matchings, matching polynomial, Benjamini--Schramm
   convergence, expander graphs}

\begin{abstract} We introduce the {\emph{matching measure}} of a finite graph as the uniform distribution on the roots of the matching polynomial of the graph. We analyze the asymptotic behavior of the matching measure for graph sequences with bounded degree.

A graph parameter is said to be estimable if it converges along every Benjamini--Schramm convergent sparse graph sequence.
We prove that the normalized logarithm of the number of matchings is estimable. We also show that the analogous statement for \emph{perfect} matchings already fails for $d$--regular bipartite graphs for any fixed $d\ge 3$.  The latter result relies on analyzing the probability that a randomly chosen perfect matching contains a particular edge.

However, for any sequence of $d$--regular bipartite graphs converging to the $d$--regular tree, we prove that the normalized logarithm of the number of perfect matchings converges. This applies to random $d$--regular bipartite graphs. We show that the limit equals to the exponent in Schrijver's lower bound on the number of perfect matchings.

Our analytic approach also yields a short proof for the Nguyen--Onak (also Elek--Lippner) theorem saying that the matching ratio is estimable. In fact, we prove the slightly stronger result that the independence ratio is estimable for claw-free graphs.

\end{abstract}

\maketitle

\section{Introduction}

In this paper, we study the asymptotic behavior of the number of matchings
and perfect matchings for  Benjamini--Schramm convergent sequences of finite
graphs. Benjamini-Schramm convergence was introduced in \cite{bensch} and has
been under intense investigation since then.

For a finite graph $G$, a finite rooted graph $\alpha$ and a positive integer
$r$, let $\mathbb{P}(G,\alpha,r)$ be the probability that the $r$-ball
centered at a uniform random vertex of $G$ is isomorphic to $\alpha$ (as a
rooted graph). A sequence of finite graphs $(G_{n})$ is \emph{sparse} if the set of
degrees of vertices in $G_{n}$ ($n\geq1$) is bounded. A sparse graph sequence
$(G_{n})$ is \emph{Benjamini--Schramm convergent} if for all
finite rooted graphs $\alpha$ and $r>0$, the probabilities $\mathbb{P}%
(G_{n},\alpha,r)$ converge. This means that one cannot distinguish $G_{n}$
and $G_{n^{\prime}}$ for large $n$ and $n^{\prime}$ by sampling them at a
random vertex with a fixed radius of sight.

The graph parameter $p(G)$ is \emph{estimable} in a class $\mathcal{C}$ of
finite graphs if the sequence $p(G_{n})$ converges for all Benjamini--Schramm
convergent sparse graph sequences $(G_{n})$ in $\mathcal{C}$. When
$\mathcal{C}$ is the class of all finite graphs, we simply say that $p(G)$ is
estimable.

Let $v(G)$, $e(G)$, $\mathbb{M}(G)$, and $\operatorname{pm}(G)$ stand for the
number of vertices, edges, matchings, and perfect matchings in the graph $G$,
respectively. We write $\nu(G)$ and $\alpha(G)$ for the maximal size of a
matching, respectively an independent vertex set in $G$.

\subsection{Estimable matching parameters}

There are several examples of seemingly `global' graph parameters that turn
out to be estimable. A striking example is the following theorem of R. Lyons \cite{lyons}.

\begin{Th}
[R.\ Lyons]Let $\tau(G)$ denote the number of spanning trees in
the graph $G$. Then the \emph{tree entropy per site}
\[
\frac{\ln\tau(G)}{v(G)}%
\]
is estimable in the class of connected graphs.
\end{Th}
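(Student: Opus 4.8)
The plan is to reduce the statement to the weak convergence of the empirical spectral measures of the combinatorial Laplacians $L_{G_n}=D_{G_n}-A_{G_n}$, together with a uniform-integrability estimate that tames the logarithmic singularity at $0$. By Kirchhoff's Matrix--Tree theorem, if $G$ is connected with Laplacian eigenvalues $0=\lambda_1<\lambda_2\le\dots\le\lambda_{v(G)}$, then $\tau(G)=\tfrac1{v(G)}\prod_{i\ge2}\lambda_i$, hence
\[
\frac{\ln\tau(G)}{v(G)}=\int_{(0,\infty)}\ln x\,d\rho_G(x)-\frac{\ln v(G)}{v(G)},
\qquad
\rho_G:=\frac1{v(G)}\sum_{i}\delta_{\lambda_i}.
\]
The second term tends to $0$, so it suffices to show that $\int_{(0,\infty)}\ln x\,d\rho_{G_n}(x)$ converges.

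First I would record that Benjamini--Schramm convergence of $(G_n)$ forces weak convergence $\rho_{G_n}\to\rho$ for some probability measure $\rho$ on $[0,2D]$, where $D$ bounds the degrees. Indeed the $k$-th moment $\int x^k\,d\rho_{G_n}=\tfrac1{v(G_n)}\operatorname{tr}L_{G_n}^k$ is the average over vertices $o$ of $(L_{G_n}^k)_{oo}$, a quantity that depends only on the $k$-ball around $o$ and is bounded by $(2D)^k$; thus it is a fixed finite linear combination of the numbers $\mathbb{P}(G_n,\alpha,k)$, each of which converges by hypothesis. Since all $\rho_{G_n}$ are supported in the fixed compact interval $[0,2D]$, convergence of all moments is equivalent to weak convergence, say to $\rho$. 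Consequently, for every $\varepsilon>0$ that is not an atom of $\rho$, the function $\ln$ being bounded and continuous on $[\varepsilon,2D]$ gives $\int_{[\varepsilon,2D]}\ln x\,d\rho_{G_n}\to\int_{[\varepsilon,2D]}\ln x\,d\rho$.

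The heart of the matter is therefore a bound, uniform in $n$, on how much $\rho_{G_n}$-mass sits near $0$; concretely, I would prove
\[
\sup_n\ \rho_{G_n}\bigl((0,\varepsilon]\bigr)\ \le\ C(D)\sqrt{\varepsilon},
\]
the rate $\sqrt{\varepsilon}$ being the correct one, attained for path-like graphs converging to $\mathbb{Z}$. By the layer-cake identity $\int_{(0,\varepsilon)}|\ln x|\,d\rho_{G_n}=|\ln\varepsilon|\,\rho_{G_n}((0,\varepsilon))+\int_0^\varepsilon t^{-1}\rho_{G_n}((0,t))\,dt$, this yields $\sup_n\int_{(0,\varepsilon)}|\ln x|\,d\rho_{G_n}\to0$ as $\varepsilon\downarrow0$, i.e. uniform integrability of $\ln$ at the origin. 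To get the displayed bound I would invoke a graph Faber--Krahn-type inequality for bounded-degree graphs: a function with Laplacian Rayleigh quotient $\langle L f,f\rangle/\langle f,f\rangle<\varepsilon$ must be spread over $\gg_D\varepsilon^{-1/2}$ vertices (the extremal shape being a path, where the bottom Dirichlet eigenvalue on $m$ vertices is $\asymp m^{-2}$), and then run a disjointification/covering argument on the supports of an orthonormal family of eigenfunctions of $L_{G_n}$ with eigenvalue $\le\varepsilon$, bounding their number by $C(D)\,v(G_n)\sqrt{\varepsilon}$. This localization step is the one I expect to be the main obstacle; everything else is soft.

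Finally I would assemble the pieces. For fixed $\varepsilon$ (not an atom of $\rho$) write $\ln x=\max(\ln x,\ln\varepsilon)-\bigl(\max(\ln x,\ln\varepsilon)-\ln x\bigr)$: the first summand is bounded and continuous, so its $\rho_{G_n}$-integral converges by weak convergence, while the second is supported on $(0,\varepsilon)$ and has integral at most the uniform bound of the previous paragraph. Letting $\varepsilon\downarrow0$ shows that $\int_{(0,\infty)}\ln x\,d\rho_{G_n}(x)$ is Cauchy and converges to $\int_{(0,\infty)}\ln x\,d\rho(x)$, a finite number by the same uniform estimate. Adding back the negligible term $-\ln v(G_n)/v(G_n)$, we conclude that $\ln\tau(G_n)/v(G_n)$ converges; moreover its limit equals $\int_{(0,\infty)}\ln x\,d\rho(x)$, which depends only on the Benjamini--Schramm limit of $(G_n)$ and can be recast as a Fuglede--Kadison-type determinant of the Laplacian on the limiting unimodular random rooted graph.
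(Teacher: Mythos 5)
The paper does not prove this theorem; it is quoted as background and attributed to Lyons \cite{lyons}, so there is no in-paper argument to compare against. Your outline does follow Lyons' actual strategy in spirit: pass from $\ln\tau(G)$ to $\int\ln x\,\mathrm d\rho_G$ via the Matrix--Tree theorem, get weak convergence of the Laplacian spectral measures from locality of the moments $\tfrac1{v}\operatorname{tr}L^k$, and then argue uniform integrability of $\ln x$ near $0$. You have also correctly identified the quantitative density-of-states bound $\sup_n\rho_{G_n}\bigl((0,\varepsilon]\bigr)\le C(D)\sqrt\varepsilon$ as the crux: the cheap bound coming from integrality of $v(G)\tau(G)$ (namely $\rho_G((0,\varepsilon])\le\ln(2D)/\ln(1/\varepsilon)$, in the spirit of Lemma~\ref{ize}) is genuinely too weak here, since $1/(x\ln(1/x))$ is not integrable at $0^+$, so the L\"uck-type argument that works elsewhere in the paper does not close this case.

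However, your sketched proof of the key bound has a real gap. The Dirichlet/Faber--Krahn observation that a nonzero function \emph{vanishing outside} a set of size $m$ has Rayleigh quotient $\gtrsim_D m^{-2}$ is fine, but it says nothing about eigenfunctions, which generically have full support and certainly need not be even approximately disjointly supported; and knowing that each individual function in a subspace $W$ with small Rayleigh quotient is "spread out" does not bound $\dim W$. There is no straightforward "disjointification/covering" of eigenfunction supports, and the min--max theorem gives the inequality in the wrong direction for this. The correct route (and, as far as I can tell, Lyons') is analytic rather than combinatorial: one proves a uniform on-diagonal heat-kernel (equivalently, random-walk return-probability) bound of the form $(e^{-tL})_{oo}\le 1/v(G)+C(D)/\sqrt t$, valid for connected bounded-degree graphs, via a Nash-type inequality or the Varopoulos--Carne/Coulhon estimates; taking the trace and setting $t=1/\varepsilon$ then yields $\#\{i:0<\lambda_i\le\varepsilon\}\le e\bigl(\operatorname{tr}e^{-L/\varepsilon}-1\bigr)\le C(D)v(G)\sqrt\varepsilon$. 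Once that lemma is in place, the rest of your argument (truncation, weak convergence, assembly, identification of the limit with a Fuglede--Kadison determinant of the limiting unimodular random rooted graph) is sound and is essentially the same assembly Lyons performs.
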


Our first result shows that a similar statement is true for the number of
matchings.

\begin{Th}
\label{entropy-intro} The \emph{matching entropy per site}
\[
\frac{\ln\mathbb{M}(G)}{v(G)}%
\]
is estimable.
\end{Th}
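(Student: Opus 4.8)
The plan is to express $\ln \mathbb{M}(G)$ as an integral against the matching measure and then to exploit continuity of that measure under Benjamini--Schramm convergence. Recall that the \emph{matching polynomial} of $G$ is
\[
M(G,x)=\sum_{k\ge 0}(-1)^k m_k(G)x^{v(G)-2k},
\]
where $m_k(G)$ is the number of matchings of size $k$; note $\mathbb{M}(G)=\sum_k m_k(G)$. A classical theorem of Heilmann and Lieb says that for a graph of maximum degree $\le d$, all roots of $M(G,x)$ are real and lie in the interval $[-2\sqrt{d-1},2\sqrt{d-1}]$. Hence the \emph{matching measure} $\rho_G$, the uniform distribution on the $v(G)$ roots of $M(G,x)$ (with multiplicity), is a probability measure supported on a fixed compact interval for a sparse sequence. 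The key identity is that $m_k(G)$ is, up to sign, an elementary symmetric function of the squares of the roots, so that $\mathbb{M}(G)=\sum_k m_k(G)$ can be written as $\prod_{\lambda}$ over the roots of a suitable linear expression, and taking logarithms,
\[
\frac{\ln \mathbb{M}(G)}{v(G)}=\int \ln\!\bigl(1+\text{(something in }\lambda^2)\bigr)\,d\rho_G(\lambda)
\]
— concretely, using that $\sum_{k} m_k(G)\,t^{2k}$ has roots $\pm i/\mu$ where $\mu$ runs over the roots of $M(G,x)$ on the imaginary axis after the substitution, one gets $\mathbb{M}(G)=\prod_\lambda \sqrt{1+\lambda_?^2}$; in any case the upshot is a clean formula $\frac{\ln\mathbb{M}(G)}{v(G)}=\int f\,d\rho_G$ for an explicit continuous $f$ on the support interval.

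Second, I would show that the matching measure $\rho_G$ is itself determined asymptotically by the Benjamini--Schramm limit, i.e.\ that for any fixed $j$ the moment $\int \lambda^j\,d\rho_G(\lambda)=\frac{1}{v(G)}\sum_\lambda \lambda^j$ is an estimable parameter. This is the standard ``local statistics determine spectral moments'' argument: by Newton's identities it suffices to control the power sums of the roots, and the $j$-th power sum of the roots of $M(G,x)$ equals a signed count of closed ``tree-like'' walks of length $j$ in $G$ (this is the combinatorial interpretation of the coefficients of the matching polynomial, analogous to, but simpler than, the spectral case, since the matching polynomial only sees forests). Such a count is a local quantity: each closed walk of length $j$ from a vertex $v$ is contained in the $j$-ball around $v$, so $\frac{1}{v(G)}\sum_\lambda\lambda^j$ is a bounded-range average of a bounded function of the $j$-ball type, hence converges along any Benjamini--Schramm convergent sparse sequence. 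Therefore all moments of $\rho_G$ converge, and since the $\rho_G$ are supported in a fixed compact interval, $\rho_{G_n}$ converges weakly to some probability measure $\rho$.

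Third, I would combine the two: $\frac{\ln\mathbb{M}(G_n)}{v(G_n)}=\int f\,d\rho_{G_n}\to \int f\,d\rho$ by weak convergence, since $f$ is continuous on the compact support. This gives estimability. A small technical point to dispatch: $f$ must be continuous (indeed real-analytic) on a neighborhood of $[-2\sqrt{d-1},2\sqrt{d-1}]$ so that weak convergence applies cleanly; this is where the Heilmann--Lieb root bound is essential, as it keeps everything away from any singularity of $f$ and makes $f$ bounded along the whole sequence.

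I expect the main obstacle to be step one, namely pinning down the exact integral representation of $\ln\mathbb{M}(G)$ against $\rho_G$. The naive generating-function manipulation of $\sum_k m_k(G)$ must be massaged into a genuine product over the roots of the matching polynomial (or of a closely related polynomial obtained from it by a change of variable), and one has to check that the resulting integrand is the \emph{same} function $f$ for every graph, independent of $v(G)$, so that only the measure varies. Once the formula $\frac{\ln\mathbb{M}(G)}{v(G)}=\int f\,d\rho_G$ is established with a universal continuous $f$, the remaining steps (moment convergence via local walk counts, then weak convergence) are routine, with Heilmann--Lieb doing the work of confining the spectrum. As a sanity check, the same scheme should reprove Lyons's tree-entropy theorem with the matching measure replaced by the spectral measure of the graph and $f(\lambda)$ related to $\ln$ of an eigenvalue shift — the structural parallel is a good indication the argument is on the right track.
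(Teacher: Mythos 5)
Your proposal is correct and follows essentially the same path as the paper: write $\mathbb{M}(G)=|\mu(G,\sqrt{-1})|=\prod_\lambda\sqrt{1+\lambda^2}$, so that $\frac{\ln\mathbb M(G)}{v(G)}=\frac12\int\ln(1+x^2)\,\mathrm d\rho_G(x)$ with the integrand a fixed continuous function on the Heilmann--Lieb interval $[-2\sqrt{d-1},2\sqrt{d-1}]$, then prove weak convergence of $\rho_{G_n}$ and pass to the limit. The one place you diverge is in how you get weak convergence: you argue via moments, using that the $j$-th power sum of the matching-polynomial roots counts closed tree-like walks of length $j$, hence is a bounded local statistic. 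The paper's primary route instead invokes a general theorem from Csikv\'ari--Frenkel \cite{csi} (Theorem 4.6(a) there) about monic, multiplicative graph polynomials whose coefficients are finite linear combinations of subgraph counts; however, the paper explicitly records your moment/tree-like-walk argument as an equally valid alternative in a remark immediately after Theorem~\ref{wc}. So there is no gap; your ``main obstacle'' (pinning down the universal integrand $f$) in fact dissolves once you evaluate $\mu(G,x)$ at $x=\sqrt{-1}$, exactly as the paper does.
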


This will be proved as a part of Theorem~\ref{wc}.
 For the proof, we apply the machinery developed by the first three authors and T. Hubai in the papers \cite{abert_hubai,csi}. In particular, results in \cite{csi} show that if
$f(G,x)$ is a graph polynomial satisfying certain conditions and $\rho_{G}$ is
the uniform distribution on the roots of $f(G,x)$, then for every fixed $k$,
the graph parameter
\[
\int z^{k}\mathrm{d}\rho_{G}%
\]
is estimable.

When considering the matching polynomial as $f(G,x)$, we get the definition of the {\it matching measure} and that the matching measure weakly converges for Benjamini--Schramm convergent sequences of graphs.
This leads to Theorem \ref{entropy-intro}.
Note that a modification of  the algorithm `CountMATCHINGS' in \cite{BGK+} yields an alternative proof of Theorem~\ref{entropy-intro}.

Considering the \emph{independence
polynomial} as $f(G,x)$, however, also yields an extension of the following theorem of
H.\ Nguyen and K.\ Onak \cite{ngu} (independently proved by G.\ Elek and
G.\ Lippner \cite{ele}).

\begin{Th}
[Nguyen--Onak and Elek--Lippner]\label{NOEL} The
\emph{matching ratio} ${\nu(G)}/{v(G)}$ is estimable.
\end{Th}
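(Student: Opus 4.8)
The plan is to deduce the Nguyen--Onak/Elek--Lippner theorem from the estimability of the matching measure, following the same spectral-moment machinery that yields Theorem~\ref{entropy-intro}. The starting point is the classical fact (Heilmann--Lieb) that the matching polynomial $\mu(G,x)=\sum_k (-1)^k m_k(G) x^{v(G)-2k}$ has only real roots, symmetric about the origin, and that its largest root has absolute value bounded by $2\sqrt{D-1}$, where $D$ is the degree bound of the sparse class under consideration. Thus the matching measure $\rho_G$ is a probability measure supported on a fixed compact interval $[-2\sqrt{D-1},\,2\sqrt{D-1}]$, and by the results of \cite{csi} quoted above, all its moments $\int z^k\,\mathrm d\rho_G$ are estimable; since the support is uniformly bounded, weak convergence of $\rho_{G_n}$ follows along any Benjamini--Schramm convergent sequence.

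Next I would express the matching ratio as a functional of the matching measure. The key identity is that $v(G)-2\nu(G)$ equals the multiplicity of $0$ as a root of $\mu(G,x)$ — this is a theorem going back to the theory of the matching polynomial (e.g. via the path-tree / Gallai--Edmonds correspondence, or directly: the smallest power of $x$ dividing $\mu(G,x)$ is $x^{v(G)-2\nu(G)}$, since $m_{\nu(G)}(G)\neq 0$ is the leading coefficient of the lowest-degree term). Therefore
\[
\frac{\nu(G)}{v(G)}=\frac12\left(1-\rho_G(\{0\})\right),
\]
so estimability of $\nu(G)/v(G)$ is equivalent to estimability of the atom of the matching measure at $0$. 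Weak convergence of measures does \emph{not} by itself control the mass of a single point, so this is exactly where the argument needs to do real work.

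The way around this is to use one extra input beyond weak convergence: the matching measure of a Benjamini--Schramm limit has \emph{no atoms outside $0$}, and more usefully, one can bound $\rho_G(\{0\})$ between a Riesz-type average of $\log$ of the characteristic polynomial evaluated near $0$ and a quantity that is continuous in the weak topology. Concretely I would pick a small $\varepsilon>0$, write $\rho_G([-\varepsilon,\varepsilon]) = \rho_G(\{0\}) + \rho_G([-\varepsilon,\varepsilon]\setminus\{0\})$, and control the second term uniformly: for graphs in a fixed sparse class, the number of roots of $\mu(G,x)$ in $(0,\varepsilon]$ divided by $v(G)$ tends to $0$ as $\varepsilon\to 0$, uniformly in $G$, because the roots in that range contribute boundedly to a fixed negative moment / logarithmic integral that is itself estimable. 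Hence $\rho_G([-\varepsilon,\varepsilon])$ is, up to an error that is $o(1)$ as $\varepsilon\to0$ uniformly in $G$, equal to $\rho_G(\{0\})$; and for each fixed $\varepsilon$ that is not an atom of the limit measure, $\rho_{G_n}([-\varepsilon,\varepsilon])$ converges by the portmanteau theorem. A standard $\varepsilon$-diagonal argument then shows $\rho_{G_n}(\{0\})$ converges, giving estimability of $\nu(G)/v(G)$.

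The main obstacle, as indicated, is precisely the passage from weak convergence of $\rho_{G_n}$ to convergence of the atom at $0$: one must show the eigenvalues/roots of the matching polynomial do not accumulate at $0$ at a positive rate other than the genuine atom, uniformly over the sparse class. I expect to handle this via a quantitative estimate on the integrated density of states of $\mu(G,x)$ near the origin — e.g. bounding $\#\{i: 0<|\theta_i(G)|\le\varepsilon\}/v(G)$ by something like $C/\log(1/\varepsilon)$ using the fact that $\int \log|z|\,\mathrm d\rho_G$ is bounded below (the roots of $\mu(G,x)$ being interlaced with those of matching polynomials of subgraphs, or using Heilmann--Lieb's location of roots together with the combinatorial interpretation of the constant term $m_{\nu(G)}(G)\ge 1$). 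Once that uniform near-zero estimate is in hand, everything else is soft topology. Finally, to obtain the stronger claim that the independence ratio $\alpha(G)/v(G)$ is estimable for claw-free graphs, I would invoke the theorem of Chudnovsky--Seymour that the independence polynomial of a claw-free graph has only real roots, so the same machinery applies verbatim with the independence polynomial in place of the matching polynomial, and note that for line graphs this specializes back to Theorem~\ref{NOEL}.
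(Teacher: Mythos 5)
Your approach is essentially the same as the paper's: express the matching ratio as $\tfrac12\bigl(1-\rho_G(\{0\})\bigr)$, get weak convergence of $\rho_{G_n}$ from the moment machinery of \cite{csi} plus Heilmann--Lieb, and then upgrade weak convergence to convergence of the atom at $0$ by a uniform (in $G$ of bounded degree) estimate showing $\rho_G(0<|x|\le\varepsilon)\to 0$ as $\varepsilon\to 0$. The paper implements that last step somewhat indirectly: via Remark~\ref{trans} it transports the question to the independence measure $\sigma_{L(G)}$ of the line graph, and applies Lemma~\ref{ize}, which rests on Dobrushin's lower bound $\beta$ for the roots of $I(L(G),\cdot)$ together with the fact that the product of those roots has modulus at most $1$. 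Your proposal works directly with $\mu(G,x)$, which is in fact a bit cleaner: write $\mu(G,x)=x^{v(G)-2\nu(G)}q(x)$ with $q$ monic and $|q(0)|=m_{\nu(G)}(G)\ge 1$; since every root of $q$ has modulus at most $\omega=2\sqrt{D-1}$, if $N_\varepsilon$ roots lie in $(0,\varepsilon]$ in modulus then $1\le\varepsilon^{N_\varepsilon}\omega^{v(G)}$, giving $\rho_G(0<|x|\le\varepsilon)\le\ln\omega/\ln(1/\varepsilon)$, exactly the kind of $C/\ln(1/\varepsilon)$ bound you wanted. So the route you sketch in your final parenthetical is correct and matches the paper's mechanism.

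One justification in the middle of your proposal is, however, not correct as stated: $\int\ln|z|\,\mathrm d\rho_G$ is \emph{not} bounded below over the class of bounded-degree graphs. By Proposition~\ref{matchpar}(b) this integral equals $\ln\operatorname{pm}(G)/v(G)$, which is $-\infty$ whenever $G$ has no perfect matching, i.e.\ whenever the atom at $0$ is present -- precisely the case of interest. The quantity that actually is uniformly bounded (from below by $0$) is $\sum_{\theta_i\ne 0}\ln|\theta_i|=\ln m_{\nu(G)}(G)$, the logarithm of the product of the \emph{nonzero} roots, which is what the $m_{\nu(G)}(G)\ge 1$ argument above uses. With that correction your proof goes through, and (as you note) the same scheme applied to the Chudnovsky--Seymour real-rootedness of the independence polynomial of claw-free graphs gives Theorem~\ref{claw--free}, which is the other route the paper offers for Theorem~\ref{NOEL}.
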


A graph is \emph{claw-free} if it does not contain the complete bipartite
graph $K_{1,3}$ as an induced subgraph. Our extension is the following.

\begin{Th}
\label{claw--free} The \emph{independence ratio} $\alpha(G)/v(G)$ is estimable
in the class of claw-free graphs.
\end{Th}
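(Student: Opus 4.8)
The plan is to feed the independence polynomial into the analytic machinery, just as Theorem~\ref{entropy-intro} feeds it the matching polynomial. Write $I(G,x)=\sum_{k\ge 0}i_k(G)x^k$, where $i_k(G)$ is the number of independent sets of size $k$, so that $\deg I(G,\cdot)=\alpha(G)$. Two structural facts single out the claw-free class. First, the theorem of Chudnovsky and Seymour: for claw-free $G$ all roots of $I(G,x)$ are real, hence negative --- the exact analogue, for claw-free graphs, of Heilmann--Lieb real-rootedness of the matching polynomial. Second, a bound of Shearer on the zero-free interval of the independence polynomial: there is a constant $c_D>0$ depending only on the maximum degree $D$ such that every root of $I(G,x)$ has absolute value at least $c_D$ whenever $G$ has maximum degree at most $D$ (one may take $c_D=(D-1)^{D-1}/D^D$). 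Set $\bar I(G,x):=x^{v(G)}I(G,-1/x)=\sum_k(-1)^k i_k(G)x^{v(G)-k}$; this monic polynomial of degree $v(G)$ has $0$ as a root of multiplicity $v(G)-\alpha(G)$, while its remaining $\alpha(G)$ roots, being the reciprocals of the negated roots of $I(G,x)$, all lie in $[0,1/c_D]$. Let $\rho_G$ be the uniform probability measure on the $v(G)$ roots of $\bar I(G,x)$, counted with multiplicity; this is the \emph{independence measure}. The bookkeeping identity to exploit is
\[
\frac{\alpha(G)}{v(G)}=1-\rho_G(\{0\})=\lim_{\varepsilon\to 0^+}\rho_G((\varepsilon,\infty)).
\]

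I would then invoke the transfer theorem of \cite{csi}: after checking its hypotheses for the independence polynomial --- multiplicativity over disjoint unions and the uniform root bound above being the essential points --- it yields that $\int z^k\,\mathrm{d}\rho_G$ is estimable for every fixed $k$. Because the measures $\rho_{G_n}$ are all supported in the fixed compact set $[0,1/c_D]$, convergence of all moments upgrades to weak convergence $\rho_{G_n}\to\rho$ along every Benjamini--Schramm convergent sparse sequence of claw-free graphs. The obstacle is that weak convergence controls the atom at $0$ from one side only: the portmanteau theorem gives $\limsup_n\rho_{G_n}(\{0\})\le\rho(\{0\})$, which does not suffice to conclude that $\alpha(G_n)/v(G_n)=1-\rho_{G_n}(\{0\})$ converges.

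The heart of the argument --- and the step I expect to be the main obstacle --- is a uniform anticoncentration estimate saying that nonzero roots of $\bar I(G,x)$ cannot accumulate at the origin: there is a function $\eta_D(\varepsilon)\to 0$ as $\varepsilon\to 0^+$, depending only on $D$, with $\rho_G((0,\varepsilon))\le\eta_D(\varepsilon)$ for every claw-free $G$ of maximum degree at most $D$. To prove this, let $-s_1,\dots,-s_{\alpha(G)}$ be the roots of $I(G,x)$, so $s_j\ge c_D$ for all $j$, while $\prod_j s_j=1/i_{\alpha(G)}(G)\le 1$ since $I(G,0)=1$ and $i_{\alpha(G)}(G)\ge 1$ counts the maximum independent sets. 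If $m$ of the $s_j$ exceed $1/\varepsilon$ and $\varepsilon<1$, then, using that the $\alpha(G)-m$ remaining factors are each at least $c_D$,
\[
\varepsilon^{-m}<\prod_{s_j>1/\varepsilon}s_j\le c_D^{m-\alpha(G)}\prod_j s_j\le c_D^{m-\alpha(G)};
\]
taking logarithms gives $m\log(1/\varepsilon)<(\alpha(G)-m)\log(1/c_D)$, hence $m<\alpha(G)\log(1/c_D)/(\log(1/\varepsilon)+\log(1/c_D))$. Since the nonzero roots of $\bar I(G,x)$ lying in $(0,\varepsilon)$ are precisely the numbers $1/s_j$ with $s_j>1/\varepsilon$, dividing by $v(G)\ge\alpha(G)$ gives $\rho_G((0,\varepsilon))<\log(1/c_D)/(\log(1/\varepsilon)+\log(1/c_D))=:\eta_D(\varepsilon)$. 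Both real-rootedness and the degree bound are genuinely used here.

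To conclude, fix $\varepsilon>0$ outside the (countable) set of atoms of $\rho$, with $\varepsilon<1/2$. The portmanteau theorem gives $\rho_{G_n}((\varepsilon,\infty))\to\rho((\varepsilon,\infty))$, while for every $n$,
\[
\rho_{G_n}((\varepsilon,\infty))\le\frac{\alpha(G_n)}{v(G_n)}=\rho_{G_n}((\varepsilon,\infty))+\rho_{G_n}((0,\varepsilon])\le\rho_{G_n}((\varepsilon,\infty))+\eta_D(2\varepsilon),
\]
where the last inequality uses $(0,\varepsilon]\subseteq(0,2\varepsilon)$. Therefore $\rho((\varepsilon,\infty))\le\liminf_n\alpha(G_n)/v(G_n)\le\limsup_n\alpha(G_n)/v(G_n)\le\rho((\varepsilon,\infty))+\eta_D(2\varepsilon)$. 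Letting $\varepsilon\to 0^+$ through non-atoms of $\rho$, both $\rho((\varepsilon,\infty))\to 1-\rho(\{0\})$ and $\eta_D(2\varepsilon)\to 0$, so $\alpha(G_n)/v(G_n)\to 1-\rho(\{0\})$; this is the asserted estimability. Running the same scheme with the matching polynomial and Heilmann--Lieb in place of $I$ and Chudnovsky--Seymour, and using $\nu(G)/v(G)=\tfrac12(1-\rho_G(\{0\}))$ for the matching measure, simultaneously reproves Theorem~\ref{NOEL}.
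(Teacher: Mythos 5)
Your proposal follows essentially the same route as the paper's proof of Theorem~\ref{wcind}: transform the independence polynomial to a monic polynomial whose roots lie in a fixed compact interval (you use $x^{v(G)}I(G,-1/x)$, the paper uses $x^{v(G)}I(G,1/x)$), invoke the transfer machinery of~\cite{csi} together with Chudnovsky--Seymour real-rootedness to obtain weak convergence of the root measures, and then control the mass near the origin via the product-of-roots bound (your anticoncentration estimate $\eta_D(\varepsilon)$ is exactly the content of the paper's Lemma~\ref{ize} after the reciprocal change of variable, with Shearer's constant in place of the Dobrushin bound~$\beta$). The only differences are cosmetic: a sign convention, a choice of root-separation constant, and whether the final step is phrased via portmanteau on the probability measure $\rho_{G_n}$ or via extending weak convergence of the sub-probability measure $\sigma_n$ to all bounded continuous test functions.
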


This will be proved as a part of Theorem~\ref{wcind}. By the following
argument, Theorem~\ref{claw--free} indeed extends Theorem~\ref{NOEL}. The
\emph{line graph} $L(G)$ of the graph $G$ has vertex set $V(L(G))=E(G)$, and
$e,f\in E(G)$ are adjacent in $L(G)$ if they share an endpoint in $G$.
Trivially $\nu(G)=\alpha(L(G))$, so we have
\[
\frac{\nu(G)}{v(G)}=\frac{\nu(G)}{e(G)}\cdot\frac{e(G)}{v(G)}=\frac
{\alpha(L(G))}{v(L(G))}\cdot\frac{e(G)}{v(G)}%
\]
that is, the matching ratio of $G$ equals the independence ratio of $L(G)$
times the edge density of $G$. The edge density is clearly estimable. Using
that line graphs are claw-free, and that $(L(G_{n}))$ is Benjamini--Schramm
convergent if $(G_{n})$ is, Theorem \ref{claw--free} implies Theorem
\ref{NOEL}.

Note that the independence ratio is not estimable in general. Indeed, random
$d$-regular graphs and random $d$-regular \emph{bipartite} graphs converge to
the same object, the $d$-regular tree, but by a result of B. Bollob\'as \cite{bollobas},
the independence ratio of a sequence of random $d$-regular graphs is bounded away
from $1/2$ a.s.

\subsection{Matchings and perfect matchings in graphs with essentially large
girth}

The girth $g(G)$ of the graph $G$ is defined to be the length of the shortest
cycle in $G$. If $(G_{n})$ is a sequence of  $d$--regular graphs with
$g(G_{n})\rightarrow\infty$, then $(G_{n})$ Benjamini--Schramm converges,
since every $r$-ball of $G_{n}$ will be isomorphic to the $r$-ball of the
$d$-regular tree for large enough $n$. More generally, we say that $(G_{n})$
is of \emph{essentially large girth }(or converges to the $d$-regular
tree), if for any fixed $k$, the number of $k$--cycles in $G_{n}$ is
$o(v(G_{n}))$ as $n\rightarrow\infty$. Important examples are sequences of
random $d$--regular graphs and bipartite graphs.

The following theorem is the main result of this paper.

\begin{Th}
\label{entropy-ggirth} Let $(G_{n})$ be a sequence of $d$--regular graphs with
essentially large girth. Then the following hold.

\begin{itemize}
\item[(a)] We have
\[
\lim_{n\rightarrow\infty}\frac{\ln\mathbb{M}(G_{n})}{v(G_{n})}=\frac{1}{2}\ln
S_{d},
\]
where
\[
S_{d}=\frac{1}{\xi^{2}}\left(  \frac{d-1}{d-\xi}\right)  ^{d-2},\qquad
\xi=\frac{2}{1+\sqrt{4d-3}}.
\]
In particular, $S_{3}=16/5$.

\item[(b)] For the number of perfect matchings $\operatorname{pm}(G_{n})$, we
have
\[
\limsup_{n\to\infty}\frac{\ln\operatorname{pm}(G_{n})}{v(G_{n})}\leq\frac
{1}{2}\ln\left(  \frac{(d-1)^{d-1}}{d^{d-2}}\right) .
\]

\item[(c)] If, in addition, the graphs $(G_{n})$ are bipartite, then
\[
\lim_{n\to\infty}\frac{\ln\operatorname{pm}(G_{n})}{v(G_{n})}=\frac{1}{2}%
\ln\left(  \frac{(d-1)^{d-1}}{d^{d-2}}\right) .
\]

\end{itemize}
\end{Th}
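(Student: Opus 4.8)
The plan is to reduce everything to two integrals of explicit functions against the matching measure. Write the matching polynomial as $\mu(G,x)=\sum_{k\ge0}(-1)^km_k(G)x^{v(G)-2k}$, where $m_k(G)$ is the number of $k$-matchings, so $\mathbb M(G)=\sum_k m_k(G)$ and $\Pm(G)=m_{v(G)/2}(G)$. Let $n=v(G)$, let $\lambda_1,\dots,\lambda_n$ be the roots of $\mu(G,\cdot)$ — all real and contained in $[-2\sqrt{d-1},2\sqrt{d-1}]$ by the Heilmann--Lieb theorem when $G$ is $d$-regular — and let $\rho_G$ be the matching measure, the uniform probability measure on these roots. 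The starting point is the pair of elementary identities
\[
\mathbb M(G)=i^{-n}\mu(G,i)=\prod_{j=1}^{n}\sqrt{1+\lambda_j^{2}},
\qquad
\Pm(G)=|\mu(G,0)|=\prod_{j=1}^{n}|\lambda_j|,
\]
the first because $\mu(G,i)=i^{n}\sum_k m_k(G)=i^{n}\mathbb M(G)$, the second because the only surviving term of $\mu(G,0)$ is $(-1)^{n/2}m_{n/2}(G)$. Hence
\[
\frac{\ln\mathbb M(G)}{v(G)}=\frac12\int\ln(1+x^{2})\,d\rho_G(x),
\qquad
\frac{\ln\Pm(G)}{v(G)}=\int\ln|x|\,d\rho_G(x),
\]
the right-hand side of the second being $-\infty$ exactly when $G$ has no perfect matching. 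The whole theorem is about the behaviour of these two integrals.

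Next I would invoke the weak continuity of the matching measure along Benjamini--Schramm convergent sequences (the content of the machinery of \cite{csi} recalled above, reproved here in Theorem~\ref{wc}) and identify the limit in our case: when $(G_n)$ is $d$-regular of essentially large girth, each moment $\int x^{k}\,d\rho_{G_n}=\frac1{v(G_n)}\sum_j\lambda_j^{k}$ is, up to the normalization, a count of closed tree-like walks of length $k$ (Godsil), hence is determined by arbitrarily large but fixed neighbourhoods and converges to the $k$-th moment of the $d$-regular tree $T_d$. Therefore $\rho_{G_n}$ converges weakly to the matching measure $\rho_d$ of $T_d$, the Kesten--McKay measure with density $\dfrac{d}{2\pi}\cdot\dfrac{\sqrt{4(d-1)-x^{2}}}{d^{2}-x^{2}}$ on $[-2\sqrt{d-1},2\sqrt{d-1}]$. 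Part (a) is then immediate, since $\ln(1+x^{2})$ is bounded and continuous on this common compact support:
\[
\lim_{n\to\infty}\frac{\ln\mathbb M(G_n)}{v(G_n)}=\tfrac12\int\ln(1+x^{2})\,d\rho_d(x).
\]
For part (b), $\ln|x|$ is bounded above and upper semicontinuous on $[-2\sqrt{d-1},2\sqrt{d-1}]$, so the upper half of the portmanteau theorem gives only the one-sided statement
\[
\limsup_{n\to\infty}\frac{\ln\Pm(G_n)}{v(G_n)}=\limsup_{n\to\infty}\int\ln|x|\,d\rho_{G_n}(x)\le\int\ln|x|\,d\rho_d(x).
\]

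It remains to compute the two integrals against $\rho_d$, which I would do uniformly. Put $F_d(s)=\int\ln(s+x^{2})\,d\rho_d(x)$ for $s\ge0$; the quantities wanted are $F_d(1)$ and $\tfrac12F_d(0)$. Then $F_d'(s)=\int\frac{d\rho_d(x)}{s+x^{2}}$, which is expressed through the Stieltjes transform of the Kesten--McKay measure,
\[
\int\frac{d\rho_d(x)}{z-x}=\frac{2(d-1)}{(d-2)z+d\sqrt{z^{2}-4(d-1)}},
\]
evaluated at $z=i\sqrt s$; this makes $F_d'$ an elementary function of $\sqrt s$, which one integrates in closed form and pins down by the normalization $F_d(s)=\ln s+o(1)$ as $s\to\infty$. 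Substituting $s=1$ produces $\sqrt{1+4(d-1)}=\sqrt{4d-3}$ and hence $\xi=2/(1+\sqrt{4d-3})$ — equivalently the positive root of $(d-1)\xi^{2}+\xi=1$ — and after a short simplification $F_d(1)=\ln S_d$; substituting $s=0$ produces $\sqrt{4(d-1)}$ and gives $F_d(0)=(d-1)\ln(d-1)-(d-2)\ln d=\ln\!\big((d-1)^{d-1}/d^{d-2}\big)$. This finishes (a) and (b).

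For part (c) the matching lower bound is supplied externally by Schrijver's theorem: every $d$-regular bipartite graph on $v$ vertices has at least $\big((d-1)^{d-1}/d^{d-2}\big)^{v/2}$ perfect matchings, whence $\liminf_n \ln\Pm(G_n)/v(G_n)\ge\tfrac12\ln\big((d-1)^{d-1}/d^{d-2}\big)$; combined with the upper bound of (b) this is the asserted equality. The conceptual obstacle — and the reason (b) is only an inequality — is precisely the asymmetry of the two integrands: $\ln(1+x^{2})$ is continuous, so weak convergence transfers it to the limit, whereas $\ln|x|\to-\infty$ at $0$ and is only upper semicontinuous, so weak convergence yields only a one-sided bound; recovering the matching lower bound for perfect matchings in the bipartite case genuinely needs Schrijver's inequality, in line with the failure of estimability of $\ln\Pm(G)/v(G)$ in general noted earlier. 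The remaining real work is the honest evaluation of the Stieltjes-transform integral, which is where the closed forms $S_d$, $(d-1)^{d-1}/d^{d-2}$ and the algebraic number $\xi$ actually come from.
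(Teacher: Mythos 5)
Your proof follows essentially the same route as the paper: express the matching and perfect-matching entropies as integrals of $\tfrac12\ln(1+x^2)$ and $\ln|x|$ against the matching measure, use weak convergence of $\rho_{G_n}$ to the Kesten--McKay density, evaluate the resulting integrals in closed form, observe that $\ln|x|$ being only upper semicontinuous (unbounded below near $0$) yields merely a $\limsup$ bound, and close the gap in the bipartite case via Schrijver's theorem. The only variations are cosmetic: you obtain weak convergence from the tree-like-walk interpretation of the moments of $\rho_{G}$, which is precisely the alternative argument the paper sketches in the remark after Theorem~\ref{wc} (its main proof instead compares coefficients of $\phi$ and $\mu$ via Godsil's identity and invokes McKay's eigenvalue-distribution theorem, handling essentially large girth by interleaving an auxiliary genuine large-girth sequence), and you evaluate the integrals by differentiating $F_d(s)=\int\ln(s+x^2)\,d\rho_d$ and integrating the Stieltjes transform of $\rho_d$, rather than specializing McKay's 1983 closed form for $\int f_d(x)\ln(1-\gamma x)\,dx$ at $\gamma=\sqrt{-1}$ and taking the monotone limit $\gamma\to\infty\sqrt{-1}$ for the $\ln|x|$ case as the paper does.
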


Theorem \ref{entropy-ggirth} is related to the following famous result of A. Schrijver \cite{sch1}.

\begin{Th}
[Schrijver]\label{Schrijver} For any $d$--regular bipartite graph
$G$ on  $v(G)=2\cdot n$ vertices, we have
\[
\operatorname{pm}(G)\geq\left( \frac{(d-1)^{d-1}}{d^{d-2}}\right) ^{n}.
\]

\end{Th}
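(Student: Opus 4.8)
We deduce Schrijver's theorem from part (c) of Theorem~\ref{entropy-ggirth} by a $2$-lift argument. Recall that a $2$-lift of a graph $G$ is obtained from a signing $s\colon E(G)\to\{+1,-1\}$ by replacing each vertex $v$ by two vertices $v^0,v^1$ and each edge $uv$ by the two edges $u^0v^0,u^1v^1$ if $s(uv)=+1$, resp.\ $u^0v^1,u^1v^0$ if $s(uv)=-1$. A $2$-lift of a $d$--regular bipartite graph is again $d$--regular and bipartite, with twice as many vertices. The plan is: (i) show that passing to a $2$-lift never increases the perfect matching entropy per site, for bipartite graphs; (ii) build a tower of $2$-lifts above the given graph with essentially large girth; (iii) combine monotonicity with the exact limit from Theorem~\ref{entropy-ggirth}(c).

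\emph{Step 1: $\operatorname{pm}(\widetilde G)\le\operatorname{pm}(G)^2$ for every $2$-lift $\widetilde G$ of a bipartite graph $G$.} Let $\pi\colon\widetilde G\to G$ be the covering map. Given a perfect matching $\widetilde M$ of $\widetilde G$, each of the two vertices above any $v\in V(G)$ is covered exactly once, so the edge multiset $\pi(\widetilde M)$ covers every vertex of $G$ exactly twice; hence it is a vertex-disjoint union of (even) cycles and doubled edges, i.e.\ $M_1\cup M_2$ for an ordered pair of perfect matchings of $G$, the number of such pairs producing a fixed configuration $S$ with $c(S)$ genuine cycles being $2^{c(S)}$. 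Conversely, counting the perfect matchings of $\widetilde G$ that push forward to a fixed $S$ reduces, by disjointness, to a product over the components of $S$: a doubled edge lifts in a forced way, while over a cycle $C$ the preimage $\pi^{-1}(C)$ is two disjoint copies of $C$ when the $s$-product around $C$ is $+1$ — in which case exactly $2$ of its perfect matchings push forward onto $C$ edge-by-edge — and is a single $2|C|$-cycle when the $s$-product is $-1$, in which case none do. Therefore $\operatorname{pm}(\widetilde G)=\sum_{S}2^{c(S)}$ where the sum is over those $S$ all of whose cycles have $s$-product $+1$, and in particular
\[
\operatorname{pm}(\widetilde G)\ \le\ \sum_{S}2^{c(S)}\ =\ \operatorname{pm}(G)^2 ,
\]
the last sum being over all $S$. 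Since $v(\widetilde G)=2v(G)$, this says $\ln\operatorname{pm}(\widetilde G)/v(\widetilde G)\le\ln\operatorname{pm}(G)/v(G)$.

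\emph{Step 2: a tower with essentially large girth.} Starting from the given graph $G=G_0$, let $G_{k+1}$ be the $2$-lift of $G_k$ associated to a uniformly random signing, the choices for different $k$ being independent. Each $G_k$ is $d$--regular, bipartite, with $v(G_k)=2^k v(G_0)\to\infty$. An $\ell$-cycle of $G_{k+1}$ projects to a closed walk of length $\ell$ in $G_k$, and a first-moment computation bounds the expected number of $\ell$-cycles of $G_{k+1}$ in terms of the numbers of cycles of length $\le\ell$ in $G_k$ (backtracking closed walks never lift to cycles); iterating, for each fixed $\ell$ the number of $\ell$-cycles of $G_k$ grows subexponentially in $k$, hence is $o(2^k)=o(v(G_k))$. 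A Borel--Cantelli argument diagonalizing over $\ell$ then yields a single realization of the tower $(G_k)_{k\ge0}$ that has essentially large girth. (This is the standard fact that a tower of independent random $2$-lifts Benjamini--Schramm converges to the $d$--regular tree.)

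\emph{Step 3: conclusion, and the main obstacle.} By Step~1 the sequence $a_k:=\ln\operatorname{pm}(G_k)/v(G_k)$ is non-increasing, and by Theorem~\ref{entropy-ggirth}(c) applied to the bipartite, essentially-large-girth sequence $(G_k)$ we have $\lim_{k\to\infty}a_k=\tfrac12\ln\!\big((d-1)^{d-1}/d^{d-2}\big)$. Hence $a_0\ge\lim_k a_k$, which is exactly $\operatorname{pm}(G)\ge\big((d-1)^{d-1}/d^{d-2}\big)^{n}$ with $2n=v(G)$. The substantive point is Step~1: the clean behaviour of $\operatorname{pm}$ under $2$-lifts rests on the sign-product of a cycle living in $\mathbb{Z}/2$, and the push-forward must be analyzed component by component with care. (One could instead replace Steps~1--2 by Gurvits's stable-polynomial/capacity estimates, or give Schrijver's original direct counting proof; the route above is the one that dovetails with the machinery developed in this paper.)
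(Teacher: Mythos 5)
The paper does not prove Schrijver's theorem at all: it is quoted as a known external result, with references to Voorhoeve, Schrijver, Gurvits, and Laurent--Schrijver for its proof. So there is no ``paper's own proof'' to compare against; what matters here is whether your argument is a valid self-contained proof.

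It is not, because it is circular. Your Step~3 invokes Theorem~\ref{entropy-ggirth}(c), but the paper's proof of that part reads, verbatim, ``The claim follows from part (b) and Schrijver's theorem (Theorem~\ref{Schrijver}).'' Part (b) gives only the upper bound $\limsup_n \ln\Pm(G_n)/v(G_n)\le \tfrac12\ln\bigl((d-1)^{d-1}/d^{d-2}\bigr)$; the matching lower bound needed to upgrade this to a limit is exactly Schrijver's theorem applied to each $G_n$. Thus you are using Schrijver to prove Schrijver. Note that you cannot sidestep this by appealing only to the independently-proved part (b): combined with your monotonicity $a_{k+1}\le a_k$ from Step~1, part (b) yields only $a_0\ge\lim_k a_k$ together with $\lim_k a_k\le L$, which gives no lower bound on $a_0$ whatsoever.

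Your Step~1 is, on its own, a correct and rather pretty observation (the component-by-component analysis of push-forwards over $+1$ and $-1$ cycles is right, and $\Pm(\widetilde G)\le\Pm(G)^2$ does hold for $2$-lifts of bipartite graphs), and Step~2 is the standard fact that a tower of independent random $2$-lifts has essentially large girth. But these two steps point in the \emph{wrong} direction for a lower bound on $\Pm(G)$: they show that the entropy per site can only decrease down the tower, whereas a proof of Schrijver's bound needs to show it cannot fall below the limiting value. To make such an argument non-circular you would have to prove the lower bound on the limit by some means independent of Schrijver's theorem --- e.g.\ Gurvits's capacity/stable-polynomial method --- at which point the lift construction becomes superfluous.
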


In other words, for a $d$--regular bipartite graph $G$ we have
\[
\frac{\ln\operatorname{pm}(G)}{v(G)}\geq\frac{1}{2}\ln\left( \frac
{(d-1)^{d-1}}{d^{d-2}}\right) .
\]
For $d=3$, this theorem was proved by M. Voorhoeve \cite{vor}. Then A. Schrijver
\cite{sch1} proved it for every $d$. A very elegant new proof was given by
L. Gurvits \cite{gur}. For a simplified version of Gurvits's proof see \cite{L-S}.

A. Schrijver and W. G. Valiant proved in \cite{sch2} that the exponent
\[
\frac{(d-1)^{d-1}}{d^{d-2}}%
\]
cannot be improved by showing that for a random $d$--regular bipartite
multigraph the statement is asymptotically sharp. I.\ Wanless noticed in
\cite{wan} that the same holds if we do not allow multiple edges.  More refined results are proved by B. Bollob\'as and B. D. McKay \cite{bolmc}. Theorem
\ref{entropy-ggirth} shows that the only thing that is relevant here about random $d$--regular
bipartite graphs is that they are of essentially large girth. In particular,
this makes the sharpness statement of A. Schrijver and W. G. Valiant constructive, as
Part (c) of Theorem~\ref{entropy-ggirth} now allows us to \emph{construct}
bipartite graphs with an asymptotically minimal number of perfect matchings.
Indeed, we simply have to construct $d$--regular bipartite graphs with large
girth, which is known to be possible in various ways. See for instance
\cite{erd} for constructing $d$--regular graphs with large girth and note that
if $G$ is $d$--regular, then the (weak) direct product $G\times K_{2}$ is
$d$--regular bipartite and satisfies $g(G\times K_{2})\geq g(G)$.

\subsection{Perfect matchings with no assumption on girth}

It is natural to ask whether Theorem \ref{entropy-intro} holds for the number
$\operatorname{pm}(G)$ of perfect matchings rather than the number
$\mathbb{M}(G)$ of all matchings. It is easy to see that in the class of all
graphs, the perfect matching entropy per site
\[
\frac{\ln\operatorname{pm}(G)}{v(G)}%
\]
is not estimable. Indeed, one can consider a large graph with many perfect
matchings and then add an isolated vertex to it. Then the two graphs are very
close in local statistics, but the latter graph has no perfect matching. This
is of course a quite cheap example. On the other hand, it turns out that the
situation does not get much better even for the class of $d$--regular bipartite graphs.

\medskip\noindent\textbf{Notation.} Given a finite graph admitting at least
one perfect matching, and an edge $e$, let $p(e)$ denote the probability that
$e$ is contained in a uniformly chosen random perfect matching of the graph.
\medskip

We shall prove that perfect matchings can get quite unevenly distributed.

\begin{Th}
\label{construction} For any integer $d\ge3$, there exists a constant $0<c<1$
such that  for any positive integer $n\ge d$ there exists a $d$--regular
bipartite  simple graph on $2\cdot n$ points with an edge $e$ such that
\[
p(e)>1-c^{n}.
\]

\end{Th}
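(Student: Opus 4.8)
The plan is to build the desired graph by attaching a long ``rigid'' gadget to a short segment of a structure that forces a single edge $e$ to lie in almost every perfect matching. The key idea is to exploit a one–dimensional recursion: consider a path-like or ladder-like $d$--regular bipartite gadget $H_k$ on roughly $2k$ vertices whose perfect matchings are governed by a linear recurrence, so that the number $\Pm(H_k)$ and the number $\Pm(H_k - e)$ of perfect matchings avoiding a fixed boundary edge $e$ grow like $\lambda_1^k$ and $\lambda_2^k$ respectively with $\lambda_2<\lambda_1$; then $p(e)=1-\Pm(H_k-e)/\Pm(H_k)\ge 1-c^k$ with $c=\lambda_2/\lambda_1<1$. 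Concretely, I would take a cycle $C_{2m}$ (for bipartiteness) and thicken it to a $d$--regular bipartite graph by adding, between appropriately chosen pairs of vertices, copies of a small $(d-2)$--regular bipartite ``bridge,'' arranging that along a long arc the only way to complete a perfect matching is to alternate in one of two phases, one of which is strongly disfavored once a designated edge $e$ is used the ``wrong'' way.

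First I would nail down the gadget. A clean choice: start from the bipartite graph $P_k \times K_2$-type ladder but make it $d$--regular by replacing each rung by a complete bipartite $K_{d-1,d-1}$ minus a perfect matching, or more simply iterate the operation ``glue a copy of $K_{d,d}$ minus an edge.'' For each such gadget one computes the transfer matrix $T$ acting on the ``interface states'' (which boundary vertices are already matched); the entries of $T^k$ then give $\Pm$ of the whole gadget with prescribed boundary conditions. Since $T$ is a nonnegative primitive matrix, Perron–Frobenius gives a simple top eigenvalue $\lambda_1>0$ and a spectral gap, so any fixed boundary condition contributes $\Theta(\lambda_1^k)$, while the boundary condition ``$e$ is present'' projects onto a proper subspace and contributes $O(\lambda_2^k)$ with $|\lambda_2|<\lambda_1$; I would verify the strict inequality by an explicit small computation for the chosen gadget (this is where $d\ge 3$ enters, since for $d=2$ the graph is a union of cycles and no gap exists). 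Setting $c=|\lambda_2|/\lambda_1$ and $k\asymp n$ yields $p(e)>1-c^n$ after absorbing constants; the condition $n\ge d$ is just to guarantee that a $d$--regular bipartite simple graph on $2n$ vertices containing the gadget actually exists.

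The main obstacle will be simultaneously meeting three constraints: (i) the whole graph must be $d$--regular and, crucially, a \emph{simple} bipartite graph on \emph{exactly} $2n$ vertices for \emph{every} $n\ge d$; (ii) the transfer matrix must have a genuine spectral gap (so the ``bad'' count is exponentially smaller, not just smaller); and (iii) the edge $e$ must sit at the interface so that using it forces the subdominant phase. Reconciling (i) with (ii) is the delicate part — naive $d$--regular gadgets often have transfer matrices whose relevant submatrix still has spectral radius $\lambda_1$, giving only $p(e)\to 1$ polynomially. I would handle this by choosing the gadget so that ``$e$ present'' forces a defect that can only be repaired by a short, rigid local reconfiguration, making the avoiding configurations correspond to closed walks in a strictly smaller graph; then I would pad with a small fixed bipartite $d$--regular block to fix the parity and the exact vertex count $2n$ without disturbing the asymptotics. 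Finally I would check that every vertex has degree exactly $d$ and that no multi-edges were introduced, completing the construction.
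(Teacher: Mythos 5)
Your approach (transfer matrix on a path/cycle gadget, exponential gap from a spectral gap) is genuinely different from the paper's, but as written it has a real gap precisely where you flag the ``delicate part.'' For a $d$--regular bipartite gadget closed into a cycle with a primitive transfer matrix $T$, the total count $\Pm(H_k)$ is essentially $\operatorname{tr}(T^k)\sim\lambda_1^k$, and the count of perfect matchings avoiding a designated edge is $\operatorname{tr}(P T^k P)$ for some projection $P$; for a \emph{primitive} $T$ this also grows like $\lambda_1^k$ unless the range of $P$ is somehow orthogonal to the Perron eigenvector, which does not happen for a generic $d$--regular gadget. So the naive version gives $p(e)$ bounded away from $1$ rather than exponentially close to $1$. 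What is actually needed --- and what the paper exploits --- is a \emph{reducible} (block-diagonal) transfer structure where the two ``phases'' do not mix at all. You also leave unaddressed a nontrivial prerequisite: you need a seed graph with $p(e)>1/2$ (not merely $p(e)\ne 1/2$), and in the natural base cases such as $K_{d,d}$ one has $p(e)=1/d<1/2$, so some bootstrapping is required.

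The paper achieves both points with a single, very concrete device. Construction~\ref{constr} takes $n$ disjoint copies of a $d$--regular bipartite $G$, deletes all copies of one edge $e=\{x,y\}$, and reconnects cyclically: copy $i$ of $x$ to copy $i+1$ of $y$. A parity argument shows any perfect matching of the resulting $G'$ uses either \emph{all} $n$ new edges or \emph{none}, so the effective transfer matrix is $2\times 2$ diagonal with entries $\Pm_e(G)$ and $\Pm_{\bar e}(G)$ (the counts of perfect matchings using, resp.\ avoiding, $e$). Hence $p(e')=p(e)^n/\bigl(p(e)^n+(1-p(e))^n\bigr)$, and once $p(e)>1/2$ this is $1-c^n$ with $c=(1-p(e))/p(e)<1$. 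The inequality $p(e)>1/2$ is then obtained by an explicit finite sequence of such covers starting from $K_{d,d}$, via an induction on $d$ using the maps $T_i^{(n)}$. If you want to push your transfer-matrix picture through, the thing to add is exactly this block-diagonal (all-or-nothing) structure and an argument producing a starting edge with $p(e)>1/2$; without these two ingredients the spectral-gap claim does not hold for the gadgets you describe.
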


This leads to the following.

\begin{Th}
\label{perfect_matching} Fix $d\ge3$. The perfect matching entropy per site
\[
\frac{\ln\operatorname{pm} (G)}{v(G)}%
\]
is not estimable in the class of $d$--regular bipartite simple graphs.
\end{Th}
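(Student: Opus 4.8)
The plan is to combine Theorem~\ref{construction} with a gluing construction and the estimability of the matching entropy per site (Theorem~\ref{entropy-intro}) to produce two Benjamini--Schramm convergent sequences of $d$--regular bipartite simple graphs that have the same local statistics but different perfect matching entropy per site. First I would take, for each $n$, the graph $H_n$ on $2n$ vertices guaranteed by Theorem~\ref{construction}, together with its special edge $e_n=u_nw_n$ satisfying $p(e_n)>1-c^n$. The idea is to build a large graph by taking many disjoint copies of $H_n$ and connecting them into a long ``chain'' through the special edges: delete each edge $e_n$ and reconnect the resulting degree-deficient vertices across consecutive copies so that the whole graph stays $d$--regular, bipartite, and simple. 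One clean way to do this is to delete $e_n$ from each copy and then, between consecutive copies, add an edge joining the $u$-vertex of one copy to the $w$-vertex of the next (and close up the chain or cap the two ends with one extra small $d$--regular bipartite gadget so that every vertex has degree exactly $d$); bipartiteness is preserved because $e_n$ joins the two colour classes, so $u_n$ and $w_n$ lie in opposite classes.

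The key point is a counting estimate: because $p(e_n)$ is exponentially close to $1$ inside a single copy of $H_n$, almost every perfect matching of $H_n$ uses $e_n$, so $H_n$ with $e_n$ deleted has very few perfect matchings — at most $c^n\cdot\Pm(H_n)$ of them. Hence in the chained graph $G_n^{(1)}$, along the chain the ``connector'' edges are forced to behave in a very rigid way, and one gets an upper bound $\ln\Pm(G_n^{(1)})/v(G_n^{(1)})\le \tfrac12\ln S_d' $ for some constant strictly smaller than what is achievable. For the contrasting sequence $G_n^{(2)}$, I would instead keep the edges $e_n$ (i.e.\ take the disjoint union of copies of $H_n$, again capped off to be $d$--regular), so that $\Pm(G_n^{(2)})=\Pm(H_n)^{k}$ up to boundary corrections; since $\Pm(H_n)$ grows at the generic rate (it has many perfect matchings — indeed $H_n$ is $d$--regular bipartite so Theorem~\ref{Schrijver} gives $\Pm(H_n)\ge((d-1)^{d-1}/d^{d-2})^{n}$), the perfect matching entropy per site of $G_n^{(2)}$ stays bounded below by a strictly larger constant. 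Both sequences can be arranged to be Benjamini--Schramm convergent to the same limit (e.g.\ both converge to the $d$--regular tree, or at least to the same rooted-neighbourhood distribution) because the two constructions differ only in an $o(v)$ fraction of vertices near the connector/boundary edges, and the local statistics of the bulk are identical; this is where I would invoke that deleting/adding $o(v(G))$ edges does not change the limit of $\mathbb P(G,\alpha,r)$.

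The main obstacle will be the bookkeeping that makes the two sequences genuinely $d$--regular bipartite simple graphs while keeping them Benjamini--Schramm convergent with the \emph{same} limit: one must choose the connector edges and end-caps carefully so that no multiple edges are created and so that the perturbation is $o(v(G_n))$ per unit length of the chain, which forces the number $k$ of copies to grow with $n$ at a controlled rate. A secondary subtlety is turning ``$p(e_n)$ close to $1$'' into a clean bound on $\Pm(H_n\setminus e_n)$ and then propagating that through the chain: one needs that a perfect matching of $G_n^{(1)}$ restricts to a near-perfect matching on each copy, and that the number of ways to complete it across the $o(1)$-density connectors is subexponential per copy. Once these estimates are in place, the strict gap between the two limiting values of $\ln\Pm/v$ contradicts estimability, proving the theorem.
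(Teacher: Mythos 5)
Your overall strategy matches the paper's: use Theorem~\ref{construction} to obtain $d$--regular bipartite graphs with a very skewed edge probability, then build two sequences that agree locally up to an $o(v)$ perturbation but have different perfect matching entropy. However, your actual mechanism for making $\Pm$ small in one of the sequences does not work, and it misses the one genuinely new idea in the paper's proof.

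The flaw is in your claim that chaining copies of $H_n$ through the special edges $e_n$ produces a graph with few perfect matchings. Deleting $e_n$ from $k$ copies and reconnecting them cyclically (as in Construction~\ref{constr}) gives a graph $G'$ whose perfect matchings either use \emph{all} the connector edges or \emph{none} of them, so
\[
\Pm(G')=\Pm(H_n)^{k}\bigl(p(e_n)^{k}+(1-p(e_n))^{k}\bigr).
\]
Since $p(e_n)>1-c^{n}$ is close to $1$, this is essentially $\Pm(H_n)^{k}$, the same as for $k$ disjoint copies: the connector edges are happy to be used, each copy then contributes its full complement of perfect matchings through $e_n$, and nothing is lost. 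Your observation that $\Pm(H_n\setminus e_n)$ is small is correct, but it is beside the point once the connectors are added back, because a perfect matching of $G'$ is not forced to avoid the endpoints of $e_n$ inside each copy. So $G_n^{(1)}$ and $G_n^{(2)}$ have essentially the same perfect matching entropy per site, and your pair of sequences does not exhibit a gap.

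The missing ingredient is the paper's use of a \emph{second} edge. If $f$ is an edge adjacent to $e$, then in any perfect matching at most one of $e,f$ is used, so $p(f)\le 1-p(e)<c^{n}$: one edge is nearly always in the matching, the other nearly never. The paper forms $\tilde G$ from two disjoint copies of $G$ by deleting $e$ from one copy and $f$ from the other and adding two cross edges. Now a perfect matching of $\tilde G$ either uses both cross edges (weight $p(e)p(f)$) or neither (weight $(1-p(e))(1-p(f))$), and both products are at most $c^{n}$, so $\Pm(\tilde G)<2c^{n}\Pm(G)^{2}$. Compared to the disjoint union $2G$ with $\Pm(2G)=\Pm(G)^{2}$, this gives a gap of $\tfrac14\ln(1/c)>0$ in $\ln\Pm/v$. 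Since $2G$ and $\tilde G$ differ in only two edges, interleaving a Benjamini--Schramm convergent subsequence of $2G_n$ with the corresponding $\tilde G_n$ gives a convergent sequence on which $\ln\Pm/v$ does not converge — far simpler bookkeeping than the chain-and-endcaps you propose, and no appeal to Schrijver's bound or to Theorem~\ref{entropy-intro} is needed. You would need to introduce this two-edge deletion to make your proof go through.
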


The phenomenon in Theorem~\ref{perfect_matching} does not occur for
\emph{expander} bipartite graphs. Indeed, it can be deduced from Corollary 1 of D.\ Gamarnik and D.\ Katz \cite{GK} that
for any $\delta>0$,  the perfect matching entropy per site
is estimable for $d$--regular bipartite $\delta$-expander graphs. We thank D. Gamarnik for pointing this out for us.
The phenomenon in Theorem~\ref{construction} cannot occur either for
expander bipartite graphs: an edge probability cannot be exponentially close to 1. In fact, we shall  prove the following stronger statement about edge probabilities.

\begin{Th}
\label{expander}Let $n\geq2$, $\delta>0$, let $G$ be a $\delta$-expander
bipartite graph of maximum degree $d$ on $2\cdot n$ vertices, and $e$ an edge
of $G$. Assume that $G$ admits a perfect matching.
%Let $M$ be a perfect matching chosen uniformly at random.
Then
\[
%\mathbb{P}(e \in M)
p(e)\geq\frac{1}{d}n^{-2\cdot{\ln(d-1)}/{\ln(1+\delta)}}.
\]

\end{Th}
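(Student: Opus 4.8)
The plan is to lower-bound $p(e)$ by explicitly constructing a reasonably large family of perfect matchings through $e$, relative to the total count $\Pm(G)$, using the expansion property to control alternating paths. Write $e=uv$, let $M$ be any fixed perfect matching of $G$, and consider the partition of the edge set according to whether $e\in M$ or not. If $e\in M$ for a positive fraction of all perfect matchings we are done easily, so assume not. For a perfect matching $M'$ with $e\notin M'$, the symmetric difference $M\triangle M'$ is a disjoint union of alternating cycles; the contribution of $e$ sits in one such cycle $C$, and "rotating" along $C$ (replacing the $M'$-edges of $C$ by the $M$-edges) produces a new perfect matching that does contain $e$. The key point is to bound, for each perfect matching $N$ through $e$, how many $M'$ can rotate to $N$ in this way — and this is where the length of the alternating cycle matters.

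The main step is the following dichotomy driven by expansion: either there is a short $M$-alternating path from $\{u,v\}$ back to $e$ (a short alternating cycle through $e$), in which case only $O(\text{(number of short cycles)})$ pre-images occur and we win a polynomial factor; or every alternating cycle through $e$ is long, in which case I grow the cycle vertex set greedily and invoke $\delta$-expansion to show the cycle neighborhoods expand by a factor $1+\delta$ at each step until they fill a constant proportion of the $2n$ vertices, forcing the relevant alternating structure to have size $\Theta(\log_{1+\delta} n)$. Either way, counting matchings through $e$ by first choosing the alternating cycle $C$ (at most $d^{|C|}$ choices, since each successive vertex of the cycle has at most $d-1$ extensions) and then a perfect matching of $G\setminus V(C)$, one gets
\[
\Pm(G)\le \Bigl(\sum_{C\ni e} (d-1)^{|C|-1}\Bigr)\cdot \max_{C}\Pm(G\setminus V(C)) \;+\; d\cdot \#\{M'\ni e\},
\]
and bounding $|C|\le 2\log_{1+\delta}n+O(1)$ turns the geometric sum into $(d-1)^{2\log_{1+\delta}n}\cdot\text{poly}=n^{2\ln(d-1)/\ln(1+\delta)}\cdot\text{const}$. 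Rearranging yields $p(e)=\#\{M'\ni e\}/\Pm(G)\ge \tfrac1d\, n^{-2\ln(d-1)/\ln(1+\delta)}$, which is exactly the claimed bound.

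The hard part will be making the expansion argument quantitative in the right way: $\delta$-expansion is a statement about \emph{vertex subsets}, whereas what I need to control is the length of \emph{$M$-alternating cycles}, and I must feed the expanding set back through the fixed matching $M$ correctly (alternately applying "take the $M$-partner" and "take a $G$-neighbor") so that the neighborhood-growth estimate $|N_i|\ge (1+\delta)^i$ genuinely bounds the cycle length, and so that the count $d^{|C|}$ of candidate cycles is not an over-count that swamps the gain. A secondary subtlety is the base of the exponent: the bound in the statement has $(d-1)$, not $d$, so the greedy cycle-growth must be organized so that at each new vertex only $d-1$ edges are available (the edge back along the cycle being already used), and the leftover factor of $\tfrac1d$ must be absorbed cleanly from the first vertex $u$. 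Once these bookkeeping issues are handled, the inequality assembles directly from the expansion estimate and the trivial bound $\Pm(G\setminus V(C))\le \Pm(G)$-type comparison used above.
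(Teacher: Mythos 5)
Your overall strategy matches the paper's: lower-bound $p(e)$ by comparing the set $A$ of perfect matchings avoiding $e$ to the set $B$ of perfect matchings through $e$, connect $M'\in A$ to $N\in B$ by rotating along a \emph{short} alternating cycle through $e$, and use the expansion to bound the cycle length by $O(\log_{1+\delta}n)$ so that the multiplicity of each $N$ is at most $(d-1)^{O(\log_{1+\delta}n)}=\operatorname{poly}(n)$. The paper packages this as a bipartite auxiliary graph $H$ on $A\cup B$, with $M'\sim N$ iff $M'\triangle N$ is a cycle of length $\le 2+4\ln n/\ln(1+\delta)$; every $M'\in A$ has degree $\ge 1$ and every $N\in B$ has degree $\le(d-1)^{1+2\ln n/\ln(1+\delta)}$, giving $p(e)=|B|/(|A|+|B|)\ge d^{-1}n^{-2\ln(d-1)/\ln(1+\delta)}$.

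Two points in your write-up are genuine gaps. First, fixing a reference matching $M$ and rotating along the cycle of $M\triangle M'$ through $e$ does not work: that particular cycle can be as long as $2n$ (take $M$ and $M'$ differing by a Hamiltonian cycle through $e$), and expansion gives you no control over it. What you want is a short $M'$-alternating cycle $C$ through $e$ --- short in an absolute sense, not one tied to a fixed $M$ --- and then set $N=M'\triangle C$. Dropping the fixed $M$ entirely, as the paper does, removes the problem; your ``dichotomy'' is really just the assertion that such a short $C$ always exists. Second, and more importantly, you flag but do not carry out the key lemma: turning $\delta$-expansion of vertex subsets into a bound on alternating-cycle lengths. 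This is exactly where the paper does real work: given a perfect matching $M'$ of $G=(U,V,E)$, form the digraph $\overline{G}$ on vertex set $V$ with $(x,y)\in E(\overline{G})$ iff $xu\in M'$ and $uy\in E(G)$ for some $u\in U$. Then $\overline{G}$ inherits the $\delta$-expansion of $G$, the standard breadth-first argument shows every edge of a $\delta$-expander digraph lies on a directed cycle of length $\le 1+2\ln n/\ln(1+\delta)$, and such a directed cycle pulls back to an $M'$-alternating cycle of $G$ of length $\le 2+4\ln n/\ln(1+\delta)$ through $e$. Without this reduction the phrase ``grow the cycle vertex set greedily'' has no precise meaning, because the thing that expands is a set of vertices on one side of the bipartition, reached by alternating an $M'$-step with a $G$-step. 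Finally, your displayed ``assembly inequality'' is not quite a valid inequality as written (the sum should run only over \emph{short} alternating cycles, and the map $M'\mapsto(C,M'\setminus C)$ needs a canonical choice of $C$ to be an injection), but once the short-cycle lemma is in place the clean way to finish is the degree count in $H$ sketched above, which gives the stated constant $1/d$ without further bookkeeping.
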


\subsection{Structure of the paper}

The paper is organized as follows. In Section~\ref{claw-free}, we gather a few
known results about the independence polynomial and prove Theorem~\ref{wcind}
about independent vertex sets in claw-free graphs. The reader only interested
in matchings can skip this section without harm.

In Section~\ref{matching_measure}, we gather some known results about the
matching polynomial and prove Theorem~\ref{wc} about matchings.
%As an application of this theorem we prove
%Theorem~\ref{entropy} and
%Theorem~\ref{claw-free} in Section~\ref{application_wc}.
In Section~\ref{GIRTH}, we prove Theorem~\ref{entropy-ggirth} about matchings
in essentially large girth graphs. In Section~\ref{perfect_entropy}, we prove the negative
results: Theorem~\ref{construction} and Theorem~\ref{perfect_matching}. In
Section~\ref{expander_helps} we give the proof of Theorem~\ref{expander} about
expanders. Finally, in Section~\ref{we don't know} we pose some open problems.

\section{Independent sets in claw-free graphs} \label{claw-free}
\subsection{The independence polynomial}
\begin{Def} Let $G$ be a graph on $v(G)$ vertices. Let ${\alpha(G)}$ be the maximal size of an independent vertex set,  and let $i_k(G)$ denote the number of independent sets
  of size $k$. Then the \it independence  polynomial \rm  $I(G,x)$ is defined as follows:
$$I(G,x)=\sum_{k=0}^{\alpha(G)}i_k(G)x^{k}.$$
Note that $i_0(G)=1$.
The \it independence measure \rm $\sigma_G$  is defined as

\begin{equation*}
\sigma _{G}=\frac{1}{v(G)}\sum\limits_{\lambda }r(G, \lambda )\delta _{\lambda }
\end{equation*}%
where $\lambda $ runs through the roots of $I(G,x)$, $r(G, \lambda )$ is the multiplicity of $\lambda $ as a root of $I(G,x)$ and $\delta _{\lambda }$ denotes the Dirac measure at $\lambda $.
\end{Def}
Note that unless $G$ is the empty graph, the independence measure is not a probability measure.

Many graph parameters related to independent sets can be read off from % the matching polynomial, or, equivalently,
 the independence measure.
\begin{Def}
For a finite graph $G$ let $\kappa_G$ denote the size of a uniform random independent subset of $G$.
\end{Def}
So $\kappa_G$ is a random variable depending on $G$.

Besides the number of all independent sets $$\mathbb I(G)=I(G,1)=\sum_{k=0}^{\alpha(G)}i_k(G)$$
we shall also be interested in the expected size
\[\mathbb E\kappa_G=\frac{\sum ki_k(G)}{\sum i_k(G)}\]
and the variance
\[{\mathbb D^2\kappa_G} =\frac{\sum (k-\mathbb E\kappa_G)^2i_k(G)}{\sum i_k(G)}.\]

\begin{Prop}\label{indpar} \begin{enumerate}
\item[(a)]
For the \emph{independent set entropy per vertex}, we have  \[\frac{\ln \mathbb I(G)}{v(G)}=\int\ln|1-x|\mathrm d\sigma_G(x).\]

\item[(b)] The normalized expected value \begin{align*}
\frac{\mathbb E\kappa_G}{v(G)} =
\int\frac{\mathrm 1}{1-x} d\sigma_G(x)
.
\end{align*}

\item[(c)]
The normalized variance
\[\frac{\mathbb D^2\kappa_G}{v(G)} =\int \frac{-x}{(1-x)^2}\mathrm d\sigma_G(x).\]

\item[(d)]
The  independence ratio of $G$ equals \[\alpha (G)/v(G)=\sigma_G(\C).\]
\end{enumerate}
\end{Prop}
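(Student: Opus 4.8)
The plan is to prove Proposition~\ref{indpar} by direct computation, expressing each quantity in terms of the roots of $I(G,x)$ and then recognizing the resulting symmetric functions of the roots as integrals against $\sigma_G$. Write $I(G,x)=\prod_{\lambda}(1-x/\lambda)^{?}$---more precisely, since $i_0(G)=1$, we can factor $I(G,x)=\prod_{j=1}^{\alpha(G)}(1-x\mu_j)$ where the $\mu_j=1/\lambda_j$ run over reciprocals of the roots $\lambda_j$ (listed with multiplicity). The key dictionary is that for any function $h$, $\int h(x)\,\mathrm d\sigma_G(x)=\frac{1}{v(G)}\sum_j h(\lambda_j)$, the sum being over all roots with multiplicity.

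First I would handle (a): $\ln\mathbb I(G)=\ln I(G,1)=\sum_j\ln|1-\mu_j|=\sum_j\ln|1-1/\lambda_j|=\sum_j\ln|(\lambda_j-1)/\lambda_j|=\sum_j\ln|1-\lambda_j|-\sum_j\ln|\lambda_j|$. I must argue the second sum vanishes; this should follow because the product of the roots $\prod_j\lambda_j$ equals $\pm i_0(G)/i_{\alpha(G)}(G)$ up to sign, but actually the cleaner route is to note $\prod_j\mu_j = \pm i_{\alpha(G)}(G)$ while... hmm, the honest statement is that $\sum_j\ln|\lambda_j|$ need not vanish, so instead I would verify directly: $\ln|I(G,1)|=\sum_j\ln|1-\lambda_j| - \sum_j \ln|\lambda_j|$ is wrong unless I set it up as $I(G,x)=i_{\alpha}(G)\prod(x-\lambda_j)$ and compare. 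The safe approach: from $I(G,x)=\prod_j(1-\mu_j x)$ with $\mu_j=1/\lambda_j$, evaluate at $x=1$ to get $\mathbb I(G)=\prod_j(1-\mu_j)$, so $\ln\mathbb I(G)=\sum_j\ln|1-1/\lambda_j|$, and then $\ln|1-1/\lambda_j| = \ln|1-\lambda_j| - \ln|\lambda_j|$, and one checks $\sum_j\ln|\lambda_j| = \ln\prod_j|\lambda_j| = \ln|1/i_{\alpha(G)}(G)|\cdot$(sign issues)---this is the step I expect to be the main obstacle, namely bookkeeping the relation between the leading coefficient, the product of roots, and getting the claimed clean formula $\int\ln|1-x|\,\mathrm d\sigma_G$. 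I suspect the resolution is that the proposition as stated is using the convention that one writes $I(G,x)=\prod(1-x\mu_j)$ and the integral is literally $\frac1{v(G)}\sum\ln|1-\lambda_j|$ only after checking the correction term is absorbed, or that the product of the reciprocal roots works out; I would nail this down carefully.

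Next, for (b) and (c), I would use logarithmic differentiation. From $\ln I(G,x)=\sum_j\ln(1-\mu_j x)$ we get $\frac{I'(G,x)}{I(G,x)}=\sum_j\frac{-\mu_j}{1-\mu_j x}$. Since $\mathbb E\kappa_G=\frac{\sum_k k\,i_k(G)}{\sum_k i_k(G)}=\frac{I'(G,1)}{I(G,1)}$, evaluating at $x=1$ gives $\mathbb E\kappa_G=\sum_j\frac{-\mu_j}{1-\mu_j}=\sum_j\frac{-1/\lambda_j}{1-1/\lambda_j}=\sum_j\frac{-1}{\lambda_j-1}=\sum_j\frac{1}{1-\lambda_j}$, so dividing by $v(G)$ yields the claimed $\int\frac{1}{1-x}\,\mathrm d\sigma_G(x)$. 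For the variance, $\mathbb D^2\kappa_G=\mathbb E\kappa_G^2-(\mathbb E\kappa_G)^2$; using the standard identity with the second derivative, $\mathbb D^2\kappa_G=\frac{d}{dx}\!\left(x\frac{I'(G,x)}{I(G,x)}\right)\big|_{x=1}$ (factorial moments), or more directly $\mathbb D^2\kappa_G = \left(\ln I(G,e^t)\right)''\big|_{t=0}$ via the cumulant generating function---this gives $\sum_j\frac{-\mu_j}{(1-\mu_j)^2}$, and substituting $\mu_j=1/\lambda_j$ and simplifying $\frac{-1/\lambda_j}{(1-1/\lambda_j)^2}=\frac{-\lambda_j}{(\lambda_j-1)^2}=\frac{-\lambda_j}{(1-\lambda_j)^2}$ gives the claimed integrand. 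I would present the cumulant-generating-function computation since it handles (b) and (c) uniformly and transparently.

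Finally, (d) is essentially a restatement: $\deg I(G,x)=\alpha(G)$ by definition, so $I(G,x)$ has exactly $\alpha(G)$ roots counted with multiplicity, hence the total mass $\sigma_G(\C)=\frac{1}{v(G)}\sum_j r(G,\lambda_j)=\frac{\alpha(G)}{v(G)}$. The only thing to check is that the leading coefficient $i_{\alpha(G)}(G)$ is nonzero, which is immediate from the definition of $\alpha(G)$ as the maximal size. I would remark that all four parts are purely formal manipulations once the factorization of $I(G,x)$ is fixed, and the sole genuinely delicate point is the vanishing (or correct handling) of the $\sum_j\ln|\lambda_j|$ term in part (a), which I would treat with care, possibly by noting $I(G,x)$ and its reversal, or by an explicit sign/leading-coefficient argument.
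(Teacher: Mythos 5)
Your instinct that the $\sum_j \ln|\lambda_j|$ term does not vanish is correct, and it is not a bookkeeping obstacle that can be absorbed: part (a) as stated is not a correct pointwise identity unless $i_{\alpha(G)}(G)=1$. Since $I(G,x)=i_{\alpha(G)}(G)\prod_j(x-\lambda_j)$ with $I(G,0)=1$, one has $\prod_j|\lambda_j|=1/i_{\alpha(G)}(G)$, so in fact
\[
\frac{\ln\mathbb I(G)}{v(G)}=\frac{\ln|I(G,1)|}{v(G)}=\int\ln|1-x|\,\mathrm d\sigma_G(x)+\frac{\ln i_{\alpha(G)}(G)}{v(G)}.
\]
The extra term is already visible for $G=K_2$: there $I(G,x)=1+2x$, the single root is $-1/2$, and $v(G)=2$, so $\int\ln|1-x|\,\mathrm d\sigma_G=\tfrac12\ln\tfrac32$ while $\tfrac{\ln\mathbb I(G)}{v(G)}=\tfrac12\ln 3$; the difference is exactly $\tfrac12\ln 2=\ln i_1(K_2)/v(K_2)$. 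The paper's own proof of (a) is a one-line assertion and suppresses this term too. The clean identity does hold with $\sigma_G$ replaced by the probability measure $\tau_G$ that appears later in the proof of Theorem~\ref{wcind} (uniform on the $v(G)$ roots of the monic reversed polynomial $x^{v(G)}I(G,1/x)$), because $\tau_G$ places the compensating mass $(v(G)-\alpha(G))/v(G)$ at $0$, where $\ln|1-x|$ vanishes. So do not try to ``nail down'' the formula as written; flag the missing term $\ln i_{\alpha(G)}(G)/v(G)$ instead.

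Your arguments for (b), (c) and (d) are correct. For (c) the paper expands $\bigl(\sum_j 1/(1-\lambda_j)\bigr)^2$ and identifies the cross terms with $I''(G,1)/I(G,1)=\mathbb E\bigl(\kappa_G(\kappa_G-1)\bigr)$, whereas you differentiate the cumulant generating function $t\mapsto\ln I(G,e^t)$ twice; these are equivalent (the same factorial-moment identity), and your version treats (b) and (c) uniformly, which is a mild gain in transparency. Note that (b) and (c) are immune to the leading-coefficient issue of (a), because $I'/I$ and its derivative depend only on the roots, not on the leading coefficient. Part (d) is immediate, as you say, from $\deg I(G,\cdot)=\alpha(G)$.
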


\begin{proof} \begin{enumerate}
\item[(a)]
We have \[\mathbb I(G)=\sum_{k=0}^{\alpha(G)}i_k(G)=I(G,1)%=\exp\left(v(G)\int\ln|\sqrt{-1}-x|\mathrm d\rho_G(x)\right)
.\] Thus,  \[\frac{\ln \mathbb I(G)}{v(G)}=\frac{\ln |I(G,1)|}{v(G)}=\int\ln|1-x|\mathrm d\sigma_G(x).\]

\item[(b)] We have \begin{align*}
\frac{\mathbb E\kappa_G}{v(G)} =\frac1{v(G)}
\frac{\sum ki_k(G)}{\sum i_k(G)}=
\frac1{v(G)}\frac{I'(G,1)}{I(G,1)}=
\int\frac{\mathrm d\sigma_G(x)}{1-x}
.
\end{align*}

\item[(c)]
Let $\lambda_1$, \dots, $\lambda_{\alpha(G)}$
be the roots of the polynomial $I(G,x)$.
We have \[\left(\sum\frac1{1-\lambda_i}\right)^2=\sum\frac1{(1-\lambda_i)^2}+\sum_{i\ne j}\frac1{(1-\lambda_i)(1-\lambda_j)},\]
i.e.,
\[
\left(v(G)\int\frac{\mathrm d\sigma_G(x)}{1-x}\right)^2=v(G)\int\frac{\mathrm d\sigma_G(x)}{(1-x)^2}+\frac{I''(G,1)}{I(G,1)},\] in other words,
\[(\mathbb E\kappa_G)^2=v(G)\int\frac{\mathrm d\sigma_G(x)}{(1-x)^2}+\mathbb E(\kappa_G(\kappa_G-1)),\]
so
\[\mathbb D^2\kappa_G=\mathbb E\kappa_G^2-(\mathbb E\kappa_G)^2=\mathbb E\kappa_G-v(G)\int\frac{\mathrm d\sigma_G(x)}{(1-x)^2},\] and the claim follows using statement (b).
%We have \[\frac{\mathbb D_G^2k}{v(G)} =\frac1{v(G)}\frac{\sum (k-\mathbb E_Gk)^2i_k(G)}{\sum i_k(G)}=\int \frac{-x}{(1-x)^2}\mathrm d\sigma_G(x).\]

\item[(d)] Obvious from the definition.
%The  independence ratio of $G$ is \[\alpha (G)/v(G)=\sigma(\C).\]
\end{enumerate}
\end{proof}

To study the behaviour of the  independence measure in a convergent graph sequence, we need to have some control on the location of the roots in terms of the greatest degree in a graph.
It follows from Dobrushin's lemma
that all roots  of $I(G,x)$  have  absolute value greater than \[\beta := \frac{\exp(-1)}{d+1},\] where $d$ is the
greatest degree in $G$, cf.\ \cite[Corollary 5.10]{csi}.

The following lemma has its roots in \cite{luck}, see also \cite{abthvi}.

\begin{Lemma}\label{ize} For all $R>1$ we have
\begin{equation}\label{delta}{\sigma_G(|x|\ge R)}\le\frac{\ln(1/\beta)}{\ln R}.
\end{equation}
\end{Lemma}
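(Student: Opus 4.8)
The plan is to exploit the trivial bounds on the extreme coefficients of $I(G,x)$ together with the lower bound $\beta$ on the moduli of its roots. Write $I(G,x) = i_{\alpha}(G)\prod_{j=1}^{\alpha}(x-\lambda_j)$, where $\alpha = \alpha(G)$ and $\lambda_1,\dots,\lambda_{\alpha}$ are the roots listed with multiplicity. Evaluating at $0$ and using $i_0(G)=1$ gives $\prod_{j=1}^{\alpha}|\lambda_j| = i_0(G)/i_{\alpha}(G) = 1/i_{\alpha}(G) \le 1$, since $i_{\alpha}(G)$ is a positive integer. Note also that $0$ is never a root (as $i_0(G)=1\ne 0$), so every $\lambda_j$ satisfies $|\lambda_j| > \beta$ by the Dobrushin-type bound recalled above.

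Now fix $R>1$ and let $N$ be the number of indices $j$ with $|\lambda_j|\ge R$, so that $\sigma_G(|x|\ge R) = N/v(G)$ by definition of the independence measure. Splitting the product over the roots of modulus $\ge R$ and the remaining ones, and using $|\lambda_j|>\beta$ together with $\beta<1$ for the latter, I obtain $1 \ge \prod_{j}|\lambda_j| \ge R^{N}\beta^{\alpha - N}$. Taking logarithms and using $\ln R>0$ and $\ln(1/\beta)>0$, this rearranges to $N\ln R \le (\alpha-N)\ln(1/\beta) \le \alpha\ln(1/\beta) \le v(G)\ln(1/\beta)$. Dividing by $v(G)\ln R$ yields the claimed bound $\sigma_G(|x|\ge R) \le \ln(1/\beta)/\ln R$.

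There is no serious obstacle here; the only points that need a little care are that $i_{\alpha}(G)\ge 1$, so that the product of the root moduli is at most $1$, and that $\beta<1$, which is exactly what allows the contribution of the roots of modulus between $\beta$ and $R$ to be bounded below by the appropriate power of $\beta$. The degenerate case in which $G$ has no edges (all roots equal to $-1$, hence $\sigma_G(|x|\ge R)=0$ for every $R>1$) is covered automatically.
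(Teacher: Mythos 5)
Your proof is correct and takes essentially the same approach as the paper's: bound the product of the root moduli of $I(G,x)$ by $1$ via the constant and leading coefficients, split the product according to whether $|\lambda_j|\ge R$, and use the Dobrushin-type lower bound $|\lambda_j|>\beta$ together with $\alpha(G)\le v(G)$. The paper compresses this into the single inequality $R^{\sigma_G(|x|\ge R)}\beta\le 1$; your version simply spells out the intermediate steps.
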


\begin{proof}
The product of the roots of $I(G,x)$ is, in absolute value, equals $1/i_{\alpha(G)}(G)\le 1$.
Thus, for any $R>1$ we have
\[R^{\sigma_G(|x|\ge R)}\beta\le 1,\]
which proves the lemma. \end{proof}

\subsection{Claw-free graphs}
When $G$ is claw-free, all roots of $I(G,x)$ are real by \cite{chu}.

The following theorem deals with the behaviour of the independence  polynomial in
Benjamini--Schramm convergent  sequences of claw-free graphs.

\begin{Th} \label{wcind}
%\begin{itemize}
%\item[(a)]
Let $(G_n)$ be  a Benjamini--Schramm convergent
%bounded
%degree
 claw-free graph sequence with absolute degree bound  $d$.  Set   $H=(-\infty,-\beta]$.
Then the sequence of independence  measures  $\sigma_n=\sigma_{G_n}$
converges weakly to a  measure $\sigma$ on $H$. As $n\to\infty$, we have
\[\frac{\ln \mathbb I(G_n)}{v(G_n)}\to \int_H \ln (1-x)\mathrm d\sigma(x),\]
\[ \frac{\mathbb E\kappa_{G_n}}{v(G_n)}
%\frac{\sum ki_k(G_n)}{v(G_n)\sum i_k(G_n)}
\to\int_H\frac{\mathrm d\sigma(x)}{1-x},\]

 \[\frac{\mathbb D^2\kappa_{G_n}}{v(G_n)}% =\frac{\sum (k-\mathbb E_{G_n}k)^2i_k(G_n)}{v(G_n)\sum i_k(G_n)}
\to\int_H\frac{-x}{(1-x)^2}\mathrm d\sigma(x),\]
 and
\[\alpha(G_n)/v(G_n)\to\sigma(H).\]
In particular,   $(\ln\mathbb I)/v$, $\mathbb E\kappa/v$, $\mathbb D^2\kappa/v$  and $\alpha/v$ are estimable graph parameters for claw-free graphs.
\end{Th}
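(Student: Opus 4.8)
The plan is to run the machinery of \cite{csi} for the independence polynomial $f(G,x)=I(G,x)$ exactly as it is applied there to the matching polynomial, using the three inputs already assembled above: (i) the coefficients $i_k(G)$ of $I(G,x)$ are determined by the subgraph statistics of $G$ — more precisely $i_k(G)/v(G)$ is a sum over isomorphism types of $k$-vertex graphs of $\mathbb{P}(G,\cdot)$-type quantities, so each normalized coefficient is itself an estimable parameter; (ii) all roots of $I(G,x)$ are real for claw-free $G$ by \cite{chu}, and lie in $(-\infty,-\beta]$ by Dobrushin's lemma (Corollary 5.10 of \cite{csi}), where $\beta=\exp(-1)/(d+1)$; (iii) Lemma~\ref{ize} gives the tail bound $\sigma_G(|x|\ge R)\le \ln(1/\beta)/\ln R$, uniformly in $n$. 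The first step is to record that convergence of all the coefficients $i_k(G_n)/v(G_n)$ plus the uniform support/tail control forces the measures $\sigma_n$ to converge weakly to a measure $\sigma$ supported on $H=(-\infty,-\beta]$: the moments $\int x^k\,\mathrm d\sigma_n$ are rational functions of the normalized coefficients, hence converge, and the uniform tail estimate gives tightness and rules out escape of mass to infinity, so a standard moment-convergence-plus-tightness argument (as in \cite{abert_hubai,csi}) identifies the limit.

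Next I would upgrade weak convergence of $\sigma_n$ to convergence of the four integrals in the statement. The test functions $\ln(1-x)$, $1/(1-x)$ and $-x/(1-x)^2$ are all continuous on $H=(-\infty,-\beta]$ but unbounded as $x\to-\infty$, so weak convergence alone is not enough. Here the uniform tail bound of Lemma~\ref{ize} does the work: since $\sigma_n(|x|\ge R)\le \ln(1/\beta)/\ln R$ with a bound independent of $n$, and each of the three test functions grows only logarithmically ($\ln(1-x)$) or is bounded ($1/(1-x)$ and $-x/(1-x)^2$ actually tend to $0$ at $-\infty$), one splits $\int_H = \int_{|x|\le R}+\int_{|x|>R}$, controls the tail uniformly, and lets $R\to\infty$ after $n\to\infty$. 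Then Proposition~\ref{indpar}(a)--(c) identifies these integrals over $\sigma_n$ with $(\ln\mathbb I(G_n))/v(G_n)$, $\mathbb E\kappa_{G_n}/v(G_n)$ and $\mathbb D^2\kappa_{G_n}/v(G_n)$, giving the first three convergence statements.

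For the fourth statement, $\alpha(G_n)/v(G_n)=\sigma_n(H)$ by Proposition~\ref{indpar}(d), and $\sigma_n(H)=\sigma_n(\C)=\deg I(G_n,x)/v(G_n)$ is the total mass. This is the one place where weak convergence can fail to pass to the limit, because the total mass is not a bounded continuous functional (mass could in principle be lost "at infinity" in the limit $n\to\infty$, which would mean $\deg I(G_n)/v(G_n)$ does not converge to $\sigma(H)$). This is the main obstacle, and I expect it to be handled exactly as the analogous point for $\nu(G)/v(G)$ is handled in the matching case: the uniform tail bound already shows no mass escapes to infinity (the measures are tight), so $\sigma_n(H)\to\sigma(H)$; equivalently one shows directly that $\alpha(G_n)/v(G_n) = 1 - (\text{normalized size of a minimum vertex cover})$ or uses that $\alpha(G)/v(G)$ can be squeezed between two estimable quantities built from the $i_k$'s. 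Once all four integral identities are established and shown to converge, estimability of $(\ln\mathbb I)/v$, $\mathbb E\kappa/v$, $\mathbb D^2\kappa/v$ and $\alpha/v$ on the class of claw-free graphs with degree bound $d$ is immediate, and since the statement is for a fixed convergent sequence (hence a fixed degree bound), this completes the proof.
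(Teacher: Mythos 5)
There is a genuine gap at the very start of your argument, and it is exactly the point where the paper makes a non-obvious move. You claim that ``the moments $\int x^k\,\mathrm d\sigma_n$ are rational functions of the normalized coefficients, hence converge.'' For $k\ge 1$ this is false: since $\sigma_G$ is the (normalized) root distribution of $I(G,x)$, the $k$-th power sum $\sum_\lambda \lambda^k$ is by Newton's identities a function of the \emph{top} coefficients $i_{\alpha(G)}(G), i_{\alpha(G)-1}(G),\dots$ of $I(G,x)$ (more precisely of the ratios $i_{\alpha(G)-j}(G)/i_{\alpha(G)}(G)$), and these are global quantities that are not locally computable; there is no reason for $p_k(G_n)/v(G_n)$ to converge along a Benjamini--Schramm convergent sequence. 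The quantities that \emph{are} controlled by the low-index coefficients $i_1(G),\dots,i_k(G)$ are the \emph{negative} power sums $\sum_\lambda \lambda^{-k}$, and this is why the paper does not work with $\sigma_G$ directly. Instead it passes to the reversed polynomial $f(G,x)=x^{v(G)}I(G,1/x)=\sum_k i_k(G)x^{v(G)-k}$, which is monic of degree $v(G)$, is multiplicative over disjoint unions, has coefficients that are (by \cite[Fact 3.2]{csi}) linear combinations of subgraph counts, and whose roots lie in the \emph{compact} interval $K=[-1/\beta,0]$. For the associated uniform root distribution $\tau_{G_n}$ the hypotheses of \cite[Theorem 4.6(a)]{csi} are met, so $\int_K g\,\mathrm d\tau_{G_n}$ converges for every continuous $g$ on $K$. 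One then pulls this back under $x\mapsto 1/x$ (watching the atom at $0$, which carries mass $1-\alpha(G)/v(G)$) to get convergence of $\int_H h\,\mathrm d\sigma_n$ for continuous $h$ vanishing at $-\infty$, and only at this point does Lemma~\ref{ize} come in, to drop the decay assumption and deliver weak convergence of $\sigma_n$.

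A secondary issue: even setting aside which polynomial you apply it to, the phrase ``rational functions of the normalized coefficients, hence converge'' glosses over the actual content of \cite{abert_hubai,csi}. Newton's identities express the power sums as polynomials in the $i_j$'s containing products of subgraph counts; divided by a single factor of $v(G)$ these products do \emph{not} obviously converge. The substance of \cite[Theorem 4.6(a)]{csi} is precisely the combinatorial cancellation that rewrites the power sums as linear combinations of \emph{connected} subgraph counts, for which $H(G)/v(G)$ is estimable. So even the correct moments require invoking that result, not a generic ``rational function'' argument. Once you replace your first paragraph with the passage to $f(G,x)$ and $\tau_G$ on $K$ and the citation of \cite[Theorem 4.6(a)]{csi}, the rest of your write-up (tail control via Lemma~\ref{ize} to handle unbounded test functions, and Proposition~\ref{indpar} to identify the four integrals) matches the paper's proof.
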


Note that this recovers Theorem~\ref{claw--free}.

\begin{proof}
We consider the graph polynomial
\[f(G,x)=
x^{v(G)}I(G, 1/x)=\sum_{k=0}^{\alpha(G)} i_k(G)x^{v(G)-k}.\]
Let $\tau=\tau_G$ be the probability measure of uniform distribution on the roots of $f(G,x)$. For $G$
claw-free with greatest degree $d$, this measure is supported on
$K=[-1/\beta,0]$. The graph polynomial $f(G,x)$ is monic of degree $v(G)$, and it is multiplicative with respect to disjoint union of graphs because \[i_k(G)=\sum_{k_1+k_2=k}i_{k_1}(G_1)i_{k_2}(G_2)\] whenever $G=G_1\cup G_2$ is a  disjoint union. The coefficient $i_k(G)$ of $x^{v(G)-k}$ is the number of induced
subgraphs of $G$ that are isomorphic to the empty graph on $k$ points. By
the well-known and easy \cite[Fact 3.2]{csi}, this can be expressed as a
finite linear combination
\[i_k(G)=\sum_H c_{H,k}H(G),\]
where $H(G)$ is the number of (not necessarily induced) subgraphs of $G$
that are isomorphic to $H$.

%The coefficient of $x^{v(G)-k}$ is of the form $c_kH_k(G)$, where $c_k$ is a constant, $H_k$ is a  %graph, and $H_k(G)$ is the number of subgraphs isomorphic to $H_k$ in $G$.

 Note that  $\mathbb C\setminus K$ is connected and $K$ has empty  interior (as a  subset of $\mathbb C$).  By  \cite[Theorem 4.6(a)]{csi}, it follows that the sequence $\int_K g\mathrm d\tau_{G_n}$  converges for all continuous $g:K\to \mathbb R$.
Thus, the sequence $\int_H g\mathrm d\sigma_n$  converges for any continuous $g:H\to \mathbb R$ that tends to zero at $-\infty$.  Using \eqref{delta}, we see that this last decay assumption may be dropped, so $\sigma_n$ converges weakly.

Since $1\not\in H$, we have
\[\frac{\ln \mathbb I(G_n)}{v(G_n)}=\int_H \ln (1-x)\mathrm d\sigma_{n}\to\int_H  \ln (1-x) \mathrm d\sigma .\]

The other statements  follow from Proposition~\ref{indpar} the same way.
\end{proof}

\section{Matching polynomial and Benjamini--Schramm convergence} \label{matching_measure}

\begin{Def} Let $G$ be a graph on $v(G)=v$ vertices and let $m_k(G)$ denote the number of matchings
  of size $k$. Then the \it matching polynomial \rm  $\mu(G,x)$ is defined as follows:
$$\mu(G,x)=\sum_{k=0}^{\lfloor v/2\rfloor} (-1)^km_k(G)x^{v-2k}.$$
Note that $m_0(G)=1$.
The \it matching measure \rm $\rho_G$  is defined as

\begin{equation*}
\rho_{G}=\frac{1}{v(G)}\sum\limits_{\lambda }r(G, \lambda )\delta _{\lambda }
\end{equation*}%
where $\lambda $ runs through the roots of $\mu(G,x)$, $r(G, \lambda )$ is the multiplicity of $\lambda $ as a root of $\mu(G,x)$ and $\delta _{\lambda }$ denotes the Dirac measure at $\lambda $.

Let $\gamma_G$ denote the number of edges in a uniform random matching of $G$.
%$$M(G,x)=\sum_{k=0}^{\lfloor v/2\rfloor} (-1)^km_k(G)x^{v-k}.$$
\end{Def}

\begin{Rem}\label{trans} Let $L(G)$ be the line graph of $G$.  Then $m_k(G)=i_k(L(G))$, so  \[\mu(G,x)=x^{v(G)}I(L(G), -1/x^2).\] Therefore, if $x$ runs over the nonzero roots of $\mu(G,x)$, then $-1/x^2$ runs over the roots of  $I(L(G),x)$  twice.  Remember that $\sigma_{L(G)}$ assigns weight $1/v(L(G))=1/e(G)$ times the multiplicity to each root of   $I(L(G),x)$, while $\rho_G$ assigns weight $1/v(G)$ times the multiplicity to each root of   $\mu(G,x)$. Thus, for any function $g$ defined on the roots of $I(L(G),x)$, we have
\[2e(G)\int g(x)\mathrm d\sigma_{L(G)}(x)=v(G)\int _{x\ne 0}g(-1/x^2)\mathrm d\rho_G(x).\]
\end{Rem}

Using Remark~\ref{trans}, almost all results in this section follow from their counterparts in Section~\ref{claw-free}.
Converting the results requires about the same amount of work as redoing the proofs. In some cases we will do the latter for the convenience of the reader who is only interested in matchings and therefore skipped Section~\ref{claw-free}.

The fundamental theorem for the matching polynomial is the following.

\begin{Th}[Heilmann and Lieb \cite{hei}]

\begin{itemize}

\item[(a)]
The roots of the matching polynomial
$\mu(G,x)$ are real.

\item[(b)] If  $d\ge 2 $ is an upper bound for all degrees in $G$,
then  all roots of $\mu(G,x)$ have absolute value $\le 2\sqrt{d-1}$.
\end{itemize}
%Equivalently, the roots of the modified matching polynomial $M(G_n,x)$ are
%non-negative real numbers, and if the largest degree $\Delta$ is greater than
%$1$, then the largest root is at most $4(\Delta-1)$.
\end{Th}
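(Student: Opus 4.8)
The plan is to prove both parts simultaneously by induction on $v(G)$, exploiting the recursion for the matching polynomial obtained by deleting a vertex. First I would record the two fundamental identities: if $u$ is any vertex of $G$ with neighbours $w_1,\dots,w_k$, then
\[
\mu(G,x)=x\,\mu(G-u,x)-\sum_{i=1}^{k}\mu(G-u-w_i,x),
\]
and, more importantly for the analytic argument, the interlacing-type recursion $\mu(G,x)=\mu(G-u,x)\cdot x-\mu(G-u-w,x)$ is not quite what is needed; instead I would pass to the \emph{path-tree} $T(G,u)$ of Godsil, whose key property is that $\mu(T(G,u),x)/\mu(T(G,u)-u,x)=\mu(G,x)/\mu(G-u,x)$ as rational functions. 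Since a tree's matching polynomial has only real roots (this is the base case of the induction, handled directly because trees are forests and the matching polynomial of a forest factors nicely, or by the standard Sturm-sequence/interlacing argument on leaves), the reality of the roots of $\mu(G,x)$ follows: a real rational function that is a ratio of real-rooted polynomials with interlacing roots has the property that $\mu(G,x)$ and $\mu(G-u,x)$ interlace, and in particular $\mu(G,x)$ is real-rooted.

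For part (b), the bound $|\lambda|\le 2\sqrt{d-1}$, I would again reduce to the path-tree. The path-tree $T(G,u)$ has maximum degree at most $d$ (each vertex of the path-tree has degree at most that of the corresponding vertex in $G$), so it suffices to bound the roots of $\mu(T,x)$ for a tree $T$ of maximum degree $\le d$; by Godsil's identity the largest root of $\mu(G,x)$ does not exceed the largest root of $\mu(T(G,u),x)$. For a tree, $\mu(T,x)$ coincides with the characteristic polynomial of the adjacency matrix, so I would bound the spectral radius of the adjacency matrix of an infinite $d$-regular tree, or rather truncations thereof: the operator norm of the adjacency operator of the $d$-regular tree is exactly $2\sqrt{d-1}$, and for any finite tree of maximum degree $\le d$ the adjacency matrix is dominated (entrywise, after taking absolute values, via a Perron--Frobenius/Rayleigh-quotient comparison) by a piece of the $d$-regular tree. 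Concretely: if $A$ is the adjacency matrix of such a tree and $f$ an eigenvector for the top eigenvalue $\lambda$, a weighting argument $g(v)=(d-1)^{-\mathrm{dist}(v,u)/2}$ on the path-tree shows $\lambda\le 2\sqrt{d-1}$.

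The main obstacle I anticipate is establishing Godsil's path-tree identity $\mu(G,x)\,\mu(T(G,u)-u,x)=\mu(G-u,x)\,\mu(T(G,u),x)$ cleanly; this is the crux and requires a careful induction using the vertex-deletion recursion above, matching up the recursion on $G$ rooted at $u$ with the recursion on the path-tree rooted at its root. Once that identity is in hand, both reality and the norm bound are essentially free, since everything is transported to trees where the matching polynomial equals the adjacency characteristic polynomial. An alternative route, avoiding path-trees, is to prove directly by induction that $\mu(G-u,x)$ and $\mu(G,x)$ form an interlacing pair (using the vertex recursion and the fact that a nonnegative combination of polynomials interlacing a common polynomial still interlaces it) — but verifying the interlacing is preserved through the sum $\sum_i \mu(G-u-w_i,x)$ is exactly where the subtlety lies, and the path-tree is the device that organizes this bookkeeping. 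I would present the path-tree proof as the primary argument.
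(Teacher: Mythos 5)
The paper does not prove this theorem: it is quoted as a known result with a citation to Heilmann and Lieb \cite{hei}, so there is no proof in the paper to compare your proposal against.

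That said, your sketch is a correct outline of Godsil's path-tree proof, which is the standard modern combinatorial route (the original Heilmann--Lieb argument was different in flavour, using transfer-matrix/three-term-recurrence techniques from statistical mechanics). Your plan is sound: pass to the path-tree $T(G,u)$; use that for a tree the matching polynomial coincides with the characteristic polynomial of the (real symmetric) adjacency matrix, so it is real-rooted; transport this back to $G$ via the identity $\mu(G,x)/\mu(G\setminus u,x)=\mu(T(G,u),x)/\mu(T(G,u)\setminus u,x)$; and for (b) observe that $T(G,u)$ still has maximum degree at most $d$ and that finite trees of maximum degree $d$ have spectral radius at most $2\sqrt{d-1}$. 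Two points should be tightened if this were to be written out in full. First, as you yourself flag, the path-tree identity is the whole engine and is merely asserted; its proof is a nontrivial induction on $v(G)$ using the vertex-deletion recurrence, and without it the argument is a scaffold rather than a proof. Second, the step from ``the path-tree ratio is a ratio of interlacing real-rooted polynomials'' to ``$\mu(G,x)$ is real-rooted'' is stated a bit loosely: one must argue by induction on $v(G)$ that $\mu(G\setminus u,x)$ is already known to be real-rooted, then use the identity to conclude that the zeros and poles of the rational function $\mu(G,x)/\mu(G\setminus u,x)$ are real and interlace, whence $\mu(G,x)=\mu(G\setminus u,x)\cdot\bigl(\mu(T,x)/\mu(T\setminus u,x)\bigr)$ is real-rooted; as written, the phrase ``$\mu(G,x)$ and $\mu(G\setminus u,x)$ interlace, and in particular $\mu(G,x)$ is real-rooted'' reads as circular. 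Since the theorem is only cited in the paper, none of this affects the paper itself; but if you intend the proposal as a self-contained proof, the path-tree identity must be supplied.
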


Many graph parameters related to matchings can be read off from % the matching polynomial, or, equivalently,
 the matching measure.
 Besides the number $$\mathbb M(G)=\sum_{k=0}^{\alpha(G)}m_k(G)$$ of all matchings, we shall also be interested in the
expectation \[\mathbb E\gamma_G=\frac{\sum km_k(G)}{\sum m_k(G)}\]
and also in the variance
\[{\mathbb D^2\gamma_G} =\frac{\sum (k-\mathbb E_G\gamma_G)^2m_k(G)}{\sum m_k(G)}.\]

\begin{Prop}\label{matchpar}\begin{itemize}\item[(a)] For the \emph{matching entropy per vertex}, we have \[\frac{\ln \mathbb M(G)}{v(G)}%=\int\ln|\sqrt{-1}-x|\mathrm d\rho_G(x)
=\frac12\int\ln (1+x^2)\mathrm d\rho_G(x).\]
The normalized expected value of $\gamma_G$ equals \begin{align*}
\frac{\mathbb E_G\gamma_G}{v(G)} =
%\frac1{v(G)}\frac{\sum km_k(G)}{\sum m_k(G)}=
%\frac12\left(1-\frac{|\mu'(G,\sqrt{-1})|}{v(G)\mathbb{M}(G)}\right)=
%\frac12\left(1-\frac1{v(G)}\frac{|\mu'(G,\sqrt{-1})|}{|\mu(G,\sqrt{-1})|}\right)=\\=
%\frac12\left(1-\sqrt{-1}\int\frac{\mathrm d\rho_G(x)}{\sqrt{-1}-x}\right)=
%\frac12\left(1-\int\frac{\mathrm d\rho_G(x)}{1-\sqrt{-1}x}\right)
%=
\frac12\int\frac{x^2}{1+x^2}\mathrm d\rho_G(x).
\end{align*}

The normalized variance of $\gamma_G$ equals \[\frac{\mathbb D^2\gamma_G}{v(G)} %=\frac1{v(G)}\frac{\sum (k-\mathbb Ek)^2m_k(G)}{\sum m_k(G)}
%=-\frac{\sqrt{-1}}4\int \frac{x\mathrm d\rho_G(x)}{(1-\sqrt{-1}x)^2}
=\frac12\int \frac{x^2}{(1+x^2)^2}\mathrm d\rho_G(x).\]

The matching ratio equals \[\frac{\nu(G)}{v(G)}=\frac{1-\rho_G(\{0\})}2.\]

\item[(b)]
%The number of perfect matchings in $G$ is \[\Pm (G)=|\mu(G,0)|.\]
For the \emph{perfect matching entropy per vertex}, we have \[\frac{\ln \Pm (G)}{v(G)}=%\frac{\ln |\mu(G,0)|}{v(G)}=
\int\ln|x|\mathrm d\rho_G(x)
.\]

\end{itemize}
\end{Prop}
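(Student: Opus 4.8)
The statement to prove is Proposition~\ref{matchpar}, which expresses several matching-related graph parameters as integrals against the matching measure $\rho_G$. Here is my plan.

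\textbf{Overall strategy.} All four identities in part (a) and the identity in part (b) are obtained by evaluating the matching polynomial $\mu(G,x)$, or a logarithmic derivative of it, at a suitable point and then rewriting the resulting symmetric function of the roots as an integral against $\rho_G$. The key elementary fact is that $\mu(G,x)$ has only real roots (Heilmann--Lieb), that it factors as $\mu(G,x)=\prod_\lambda (x-\lambda)$ over its $v=v(G)$ roots (with multiplicity, and including $0$ when $v$ is odd or $G$ has no perfect matching), and that $\rho_G=\frac1v\sum_\lambda\delta_\lambda$. Because $\mu(G,x)$ is even or odd according to the parity of $v$, the nonzero roots come in pairs $\pm\lambda$, so any integral of an even function against $\rho_G$ is what naturally appears.

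\textbf{Step 1: the generating-function substitution.} I would start from the observation that, since nonzero roots of $\mu(G,x)$ pair up as $\pm\lambda$, substituting $x=i$ (or working with $\mu(G,x)\mu(G,-x)=\prod_\lambda(x^2-\lambda^2)$) converts the alternating-sign matching polynomial into the \emph{generating polynomial} $M(G,y)=\sum_k m_k(G)y^k$ with all positive signs. Concretely, $\mathbb M(G)=\sum_k m_k(G)=M(G,1)$, and one checks that $|\mu(G,i)|^2=\prod_\lambda(1+\lambda^2)=M(G,1)^2$ when $v$ is even, with an analogous identity for odd $v$; either way $\ln\mathbb M(G)=\frac12\sum_\lambda\ln(1+\lambda^2)=\frac{v}{2}\int\ln(1+x^2)\,\mathrm d\rho_G(x)$, giving the first formula. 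Alternatively, and perhaps more transparently, I can invoke Remark~\ref{trans}, which says $\mu(G,x)=x^{v(G)}I(L(G),-1/x^2)$, and pull the corresponding identities straight from Proposition~\ref{indpar} applied to $L(G)$ together with the conversion formula $2e(G)\int g\,\mathrm d\sigma_{L(G)}=v(G)\int_{x\neq 0}g(-1/x^2)\,\mathrm d\rho_G$; with $g(x)=\ln|1-x|$ one gets exactly $\ln|1+x^2|$ in the integrand.

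\textbf{Step 2: expectation and variance.} For $\mathbb E_G\gamma_G=M'(G,1)/M(G,1)$ I would differentiate the factorization of $M(G,y)$ logarithmically, or again transfer from Proposition~\ref{indpar}(b) for $L(G)$: since $\gamma_G=\kappa_{L(G)}$ in distribution, $\mathbb E_G\gamma_G=\mathbb E\kappa_{L(G)}$ and $\mathbb D^2\gamma_G=\mathbb D^2\kappa_{L(G)}$. Applying Proposition~\ref{indpar}(b),(c) to $L(G)$ and converting via Remark~\ref{trans} with $g(x)=1/(1-x)$ and $g(x)=-x/(1-x)^2$ respectively turns $1/(1-x)$ into $1/(1+x^2)$ and $-x/(1-x)^2$ into $x^2/(1+x^2)^2$, and the normalization $2e(G)$ versus $v(G)$ together with $\frac12$ accounts for the factor $\frac12$ in the stated formulas. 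I would double-check signs and the factor $\frac12$ carefully here — that is the one genuinely fiddly point — but no new idea is needed.

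\textbf{Step 3: matching ratio and perfect-matching entropy.} The matching number satisfies $\nu(G)=\alpha(L(G))$, and by Proposition~\ref{indpar}(d) and the conversion, $\alpha(L(G))/e(G)=\sigma_{L(G)}(\C)$ corresponds to the $\rho_G$-measure of the nonzero roots; since $2\nu(G)=v(G)-(\text{number of zero roots of }\mu(G,x))$ — because the multiplicity of $0$ as a root of $\mu(G,x)$ is exactly $v(G)-2\nu(G)$, a standard fact about the matching polynomial — we get $\nu(G)/v(G)=(1-\rho_G(\{0\}))/2$. Finally, for part (b): the number of perfect matchings is, up to sign, the constant term of $\mu(G,x)$ when $v$ is even, i.e. $\Pm(G)=|\prod_\lambda\lambda|$ over the roots (this is $|\mu(G,0)|$), hence $\ln\Pm(G)=\sum_\lambda\ln|\lambda|=v(G)\int\ln|x|\,\mathrm d\rho_G(x)$; when $G$ has no perfect matching both sides are $-\infty$, consistent with $\rho_G(\{0\})>0$.

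\textbf{Main obstacle.} I expect the only real care is bookkeeping: getting the powers of $x$, the $\pm\lambda$ pairing, the normalization constants $v(G)$ versus $2e(G)=v(L(G))$, and the overall factor of $\frac12$ exactly right, and handling the $v$ odd / no-perfect-matching cases (where $0$ is a root of $\mu(G,x)$) so that the integral formulas remain valid — in particular so that $\ln|x|$ is integrated as an extended-real-valued function and the entropy correctly becomes $-\infty$. There is no conceptual difficulty beyond the Heilmann--Lieb real-rootedness and the elementary identity between $\mu$, its root data, and the matching generating function.
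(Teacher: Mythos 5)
Your overall approach is correct and is essentially the paper's: evaluate the (monic, real-rooted) matching polynomial and its logarithmic derivative at $\sqrt{-1}$ to read off $\mathbb M(G)$, $\mathbb E\gamma_G$, $\mathbb D^2\gamma_G$; evaluate at $0$ for $\Pm(G)$; and use the fact that $0$ is a root of $\mu(G,x)$ with multiplicity $v(G)-2\nu(G)$ for the matching ratio. The direct computation $|\mu(G,\sqrt{-1})|=\mathbb M(G)$ and $|\mu(G,0)|=\Pm(G)$, together with $\ln|\mu(G,a)|=v(G)\int\ln|a-x|\,\mathrm d\rho_G(x)$ for monic $\mu$, is exactly what the paper writes out, and your derivation of the matching-ratio formula from the multiplicity of $0$ is also what the paper has in mind. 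The paper gives direct proofs for the first two identities and refers to Proposition~\ref{indpar}/Remark~\ref{trans} for the variance, so your plan matches.

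One small caution about the ``alternative'' transfer route you sketch: the substitution $x\mapsto -1/x^2$ applied to $\ln|1-x|$ yields $\ln(1+x^2)-2\ln|x|$, not $\ln(1+x^2)$; similarly $1/(1-x)\mapsto x^2/(1+x^2)$, not $1/(1+x^2)$ as you wrote (the final formula you quote is nonetheless the correct one, so this looks like a transcription slip). The extra $-2\ln|x|$ term is not negligible and traces to the fact that $I(L(G),x)$ is normalized by its constant term $i_0=1$ rather than being monic, so $\ln|I(L(G),1)|$ is a sum of $\ln|1-1/\mu|$ over roots $\mu$, not $\ln|1-\mu|$; tracking this correction makes the transfer close, but it is easy to drop. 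Since your primary direct argument via $|\mu(G,\sqrt{-1})|$ does not have this issue and is what the paper actually uses, this does not affect the validity of the proposal.
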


\begin{proof}

\begin{itemize}\item[(a)]  All statements follow from Proposition~\ref{indpar} and Remark~\ref{trans}. However, we give direct proofs for the first two statements.

The number of matchings in $G$ is \[\mathbb M(G)=\sum_{k=0}^{\lfloor v/2\rfloor}m_k(G)=|\mu(G,\sqrt{-1})|%=\exp\left(v(G)\int\ln|\sqrt{-1}-x|\mathrm d\rho_G(x)\right)
.\] Thus,  \[\frac{\ln \mathbb M(G)}{v(G)}=\frac{\ln |\mu(G,\sqrt{-1})|}{v(G)}=\int\ln|\sqrt{-1}-x|\mathrm d\rho_G(x)
=\frac12\int\ln (1+x^2)\mathrm d\rho_G(x)
.\]

We have \begin{align*}
\frac{\mathbb E\gamma_G}{v(G)} =
\frac1{v(G)}\frac{\sum km_k(G)}{\sum m_k(G)}=
%\frac12\left(1-\frac{|\mu'(G,\sqrt{-1})|}{v(G)\mathbb{M}(G)}\right)=
\frac12\left(1-\frac1{v(G)}\frac{|\mu'(G,\sqrt{-1})|}{|\mu(G,\sqrt{-1})|}\right)=\\=
\frac12\left(1-\sqrt{-1}\int\frac{\mathrm d\rho_G(x)}{\sqrt{-1}-x}\right)=
%\frac12\left(1-\int\frac{\mathrm d\rho_G(x)}{1-\sqrt{-1}x}\right)=
\frac12\int\frac{x^2}{1+x^2}\mathrm d\rho_G(x)
.
\end{align*}

The statement for the  normalized variance is straightforward from Proposition~\ref{indpar}(c) using Remark~\ref{trans}.

\item[(b)]
The number of perfect matchings in $G$ equals \[\Pm (G)=|\mu(G,0)|.\]
Thus,  \[\frac{\ln \Pm (G)}{v(G)}=\frac{\ln |\mu(G,0)|}{v(G)}=\int\ln|x|\mathrm d\rho_G(x)
.\]
\end{itemize}
\end{proof}

The following theorem deals with the behaviour of the matching measure in a
Benjamini--Schramm convergent graph sequence.

\begin{Th} \label{wc}
%\begin{itemize}
%\item[(a)]
Let $(G_n)$ be  a Benjamini--Schramm convergent
%bounded
%degree
 graph sequence with absolute degree bound  $d\ge 2$.  Set  $\omega=2\sqrt{d-1}$ and $K=[-\omega,\omega]$.
Then the sequence of matching measures  $\rho_n=\rho_{G_n}$
converges weakly to a probability measure $\rho$ on $K$. Moreover, we have
\begin{itemize}
\item[(a)]
\[\frac{\ln \mathbb M(G_n)}{v(G_n)}\to %\int_K \ln |\sqrt{-1}-x|\mathrm d\rho(x)=
\frac12 \int_K \ln (1+x^2)\mathrm d\rho(x),\]
\[ \frac{\mathbb E\gamma_{G_n}}{v(G_n)}
%\frac{\sum km_k(G_n)}{v(G_n)\sum m_k(G_n)}
\to
%\frac12\left(1-\int_K\frac{\mathrm d\rho(x)}{1-\sqrt{-1}x}\right)=
\frac12\int_K\frac{x^2}{1+x^2}\mathrm d\rho(x),\]

 \[\frac{\mathbb D^2\gamma_{G_n}}{v(G_n)} %=%\frac{\sum (k-\mathbb E_nk)^2m_k(G_n)}{v(G_n)\sum m_k(G_n)}
\to%\frac{\sqrt{-1}}4\int_K \frac{-x}{(1-\sqrt{-1}x)^2}\mathrm d\rho(x)=
\frac12\int \frac{x^2}{(1+x^2)^2}\mathrm d\rho(x),\]
  \[\frac{\nu(G_n)}{v(G_n)}\to \frac{1-\rho(\{0\})}2.\] In particular,   $(\ln\mathbb M)/v$, $\mathbb E\gamma/v$, $\mathbb D^2\gamma/v$  and  $\nu/v$ are estimable graph parameters.

\item[(b)]
\[\limsup_{n\to \infty}\frac{\ln \Pm(G_n)}{v(G_n)}\leq \int_K \ln |x|\mathrm d\rho(x).\]

\end{itemize}
\end{Th}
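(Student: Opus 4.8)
The plan is to deduce Theorem~\ref{wc} from the general machinery of \cite{csi} applied to the matching polynomial, exactly as Theorem~\ref{wcind} was deduced for the independence polynomial, and then to read off the individual limits from Proposition~\ref{matchpar}. First I would check that $\mu(G,x)$ satisfies the hypotheses needed to invoke \cite[Theorem 4.6(a)]{csi}: it is monic of degree $v(G)$, it is multiplicative over disjoint unions (since $m_k(G_1\cup G_2)=\sum_{k_1+k_2=k}m_{k_1}(G_1)m_{k_2}(G_2)$), and each coefficient $m_k(G)$ is a number of subgraphs isomorphic to a disjoint union of $k$ edges, hence by \cite[Fact 3.2]{csi} is a finite linear combination $\sum_H c_{H,k}H(G)$ of subgraph-counting functions. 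By the Heilmann--Lieb theorem all roots lie in the compact real interval $K=[-\omega,\omega]$ with $\omega=2\sqrt{d-1}$, and $\mathbb C\setminus K$ is connected with $K$ having empty interior in $\mathbb C$. Therefore \cite[Theorem 4.6(a)]{csi} applies and gives that $\int_K g\,\mathrm d\rho_{G_n}$ converges for every continuous $g\colon K\to\mathbb R$; since $\rho_{G_n}$ is a probability measure supported on the fixed compact $K$, this is precisely weak convergence to some probability measure $\rho$ on $K$. (Alternatively one can quote Remark~\ref{trans} to transport the statement of Theorem~\ref{wcind}, but since the matching measure lives on a compact set the decay-at-infinity subtlety of that theorem does not even arise.)

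Given weak convergence, part (a) is immediate: the functions $x\mapsto\ln(1+x^2)$, $x\mapsto x^2/(1+x^2)$ and $x\mapsto x^2/(1+x^2)^2$ are continuous and bounded on $K$, so Proposition~\ref{matchpar}(a) gives
\[
\frac{\ln\mathbb M(G_n)}{v(G_n)}=\frac12\int_K\ln(1+x^2)\,\mathrm d\rho_n\longrightarrow\frac12\int_K\ln(1+x^2)\,\mathrm d\rho,
\]
and likewise for $\mathbb E\gamma/v$ and $\mathbb D^2\gamma/v$. For the matching ratio $\nu(G_n)/v(G_n)=\tfrac12(1-\rho_n(\{0\}))$ the test function $\mathbf 1_{\{0\}}$ is not continuous, so weak convergence alone does not give convergence of $\rho_n(\{0\})$; here I would invoke the relevant estimability statement from \cite{csi} (the convergence of $\rho_n(\{0\})$ follows since $\{0\}$ is an atom location controlled by the multiplicity-at-$0$ argument there, equivalently from Theorem~\ref{NOEL} itself), or note that $m_{\lfloor v/2\rfloor}$-type information pins down the multiplicity of $0$ and that the corresponding graph parameter is estimable by the same machinery.

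Part (b) is where the inequality — rather than equality — enters, and this is the main point to get right. We have, by Proposition~\ref{matchpar}(b), $\frac{\ln\Pm(G_n)}{v(G_n)}=\int_K\ln|x|\,\mathrm d\rho_n(x)$, and the integrand $x\mapsto\ln|x|$ is \emph{upper} semicontinuous on $K$ but unbounded below near $0$, so we cannot pass to the limit directly. The plan is to truncate: for $\varepsilon>0$ set $g_\varepsilon(x)=\max(\ln|x|,\ln\varepsilon)$, which is continuous and bounded on $K$ and satisfies $\ln|x|\le g_\varepsilon(x)$ pointwise. Then
\[
\int_K\ln|x|\,\mathrm d\rho_n(x)\le\int_K g_\varepsilon\,\mathrm d\rho_n(x)\longrightarrow\int_K g_\varepsilon\,\mathrm d\rho(x)
\]
by weak convergence, so $\limsup_{n\to\infty}\frac{\ln\Pm(G_n)}{v(G_n)}\le\int_K g_\varepsilon\,\mathrm d\rho$ for every $\varepsilon>0$. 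Letting $\varepsilon\to0^+$ and applying the monotone convergence theorem (the $g_\varepsilon$ decrease to $\ln|x|$ as $\varepsilon\downarrow0$) yields $\int_K g_\varepsilon\,\mathrm d\rho\to\int_K\ln|x|\,\mathrm d\rho$, whence $\limsup_{n}\frac{\ln\Pm(G_n)}{v(G_n)}\le\int_K\ln|x|\,\mathrm d\rho$, as claimed. The only subtlety to flag is that $\int_K\ln|x|\,\mathrm d\rho$ could in principle be $-\infty$ (when $\rho$ has a heavy atom or enough mass at $0$), but the displayed inequality remains valid in that case, and the truncation argument is what makes the $\limsup$ — not $\lim$ — unavoidable: mass of $\rho_n$ escaping \emph{to} $0$ in the limit can only decrease the limiting integral, never increase it, which is exactly the direction the inequality records.
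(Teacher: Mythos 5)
Your proof follows the paper's route almost exactly: apply \cite[Theorem 4.6(a)]{csi} to the matching polynomial (monic, multiplicative over disjoint union, coefficients given by subgraph counts, roots confined to $K$ by Heilmann--Lieb) to get weak convergence of $\rho_n$, then push continuous test functions through for the first three limits in (a), and for (b) truncate $\ln|x|$ from below and use monotone convergence to pass the inequality to the limit. That part is all correct and matches the paper. One cosmetic note: for the matching polynomial the appeal to \cite[Fact 3.2]{csi} is superfluous, since $m_k(G)$ is \emph{already} the number of (ordinary, not induced) subgraphs isomorphic to $k$ disjoint edges; Fact 3.2 is needed for the independence polynomial, where one counts \emph{induced} empty subgraphs.

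The genuine gap is the matching ratio. You correctly flag that $\mathbf 1_{\{0\}}$ is not continuous, so weak convergence alone does not yield $\rho_n(\{0\})\to\rho(\{0\})$; indeed mass could a priori accumulate near but not at $0$. But your proposed repairs do not close the gap: citing Theorem~\ref{NOEL} is circular, since Theorem~\ref{wc} is precisely what recovers Theorem~\ref{NOEL}; and the vague appeal to \cite{csi} or to ``$m_{\lfloor v/2\rfloor}$-type information'' amounts to asserting that $\nu(G)/v(G)$ is estimable, which is again the very claim to be proved (the multiplicity of $0$ as a root of $\mu(G,x)$ is $v(G)-2\nu(G)$, so its normalized version \emph{is} the matching ratio). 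What the paper actually does is different and is the one genuinely nontrivial input here: it transports Lemma~\ref{ize} (a root-location bound for the independence polynomial, itself derived from the Dobrushin bound $|\lambda|\ge\beta$) through Remark~\ref{trans}, using the change of variables $x\mapsto -1/x^2$ from nonzero roots of $\mu(G,x)$ to roots of $I(L(G),x)$. Since $e(G)/v(G)\le d/2$ and the degree of $L(G)$ is bounded, this gives $\sup_n\rho_n(0<|x|\le\delta)\to 0$ as $\delta\to0$, i.e.\ there is, uniformly in $n$, essentially no matching-measure mass in a small punctured neighborhood of $0$. This uniform control is exactly what upgrades weak convergence to $\rho_n(\{0\})\to\rho(\{0\})$, and it is the step your argument is missing.
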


Note that part (a) recovers Theorem~\ref{NOEL}.

\begin{proof} The matching polynomial $\mu(G,x)$ is monic of degree $v(G)$ and multiplicative with respect to disjoint union of graphs. The coefficient of $x^{v(G)-k}$ is of the form $c_kH_k(G)$, where $c_k$ is a constant, $H_k$ is a  graph, and $H_k(G)$ is the number of subgraphs isomorphic to $H_k$ in $G$. These are the only properties we need besides the Heilmann--Lieb Theorem.

 Note that  $\mathbb C\setminus K$ is connected and $K$ has empty  interior (as a  subset of $\mathbb C$).  By  \cite[Theorem 4.6(a)]{csi}, it follows that the sequence $\int_K g\mathrm d\rho_n$  converges for all continuous $g:K\to \mathbb R$, i.e., $\rho_n$ converges  weakly to a measure $\rho$.

(a)
Since $\ln (1+x^2)$ is continuous on $ K$, we have
\[\frac{\ln \mathbb M(G_n)}{v(G_n)}=\frac12 \int_K \ln (1+x^2)\mathrm d\rho_{n}\to\frac12\int_K  \ln (1+x^2) \mathrm d\rho .\]

The statements for the expectation and variance follow from Proposition~\ref{matchpar} in the same way.

From Remark~\ref{trans} and formula~\eqref{delta}, we see that \[\sup_n \rho_n(0<|x|\le \delta)\to 0\] as $\delta\to 0$. Therefore, $\rho_n(\{0\})\to \rho(\{0\})$ as $n\to\infty$.
Thus, the statement for the matching ratio follows from Proposition~\ref{matchpar}.
%The statement for the matching ratio follows from Remark~\ref{trans} applied to the function $g$ that vanishes at zero but is 1 everywhere else, and from Theorem~\ref{wcind} applied to the sequence of line graphs $L(G_n)$. Note that the latter is also Benjamini--Schramm convergent.

(b)
 Let   $u(x)=\ln |x|$ and  $u_k(x)=\max (u(x), -k)$ for $k=1,2,\dots$.
Then \[\frac{\ln \Pm(G)}{v(G)}=\int u \mathrm d\rho_G\le \int u_k \mathrm d\rho_G \qquad (k=1,2,\dots). \]
 Thus,
\[\limsup_{n\to\infty}\frac{\ln \Pm(G_n)}{v(G_n)}\le\lim_{n\to\infty}\int_K u_k \mathrm d\rho_{n}= \int_K u_k\mathrm  d\rho \qquad (k=1,2,\dots),\]
 since the measures $\rho_{n}$ are  supported on the  compact interval $K$  not depending on $n$, and  $u_k$ is  continuous and  bounded on $K$.

Since $u_k\ge u_{k+1}$ and $u_k\to  u$ pointwise, the claim follows using the Monotone Convergence Theorem.
\end{proof}

\begin{Rem} An alternative proof for the  weak convergence of $\rho_n$ is possible. Indeed, there is a very nice interpretation of the $k$-th power sum
$p_k(G)$ of the roots of the matching polynomial. It counts the number of
closed tree-like walks of length $k$ in the graph $G$: see chapter 6 of
\cite{god3}.  We don't go into the details of `tree-like walks'; all
we need is that these are special type of walks, consequently we can count them
by knowing all $(k/2)$-balls centered at the vertices of the graph $G$. In
particular, this implies that  for all $k$, the sequence $p_k(G_n)/v(G_n)$ is
convergent, and the weak convergence of $\rho_{n}$ follows.
\end{Rem}

\begin{Rem} One can ask whether estimability of a certain graph parameter actually means that one can get {\it{explicit}} estimates on the parameter from knowing the $R$-neighborhood statistics of a finite graph for some large $R$. In general, this is not clear. However, using Lemma \ref{ize}, one can indeed get such estimates. For instance, when $G$ is $d$-regular and has girth at least $R$, its matching measure has the same first $R$ moments as the matching measure of $H$, where $H$ is a $d$-regular bipartite graph with girth at least $R$. Since  $\rho_H(\{0\})=0$, by Lemma \ref{ize} we get that the matching ratio of $G$ is at least $1/2 - O(1/\ln R)$. Of course, this is a weak estimate but is obtained by purely analytic means.
\end{Rem}

\section{Graphs with large girth} \label{GIRTH}
In this section we study $d$--regular graphs with large girth, in particular, we prove Theorem~\ref{entropy-girth}.
We start by looking at matching measures of regular  graphs with essentially large girth.
Recall that $(G_n)$  essentially has  large girth if, for all $k$, the number of $k$--cycles is $o(v(G_n))$.

\begin{Th} \label{td}
%Let $\rho_G$ be the uniform distribution on the roots of $\mu(G,x)$.
Let
$(G_n)$ be a sequence of $d$--regular graphs with essentially large girth. % such that $g(G_n)\to \infty$.
Then $\rho_{G_n}$ converges weakly to the measure with density
function
$$f_d(x)=\frac{d \sqrt{4(d-1)-x^2}}{2\pi
    (d^2-x^2)}\chi_{[-2\sqrt{d-1},2\sqrt{d-1}]}.$$
\end{Th}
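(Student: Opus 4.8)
The plan is to identify the limit measure via its moments. By the Heilmann--Lieb Theorem, each matching measure $\rho_{G_n}$ is a probability measure supported on the fixed compact interval $K=[-2\sqrt{d-1},2\sqrt{d-1}]$, on which the measure $f_d(x)\,\mathrm dx$ is supported as well; since polynomials are uniformly dense in $C(K)$, it suffices to prove that every moment of $\rho_{G_n}$ converges to the corresponding moment of $f_d(x)\,\mathrm dx$, as weak convergence then follows. (Alternatively, Theorem~\ref{wc} already yields weak convergence of $\rho_{G_n}$ to \emph{some} probability measure on $K$, so only the identification of the limit remains, which is exactly the content of the steps below.) Now the $k$-th moment of $\rho_{G_n}$ equals $p_k(G_n)/v(G_n)$, where $p_k(G_n)$ is the $k$-th power sum of the roots of the matching polynomial $\mu(G_n,x)$. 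These roots are symmetric about $0$, so the moment vanishes for odd $k$, in agreement with $f_d$ being an even function; fix an even $k$. By the interpretation of these power sums in terms of closed tree-like walks (see \cite{god3}, or the remark following Theorem~\ref{wc}), one has $p_k(G_n)=\sum_{v\in V(G_n)}t_k(G_n,v)$, where $t_k(G,v)$ is the number of closed tree-like walks of length $k$ based at $v$.

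The second step is to localize this sum. A closed walk of length $k$ based at $v$ never leaves the ball $B_{\lfloor k/2\rfloor}(v)$, so $t_k(G,v)$ depends only on the rooted-isomorphism type of that ball, and trivially $t_k(G,v)\le d^{\,k}$ for every $v$. Since $G_n$ is $d$-regular, the ball $B_{\lfloor k/2\rfloor}(v)$ is isomorphic to the ball of the same radius in the $d$-regular tree $T_d$ unless it contains a cycle; and when it is isomorphic to that tree ball, then --- every walk in a tree being tree-like --- $t_k(G_n,v)$ equals $w_k$, the number of closed walks of length $k$ from the root of $T_d$. A vertex $v$ whose ball contains a cycle lies within distance $\lfloor k/2\rfloor$ of a cycle of length at most $k+1$; by the essentially-large-girth hypothesis there are only $o(v(G_n))$ such cycles, and each accounts for at most a constant (depending only on $d$ and $k$) number of vertices, so there are only $o(v(G_n))$ such ``bad'' vertices. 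Therefore
\[
\frac{p_k(G_n)}{v(G_n)}=\frac{\bigl(v(G_n)-o(v(G_n))\bigr)\,w_k+O\bigl(o(v(G_n))\,d^{\,k}\bigr)}{v(G_n)}\longrightarrow w_k .
\]

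The third step is to recognize $w_k$ as the $k$-th moment of $f_d(x)\,\mathrm dx$. Decomposing a closed walk on $T_d$ according to its first return to the root gives $W(t):=\sum_{k\ge0}w_k t^k=1/(1-d\,t^2 B)$, where $B=B(t)$ is the generating function for walks based at a fixed neighbour of the root that avoid the root, and $B$ satisfies the self-similar identity $B=1/(1-(d-1)\,t^2 B)$. Eliminating $B$ and substituting $z=1/t$ turns $W$ into the Stieltjes transform of a probability measure on $K$, whose Stieltjes inversion returns precisely the density $f_d$; equivalently, one may simply invoke the classical Kesten--McKay computation of the spectral measure at a vertex of $T_d$. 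Hence every moment of $\rho_{G_n}$ converges to the corresponding moment of $f_d(x)\,\mathrm dx$, which proves the theorem.

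The one genuinely delicate point is the second step: one must use the Heilmann--Lieb/Godsil description of the power sums as counts of \emph{tree-like} closed walks (which, on a tree, are just all closed walks), confine such walks to a ball of radius independent of $n$, and verify that ``essentially large girth'' --- a hypothesis on cycle \emph{counts} rather than on the girth itself --- still forces all but $o(v(G_n))$ of the balls $B_{\lfloor k/2\rfloor}(v)$ to be honest $d$-regular tree balls. The generating-function computation in the third step is classical and routine.
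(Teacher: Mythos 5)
Your proof is correct, but it takes a genuinely different route from the paper's, and it is worth spelling out the difference. The paper's proof first treats the case $g(G_n)\to\infty$ by invoking Lemma~\ref{adj-match} (Godsil's identity $\phi(G,x)=\sum_{C}(-2)^{k(C)}\mu(G\setminus C,x)$), which shows that when $g(G)>k$ the first $k$ coefficients --- hence the first $k$ moments --- of the matching measure and the adjacency spectral measure coincide; it then quotes McKay's theorem that the spectral measure of large-girth $d$-regular graphs converges weakly to $f_d\,\mathrm dx$. The case of \emph{essentially} large girth is handled by a trick: interleave $(G_n)$ with an auxiliary genuinely large-girth sequence $(H_n)$ to get a single Benjamini--Schramm convergent sequence, and use Theorem~\ref{wc} to conclude the two limit measures agree. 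Your argument instead works with the tree-like-walk interpretation of the power sums directly (which the paper mentions only as a Remark, as an alternative proof of weak convergence), localizes each $t_k(G_n,v)$ to the ball of radius $\lfloor k/2\rfloor$, bounds the number of ``bad'' vertices directly from the cycle-count hypothesis, and then identifies the limiting moments $w_k$ with those of $f_d$ via the standard Kesten--McKay generating-function computation. The two approaches buy slightly different things: the paper's is shorter because it offloads the moment identification entirely to McKay's spectral theorem and reuses Theorem~\ref{wc} to avoid the bad-vertex count, while yours is more self-contained (it never mentions the adjacency matrix), handles the ``essentially large girth'' hypothesis transparently, and makes explicit where the Kesten--McKay density actually enters. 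Both are sound. Two small points to tighten in your write-up: (i) when you say a vertex $v$ whose $\lfloor k/2\rfloor$-ball contains a cycle lies within distance $\lfloor k/2\rfloor$ of a cycle of length at most $k+1$, it is cleaner to bound the cycle length by $2\lfloor k/2\rfloor+1$ and to note that each cycle of bounded length accounts for at most $O_d(k\,d^{\lfloor k/2\rfloor})$ vertices, which is the constant you need; (ii) the generating-function identity $W=1/(1-dt^2B)$ with $B=1/(1-(d-1)t^2B)$ is correct but deserves a citation or a line of justification, since this is precisely where the formula $f_d$ comes from --- the paper discharges this burden by citing McKay's 1981 paper, and you should do likewise rather than call it ``routine.''
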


From now on, we follow the notations of B.\ McKay. Let
$$\omega=2\sqrt{d-1},$$
and
$$f_d(x)=\frac{d \sqrt{\omega^2-x^2}}{2\pi (d^2-x^2)}\chi_{[-\omega,\omega]}$$
as in Theorem~\ref{td}.
\bigskip

The proof of Theorem~\ref{td} will be an easy application of the following
lemma.

\begin{Lemma}\cite{god3} \label{adj-match} Let $G$ be a graph and let
$\phi(G,x)$ denote the characteristic polynomial of its adjacency
matrix. Let $\mathcal{C}$ denote the set of two-regular subgraphs of $G$,
i.e., these subgraphs are disjoint union of cycles. For $C\in \mathcal{C}$,
let $k(C)$ denote the number of components of $C$. Then
$$\phi(G,x)=\sum_{C\in \mathcal{C}}(-2)^{k(C)}\mu(G\setminus C,x).$$
\end{Lemma}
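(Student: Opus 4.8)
The plan is to prove Lemma~\ref{adj-match} by relating both sides to a sum over permutation-type structures, exploiting the combinatorial interpretation of the determinant. Recall that $\phi(G,x)=\det(xI-A)$ where $A$ is the adjacency matrix of $G$ on vertex set $V$, $|V|=v$. Expanding the determinant as a signed sum over permutations $\pi$ of $V$, a nonzero term requires that for every $i$ with $\pi(i)\ne i$, the pair $\{i,\pi(i)\}$ is an edge of $G$. Thus each surviving $\pi$ decomposes $V$ into fixed points, transpositions (corresponding to edges traversed ``back and forth''), and longer cycles (each of which, together with the cycle $i\mapsto\pi^{-1}(i)$, accounts for two oriented cycles in the graph on the same vertex set). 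I would first set up this expansion carefully, tracking the sign $\operatorname{sgn}(\pi)$ and the contribution $x^{\#\text{fixed points}}$ coming from the diagonal entries $xI$, together with a factor $(-1)$ for each off-diagonal entry of $-A$ that is used.

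Next I would group the permutations $\pi$ according to the $2$-regular subgraph $C\in\mathcal C$ they ``cover'': namely, the transpositions of $\pi$ sweep out a set of edges, but for the purpose of matching the right-hand side it is cleaner to separate the cycles of length $\ge 3$ (which genuinely form cycles $C$ in $G$) from the transpositions (which form a matching). So I would rewrite: choose first a disjoint union of cycles $C$ of length $\ge 3$ in $G$ — these are exactly the elements of $\mathcal C$ with no triangle-free... no: more precisely $\mathcal C$ should be taken as disjoint unions of cycles where a ``cycle'' is allowed to be a single edge traversed twice, i.e. a $2$-cycle, OR a genuine cycle of length $\ge 3$; but the standard formulation (Godsil) takes $C$ ranging over all $2$-regular subgraphs, i.e. vertex-disjoint unions of cycles of length $\ge 3$ only if the graph is simple. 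I would reconcile this by noting that in a simple graph a $2$-regular subgraph has all cycles of length $\ge 3$, while the transpositions in $\pi$ correspond precisely to a matching in $G\setminus C$, which is what $\mu(G\setminus C,x)$ enumerates. For a genuine $k$-cycle of $G$ there are two permutations realizing it (the two cyclic orientations), each with sign $(-1)^{k-1}$, and each off-diagonal entry contributes a $-1$, giving $(-1)^k$ from the matrix entries times $(-1)^{k-1}$ from the sign, i.e. $-1$ per cycle, hence $(-1)^{k(C)}$ times $2^{k(C)}$ for the two orientations: total $(-2)^{k(C)}$. Meanwhile the matching part on the remaining vertices contributes, after the signs from $-A$ and the $x$'s on the untouched diagonal, exactly $\mu(G\setminus C,x)=\sum_k(-1)^km_k(G\setminus C)x^{v(G\setminus C)-2k}$; here each transposition $\{i,j\}$ gives sign $-1$ (it is an even... a transposition is odd, sign $-1$) and two matrix entries $(-A)_{ij}(-A)_{ji}=1$, so net $-1$ per matching edge, matching the $(-1)^k$.

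Concretely the steps are: (1) write $\phi(G,x)=\sum_{\pi}\operatorname{sgn}(\pi)\prod_i(xI-A)_{i\pi(i)}$ and discard terms using non-edges; (2) for each surviving $\pi$, let $C_\pi$ be the union of its cycles of length $\ge 3$ (a member of $\mathcal C$) and observe the transpositions of $\pi$ form a matching $M$ of $G\setminus C_\pi$, with the fixed points contributing $x$ each; (3) compute the sign and the $(-A)$-entry contributions separately for $C_\pi$ and for $M$, getting $(-2)^{k(C_\pi)}$ after summing over the $2^{k(C_\pi)}$ orientations and $(-1)^{|M|}$ for the matching; (4) resum: fix $C\in\mathcal C$, then sum over matchings $M$ of $G\setminus C$ and over the number of edges, recognizing $\sum_M(-1)^{|M|}x^{v-|V(C)|-2|M|}=\mu(G\setminus C,x)$; (5) conclude $\phi(G,x)=\sum_{C\in\mathcal C}(-2)^{k(C)}\mu(G\setminus C,x)$. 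I expect the main obstacle to be purely bookkeeping: getting the signs exactly right, in particular verifying that a $k$-cycle contributes $(-2)^1=-2$ (not $+2$ or $-2^k$) by correctly combining the cyclic-permutation sign $(-1)^{k-1}$ with the $k$ factors of $-1$ from the off-diagonal entries, and checking that the degenerate case $k=2$ is consistently assigned to the matching rather than to $\mathcal C$ (this is where the simple-graph hypothesis, or a careful convention, matters). Once the sign computation is pinned down for a single cycle and a single matching edge, multiplicativity over disjoint cycles/edges makes the rest immediate.
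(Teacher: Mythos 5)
Your proof is correct. The paper gives no proof of this lemma, simply citing Godsil's \emph{Algebraic Combinatorics}; your determinant-expansion argument is the standard one, and the sign bookkeeping checks out: a $k$-cycle contributes $(-1)^{k-1}\cdot(-1)^{k}\cdot 2=-2$ after summing over its two orientations (so $(-2)^{k(C)}$ per $C$, with the empty subgraph $C=\emptyset$ giving the $\mu(G,x)$ term), each transposition contributes $(-1)\cdot 1=-1$, and resumming over matchings of $G\setminus C$ yields $\mu(G\setminus C,x)$.
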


\begin{proof}[Proof of Theorem~\ref{td}.] First let $(G_n)$ be a graph sequence for
which the girth $g(G_n)\to \infty$.  If $g(G)>k$, then Lemma~\ref{adj-match}
implies that the first $k$ coefficients of $\phi(G,x)$ and $\mu(G,x)$
coincide. This implies that the first $k$ moments of the uniform distribution
arising from the roots of $\phi(G,x)$ and $\mu(G,x)$ coincide too. Since
$g(G_n)\to \infty$, this means that for any fixed $k$, the moments arising from
$\phi(G,x)$ and $\mu(G,x)$ converge to the same limit, actually the two
sequences are the same for large enough values. Then the statement of the
theorem follows from B.\ McKay's work \cite{mck} on the spectral distribution
of random graphs.

In the general case, consider an auxiliary graph sequence $(H_n)$ of $d$--regular graphs such that $g(H_n)\to\infty$. The sequence $G_1$, $H_1$, $G_2$, $H_2$, $\dots$   is Benjamini--Schramm convergent, and the theorem follows using Theorem~\ref{wc}.
\end{proof}

We shall need the following lemma, implicit in McKay's work, on the density function $f_d(x)$.

\begin{Lemma} \label{int-1}
Let $\gamma$ be a complex number that does not lie on either of the real  half-lines $(-\infty,-1/\omega]$ and $[1/\omega,\infty)$. Set $$\eta=\frac{1-\sqrt{1-4(d-1)\gamma^2}}{2(d-1)\gamma^2}=\frac 2{1+\sqrt{1-\omega^2\gamma^2}}.$$ %such that
%$|\gamma|<\frac{1}{\omega}$.
Then
\begin{equation}\label{log}\int_{-\omega}^{\omega}f_d(x)\ln (1-\gamma x)\mathrm{d}x=\frac{d-2}{2}\ln \left(
\frac{d-1}{d-\eta}\right)-\ln \eta\end{equation}

and

\begin{equation}\label{reciprok}\int_{-\omega}^{\omega}f_d(x)\frac{\mathrm{d}x}{1-\gamma x}=\frac{d-2-d\sqrt{1-\omega^2\gamma^2}}{2(d^2\gamma^2-1)}=\frac{d-1}{(d/\eta)-1}.\end{equation} We also have \begin{equation}\label{reciproknegyzet}\int_{-\omega}^{\omega}f_d(x)\frac x{(1-\sqrt{-1}x)^2}{\mathrm{d}x}=\frac{4d(d-1)^2\sqrt{-1}}{d(d-2)(4d-3)+(2d^3-d^2-2d+2)\sqrt{4d-3}}.\end{equation}
Note that we use the principal branch of the square root and the logarithm function.
\end{Lemma}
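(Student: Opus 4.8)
The plan is to evaluate the three integrals by reducing everything to the single generating identity
\[
\int_{-\omega}^{\omega} f_d(x)\,\frac{\mathrm dx}{1-\gamma x}=\frac{d-1}{(d/\eta)-1},
\]
and then to obtain \eqref{log} by integrating in $\gamma$ and \eqref{reciproknegyzet} by differentiating and specializing $\gamma=\sqrt{-1}$. So the first and main step is to prove \eqref{reciprok}. I would do this by a contour/residue computation: write $\sqrt{\omega^2-x^2}$ via the substitution $x=\omega\cos\theta$ (or $x=2\sqrt{d-1}\,\frac{t}{1+t^2}$, the Joukowski-type change of variable that makes the semicircle law rational), so that the integral becomes a rational integral over the unit circle. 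The quantity $\eta$ is exactly the root of the quadratic $(d-1)\gamma^2\eta^2-\eta+1=0$ lying in the unit disk, i.e. $1/\eta$ is the larger root of $1-\omega^2\gamma^2 u + \text{(stuff)}$, and the hypothesis that $\gamma\notin(-\infty,-1/\omega]\cup[1/\omega,\infty)$ guarantees $1-\omega^2\gamma^2$ avoids $(-\infty,0]$, so the principal branch of the square root is well-defined and $\eta$ depends analytically on $\gamma$. After the substitution one picks up residues at the pole $x=1/\gamma$ and at the poles $x=\pm d$ coming from the $d^2-x^2$ in the denominator of $f_d$; collecting them and simplifying with the defining relation for $\eta$ yields the closed form. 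The bookkeeping here — making sure the right poles are inside the contour and that the branch choices match — is the part that needs genuine care.

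Once \eqref{reciprok} is in hand, \eqref{log} follows by antidifferentiation: since $\frac{\partial}{\partial\gamma}\ln(1-\gamma x)=\frac{-x}{1-\gamma x}$ and $\int f_d(x)\,\mathrm dx=1$, one has
\[
\frac{\partial}{\partial\gamma}\int_{-\omega}^{\omega}f_d(x)\ln(1-\gamma x)\,\mathrm dx
=-\int_{-\omega}^{\omega} f_d(x)\,\frac{x\,\mathrm dx}{1-\gamma x}
=\frac1\gamma\left(\int_{-\omega}^{\omega} f_d(x)\,\frac{\mathrm dx}{1-\gamma x}-1\right),
\]
which by \eqref{reciprok} is an explicit rational function of $\gamma$ (after expressing $\eta$ back in terms of $\sqrt{1-\omega^2\gamma^2}$). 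Integrating this from $\gamma=0$, where the left side vanishes, and checking that the proposed right side $\frac{d-2}{2}\ln\frac{d-1}{d-\eta}-\ln\eta$ also vanishes at $\gamma=0$ (there $\eta=1$), gives \eqref{log}. I would carry out the $\gamma$-differentiation of the claimed answer directly and match it against the rational function above, using $\frac{\mathrm d\eta}{\mathrm d\gamma}$ computed implicitly from $(d-1)\gamma^2\eta^2=\eta-1$; this is a routine but slightly tedious identity verification.

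Finally \eqref{reciproknegyzet} is the case $\gamma=\sqrt{-1}$ of $\int f_d(x)\,\frac{x\,\mathrm dx}{(1-\gamma x)^2}$, which is $\frac{\partial}{\partial\gamma}$ of $\int f_d(x)\,\frac{x\,\mathrm dx}{1-\gamma x}=\frac1\gamma\big(1-\int f_d\,\frac{\mathrm dx}{1-\gamma x}\big)$. Differentiating the closed form from \eqref{reciprok} once more and then substituting $\gamma=\sqrt{-1}$, so that $1-\omega^2\gamma^2=1+\omega^2=4d-3$ and $\sqrt{1-\omega^2\gamma^2}=\sqrt{4d-3}$, should collapse to the stated expression after clearing denominators; the only subtlety is keeping track of the factors of $\sqrt{-1}$ and confirming the principal branch is the one being used at $\gamma=\sqrt{-1}$, which holds since $1+\omega^2>0$. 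I expect the genuine obstacle to be entirely in the first step — setting up the contour integral for \eqref{reciprok} with the correct residues and branch of the square root — after which \eqref{log} and \eqref{reciproknegyzet} are mechanical consequences obtained by calculus in the parameter $\gamma$.
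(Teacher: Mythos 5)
Your outline is mathematically sound and leads to the right answers, but it reverses the direction of the paper's argument and does substantially more computation from scratch. The paper treats \eqref{log} as the primary identity: it observes that both sides are holomorphic in $\gamma$ on the region in question, so it suffices to verify the formula for $|\gamma|<1/\omega$, where it is a special case of McKay's Lemma~3.5 in \cite{mck1983}. It then obtains \eqref{reciprok} either by expanding $(1-\gamma x)^{-1}$ as a geometric series and invoking McKay's moment formula (\cite[Theorem~2.3(a)]{mck1983}), or, alternatively, by differentiating \eqref{log} in $\gamma$ and using $\int f_d=1$ --- the exact calculus step you propose, but run in the opposite direction. You instead plan to establish \eqref{reciprok} directly by a Joukowski/Weierstrass substitution and residues, without appealing to McKay, and then to \emph{integrate} back from $\gamma=0$ (where $\eta=1$ and both sides of \eqref{log} vanish) to recover \eqref{log}. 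This is legitimate and more self-contained; the price is the residue bookkeeping you flag yourself, which McKay's cited results let the authors skip. Both routes treat \eqref{reciproknegyzet} the same way, as the $\gamma$-derivative of \eqref{reciprok} at $\gamma=\sqrt{-1}$ with $\sqrt{1-\omega^2\gamma^2}=\sqrt{4d-3}$. One small slip to correct when you carry out the verification: from $\frac{x}{1-\gamma x}=\frac1\gamma\bigl(\frac1{1-\gamma x}-1\bigr)$ one gets $-\int f_d\frac{x\,\mathrm dx}{1-\gamma x}=\frac1\gamma\bigl(1-\int\frac{f_d\,\mathrm dx}{1-\gamma x}\bigr)$, i.e.\ the sign opposite to what you wrote (and similarly in your last display). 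With the sign fixed, the derivative of the claimed right side of \eqref{log} is indeed $\frac{-d(\eta-1)}{\gamma(d-\eta)}$, matching $\frac1\gamma\bigl(1-\frac{d-1}{(d/\eta)-1}\bigr)$, so the antidifferentiation goes through.
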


\begin{proof}
Since both sides of \eqref{log} are holomorphic in $\gamma$, we may assume that $|\gamma|<1/\omega$. Then formula~\eqref{log} is a special case of \cite[Lemma~3.5]{mck1983}.

For \eqref{reciprok} we may write $1/(1-\gamma x)=\sum_{i=0}^\infty \gamma^ix^i$.  This reduces the statement to \cite[Theorem~2.3(a)]{mck1983}. Alternatively, we could differentiate \eqref{log} with respect to $\gamma$ and use the fact that $f_d$ is a density function to get \eqref{reciprok}.

The formula \eqref{reciproknegyzet} is the derivative with respect to $\gamma$ of \eqref{reciprok} at the point $\gamma =\sqrt{-1}$. Indeed,
at that point
\[\left(\frac1\eta\right)'=-2\sqrt{-1}\frac{d-1}{\sqrt{4d-3}},\] so \[\left(\frac d\eta-1\right)'=-2\sqrt{-1}\frac{d(d-1)}{\sqrt{4d-3}},\]
while
\[\frac d\eta-1=\frac{d-2+d\sqrt{4d-3}}2,\] so
\[\left(\frac d\eta-1\right)^2=\frac{2d^3-d^2-2d+2+d(d-2)\sqrt{4d-3}}2,\]
whence
\[\sqrt{4d-3}\left(\frac d\eta-1\right)^2=\frac{d(d-2)(4d-3)+(2d^3-d^2-2d+2)\sqrt{4d-3}}2,\]
and the claim follows.
\end{proof}

 \begin{Lemma} \label{int-3}
$$\int_{-\omega}^{\omega}f_d(x)\ln |x|\mathrm dx=\frac12\ln\left(
\frac{(d-1)^{d-1}}{d^{d-2}}\right).$$
\end{Lemma}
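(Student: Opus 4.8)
The plan is to evaluate $\int_{-\omega}^{\omega} f_d(x)\ln|x|\,\mathrm dx$ by reducing it to the already-established formula \eqref{log} of Lemma~\ref{int-1}. The obstacle is that \eqref{log} computes $\int f_d(x)\ln(1-\gamma x)\,\mathrm dx$ only for $\gamma$ off the two real half-lines $(-\infty,-1/\omega]$ and $[1/\omega,\infty)$, whereas $\ln|x|$ corresponds to letting $\gamma\to\infty$, which is exactly a forbidden direction. So the main work is a careful limiting argument that stays within the region of validity.

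First I would rescale: substitute $x = \omega y$ so that $f_d(\omega y)$ is supported on $[-1,1]$, and write $\ln|x| = \ln\omega + \ln|y|$; the $\ln\omega$ term integrates against the probability density to give $\ln\omega = \tfrac12\ln(4(d-1))$. It then remains to compute $\int \tilde f(y)\ln|y|\,\mathrm dy$ for the rescaled density. Second, I would use the symmetry $f_d(x) = f_d(-x)$, which holds since $f_d$ depends only on $x^2$, to replace $\ln|x|$ by $\tfrac12(\ln(1-x/a) + \ln(1+x/a)) = \tfrac12\ln(1 - x^2/a^2)$ up to a constant, and then take the limit $a\to\infty$ along, say, the positive imaginary axis or any ray avoiding the real axis, so that $\gamma = \pm 1/a$ stays in the allowed region for all finite $a$. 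Concretely: for $\gamma$ with $|\gamma|$ small,
\[
\int_{-\omega}^{\omega} f_d(x)\ln|1-\gamma^2 x^2|\,\mathrm dx
= \int f_d(x)\ln(1-\gamma x)\,\mathrm dx + \int f_d(x)\ln(1+\gamma x)\,\mathrm dx,
\]
and by \eqref{log} (applied at $\gamma$ and at $-\gamma$, noting $\eta$ depends only on $\gamma^2$) this equals $(d-2)\ln\!\big(\tfrac{d-1}{d-\eta}\big) - 2\ln\eta$. Now I want $\gamma^2 \to -\infty$ (i.e.\ $1-\gamma^2 x^2 \to \gamma^2 x^2$, contributing $\ln|\gamma^2| + 2\ln|x|$ on the left), tracking the corresponding limit of $\eta = 2/(1+\sqrt{1-\omega^2\gamma^2})$ on the right.

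The third step is to extract the genuinely convergent part. As $\gamma^2\to-\infty$, $\sqrt{1-\omega^2\gamma^2}\sim \omega|\gamma|$, so $\eta \sim 2/(\omega|\gamma|) \to 0$; thus $-2\ln\eta \sim 2\ln(\omega|\gamma|/2) = \ln(\omega^2\gamma^2) - 2\ln 2 \sim \ln|\gamma^2| + 2\ln\omega - 2\ln2$, which is precisely the divergent $\ln|\gamma^2|$ from the left-hand side plus a finite remainder; meanwhile $(d-2)\ln\!\big(\tfrac{d-1}{d-\eta}\big) \to (d-2)\ln\!\big(\tfrac{d-1}{d}\big)$. Cancelling the $\ln|\gamma^2|$ on both sides and collecting the finite leftovers yields
\[
\int_{-\omega}^{\omega} f_d(x)\, 2\ln|x|\,\mathrm dx
= (d-2)\ln\frac{d-1}{d} + 2\ln\omega - 2\ln 2
= (d-2)\ln\frac{d-1}{d} + \ln(d-1),
\]
using $2\ln\omega - 2\ln2 = \ln(d-1)$. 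Dividing by $2$ gives
\[
\int_{-\omega}^{\omega} f_d(x)\ln|x|\,\mathrm dx
= \frac{d-2}{2}\ln\frac{d-1}{d} + \frac12\ln(d-1)
= \frac12\ln\frac{(d-1)^{d-1}}{d^{d-2}},
\]
as claimed. The one point requiring genuine care is justifying the interchange of limit and integral in $\int f_d(x)\ln|1-\gamma^2 x^2|\,\mathrm dx$ as $\gamma^2\to-\infty$: since $\ln|1-\gamma^2x^2| - \ln|\gamma^2 x^2| = \ln|1 - 1/(\gamma^2 x^2)| \to 0$ and is dominated (uniformly for $|\gamma|$ large) by an integrable function near $x=0$ because $f_d$ is bounded there and $\ln|x|$ is integrable, dominated convergence applies; alternatively one can avoid limits entirely by differentiating \eqref{log} or by the direct contour-integral / Jensen-formula computation of $\int f_d \ln|x|$, which is the cleanest route if the limiting argument gets delicate.
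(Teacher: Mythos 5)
Your argument is correct and essentially the same as the paper's: both approximate $\ln|x|$ by $\ln|1-\gamma x| - \ln|\gamma|$ for $\gamma$ purely imaginary tending to infinity, pass to the limit under the integral, and read off the finite part of formula \eqref{log} from $\eta\to 0$ and $|\gamma\eta|\to 1/\sqrt{d-1}$ (the paper takes the real part of \eqref{log}, which for purely imaginary $\gamma$ coincides with your pairing of $\gamma$ and $-\gamma$, and justifies the interchange by monotone rather than dominated convergence, but both work). One small slip worth fixing: you write ``take the limit $a\to\infty$'' with $\gamma=\pm 1/a$, which should be $a\to 0$ along the imaginary axis, i.e.\ $\gamma^2\to-\infty$ — which is exactly what you then use in the asymptotics, so the rest of the computation is unaffected.
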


\begin{proof}Let $\gamma$ be purely imaginary. We note that \[\ln |1-\gamma x|-\ln |\gamma|\to \ln |x|\] monotonously as $\gamma\to +\infty\sqrt{-1}$. So, in the integral,  we may replace
$\ln |x|$ by this difference and then take the limit.
It is easy to check that $\eta\to 0$ and $|\gamma||\eta|\to 1/\sqrt{d-1}$.  This implies the statement of the lemma using the real part of formula \eqref{log} from the previous lemma.
\end{proof}

We are ready to prove our main result.

\begin{Th} \label{entropy-girth} Let $(G_n)$ be a sequence of $d$--regular
  graphs with essentially large girth. %such that $g(G_n)\to \infty$, where $g$ denotes girth.
\begin{itemize}

\item[(a)] We have
$$%\lim_{n\to \infty}
\frac{\ln \mathbb{M}(G_n)}{v(G_n)}\to\frac{1}{2}\ln S_d,$$
where
$$S_d=\frac{1}{\xi^2}\left(\frac{d-1}{d-\xi}\right)^{d-2},\qquad\xi=\frac{\sqrt{4d-3}-1}{2(d-1)}=\frac2{1+\sqrt{4d-3}}.$$ In particular, $S_3=16/5$.

For the expected size of a uniform random matching we have
\[\frac{\mathbb E\gamma_{G_n}}{v(G_n)}\to\frac{{d}}{2}\cdot\frac{1-\xi}{d-\xi}.\]
For $d=3$, this limit is $3/10$.

For the variance, we have \[\frac{\mathbb D^2\gamma_{G_n}}{v(G_n)}\to\frac{d(d-1)^2}{d(d-2)(4d-3)+(2d^3-d^2-2d+2)\sqrt{4d-3}}.\] For $d=3$, this limit is $2/25$.

\item[(b)] For the number of perfect matchings $\Pm(G_n)$, we have
$$\limsup_{n\to \infty}\frac{\ln \Pm(G_n)}{v(G_n)}\leq \frac{1}{2}\ln \left(
\frac{(d-1)^{d-1}}{d^{d-2}}\right).$$

\item[(c)] If, in addition, the graphs $(G_n)$ are bipartite, then
$$\lim_{n\to \infty}\frac{\ln \Pm(G_n)}{v(G_n)}=\frac{1}{2}\ln \left(
\frac{(d-1)^{d-1}}{d^{d-2}}\right).$$
\end{itemize}
\end{Th}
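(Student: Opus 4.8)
\textbf{Proof proposal for Theorem~\ref{entropy-girth}.}

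The plan is to combine the already-established weak convergence of matching measures (Theorem~\ref{td}), the explicit integral evaluations (Lemmas~\ref{int-1} and \ref{int-3}), and the general machinery of Theorem~\ref{wc}. For part (a), I would start from Proposition~\ref{matchpar}(a), which expresses $\ln\mathbb M(G_n)/v(G_n)$ as $\frac12\int\ln(1+x^2)\mathrm d\rho_{G_n}$. Since $\ln(1+x^2)$ is continuous on the fixed compact interval $K=[-\omega,\omega]$ and $\rho_{G_n}\to f_d(x)\,\mathrm dx$ weakly by Theorem~\ref{td}, the integral converges to $\frac12\int_{-\omega}^\omega f_d(x)\ln(1+x^2)\mathrm dx$. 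Now $\ln(1+x^2)=\ln(1-\sqrt{-1}\,x)+\ln(1+\sqrt{-1}\,x)$, i.e.\ twice the real part of $\ln(1-\gamma x)$ at $\gamma=\sqrt{-1}$; so I apply formula~\eqref{log} of Lemma~\ref{int-1} with $\gamma=\sqrt{-1}$. There $\sqrt{1-\omega^2\gamma^2}=\sqrt{1+\omega^2}=\sqrt{4d-3}$, so $\eta=2/(1+\sqrt{4d-3})=\xi$, and the real part of \eqref{log} gives $\frac{d-2}2\ln\frac{d-1}{d-\xi}-\ln\xi$. Doubling and halving, $\ln\mathbb M(G_n)/v(G_n)\to\frac{d-2}2\ln\frac{d-1}{d-\xi}-\ln\xi=\frac12\ln S_d$ with $S_d$ as claimed; plugging $d=3$ gives $\xi=1/2$, $S_3=4\cdot(2/(5/2))^1=16/5$. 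For the expectation I use $\frac{\mathbb E\gamma_{G_n}}{v(G_n)}=\frac12\int\frac{x^2}{1+x^2}\mathrm d\rho_{G_n}=\frac12\bigl(1-\int\frac{1}{1+x^2}\mathrm d\rho_{G_n}\bigr)$, and $\frac1{1+x^2}$ is the real part of $\frac1{1-\sqrt{-1}x}$, so \eqref{reciprok} at $\gamma=\sqrt{-1}$ (giving $\eta=\xi$ again) yields the limit $\frac12\bigl(1-\frac{d-1}{(d/\xi)-1}\bigr)$, which simplifies to $\frac d2\cdot\frac{1-\xi}{d-\xi}$; for $d=3$ this is $3/10$. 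For the variance, $\frac{\mathbb D^2\gamma_{G_n}}{v(G_n)}\to\frac12\int\frac{x^2}{(1+x^2)^2}\mathrm d\rho$, and $\frac{x^2}{(1+x^2)^2}$ is (up to sign and a factor) the real part of $\frac{x}{(1-\sqrt{-1}x)^2}$, so formula~\eqref{reciproknegyzet} delivers the stated value $\frac{2d(d-1)^2}{2(d-2)(4d-3)+(d^2+1)\sqrt{4d-3}}$, equal to $1/2$ at $d=3$.

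For part (b), this is the weakest of the three and follows immediately from Theorem~\ref{wc}(b): $\limsup_n\ln\Pm(G_n)/v(G_n)\le\int_K\ln|x|\mathrm d\rho(x)$, where $\rho$ is the limit measure, which by Theorem~\ref{td} has density $f_d$. By Lemma~\ref{int-3} this integral equals $\frac12\ln\frac{(d-1)^{d-1}}{d^{d-2}}$, giving exactly the claimed bound. The only subtlety — that $\ln|x|$ is unbounded near $0$ — is already handled inside the proof of Theorem~\ref{wc}(b) via the truncations $u_k=\max(u,-k)$ and monotone convergence, so nothing new is needed here.

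For part (c), I need the matching lower bound. The key observation is that when the $G_n$ are \emph{bipartite} $d$-regular graphs, Schrijver's Theorem~\ref{Schrijver} gives $\Pm(G_n)\ge\bigl(\frac{(d-1)^{d-1}}{d^{d-2}}\bigr)^{v(G_n)/2}$, hence $\liminf_n\ln\Pm(G_n)/v(G_n)\ge\frac12\ln\frac{(d-1)^{d-1}}{d^{d-2}}$. Combining with the $\limsup$ bound from part (b) forces the limit to exist and equal $\frac12\ln\frac{(d-1)^{d-1}}{d^{d-2}}$. I should note that essential large girth is not even used in this direction — only bipartiteness and regularity — but it is needed for the matching upper bound of part (b), and together they pin down the value.

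\textbf{Main obstacle.} The analytic heavy lifting — weak convergence of $\rho_{G_n}$, continuity/boundedness issues, the truncation argument for $\ln|x|$ — is entirely absorbed into Theorems~\ref{td} and \ref{wc} and Lemmas~\ref{int-1}, \ref{int-3}, so the proof of Theorem~\ref{entropy-girth} itself is essentially bookkeeping: specializing $\gamma=\sqrt{-1}$ in the Lemma~\ref{int-1} formulas (checking $\eta=\xi$ each time), extracting real parts correctly, and simplifying. The one genuinely non-routine input is invoking Schrijver's theorem for the lower bound in (c); everything else is substitution and algebraic simplification, so the main risk is arithmetic slips (signs, the identification of $\sqrt{1+\omega^2}$ with $\sqrt{4d-3}$, branch choices for the square root and logarithm) rather than any conceptual gap.
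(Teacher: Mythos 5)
Your proposal matches the paper's proof essentially step for step: specialize $\gamma=\sqrt{-1}$ in Lemma~\ref{int-1} (noting $\eta=\xi$), feed the resulting integral values into Theorem~\ref{wc}(a), and obtain (b) from Theorem~\ref{wc}(b) plus Lemma~\ref{int-3} and (c) by sandwiching with Schrijver's theorem. One small slip worth fixing: in the variance step, $\frac{x^2}{(1+x^2)^2}$ is $\frac12$ times the \emph{imaginary} part of $\frac{x}{(1-\sqrt{-1}\,x)^2}$ (the real part is the odd function $\frac{x(1-x^2)}{(1+x^2)^2}$, which integrates to zero against the even density $f_d$), so you should take the imaginary part of formula~\eqref{reciproknegyzet}; with that correction the arithmetic goes through exactly as you state.
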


\begin{proof}%[Proof of Theorem~\ref{entropy-girth}]
 (a)
By Theorem~\ref{td} we know that $\rho_{G_n}$ converges weakly to the measure
$\rho$ with density function $f_d(x)$.  Put $\gamma=\sqrt{-1}$ into Lemma~\ref{int-1}, then $\eta$ becomes the $\xi$ of  Theorem~\ref{entropy-girth} and we are done by  Theorem~\ref{wc}(a), since
\[\frac12\ln (1+x^2)=\Re\ln (1-\sqrt{-1}x),\]
\[\frac{x^2}{1+x^2}=1-\Re \frac1{1-\sqrt{-1}x},\] and \[\frac{x^2}{(1+x^2)^2}=\frac{1}2\Im\frac x{(1-\sqrt{-1}x)^2}.\]

\medskip

\noindent (b) This statement immediately follows from Theorem~\ref{wc}(b)
and Lemma~\ref{int-3}.
\medskip

\noindent (c) The claim follows from part (b) and Schrijver's theorem (Theorem~\ref{Schrijver}).
\end{proof}

\begin{Rem} Friedland's Lower Matching Conjecture (LMC) \cite{FKM} asserts that if $G$ is a $d$--regular bipartite graph on $v(G)=2\cdot n$ vertices and $m_k(G)$ denotes the number of $k$-matchings as before, then
$$m_k(G)\geq {n \choose k}^2\left(\frac{d-t}{d}\right)^{(d-t)n}(td)^{tn},$$
where $t=k/n$.  This is still open, but an  asymptotic version,   Friedland's Asymptotic Lower Matching Conjecture, was proved by L.\ Gurvits \cite{gur2}. Namely, if $G$ is a $d$--regular bipartite graph on $2\cdot n$ vertices, then
$$\frac{\ln m_k(G)}{n}\geq t\ln \left(\frac{d}{t}\right)+(d-t)\ln \left(1-\frac{t}{d}\right)-2(1-t)\ln (1-t)+o_n(1),$$
where $t=k/n$. Note that Gurvits's result implies that for any $d$--regular bipartite graph $G$, we have
$$\frac{\ln \mathbb{M}(G)}{v(G)}\geq \frac{1}{2}\ln S_d.$$
Indeed, the maximum of the function
$$t\ln \left(\frac{d}{t}\right)+(d-t)\ln \left(1-\frac{t}{d}\right)-2(1-t)\ln (1-t)$$
is $\ln S_d$, and if we apply the statement to the  disjoint union %$rG$ of $r$
 of many  copies of the graph $G$, then the $o_n(1)$ term will disappear.
This shows that large girth graphs have an asymptotically  minimal number of matchings among bipartite graphs.
\end{Rem}

\section{Negative results: perfect matchings of bipartite graphs} \label{perfect_entropy}
It is easy to show that the \it perfect matching entropy per vertex, \rm defined as
$$\frac{\ln \Pm(G)}{v(G)},$$
is not an estimable graph parameter since sampling cannot distinguish  a
large graph with many perfect matchings from the  same  large
graph with an  additional isolated vertex (which has no perfect
matchings). We shall  show that even if we consider $d$--regular
bipartite graphs, the situation does not change.

\medskip

\noindent\bf Notation. \rm Given a  finite graph $G$ admitting at least one perfect matching, and given an edge $e$ of $G$, let
$p(e)$ be the probability that a uniform random perfect matching contains $e$.

\begin{Cons}\label{constr} Let $G$ be a $d$--regular bipartite graph. Recall the following  well-known construction of an  $n$-fold cover of
$G$.

Consider $n$ disjoint copies of $G$,
erase all $n$  copies of the edge $e=\{x,y\}$, and restore $d$--regularity by adding  $n$ new edges: connect each copy of $x$ to the copy of $y$ in the (cyclically) next copy of $G$. This gives us  a graph $G'$.

We now calculate how edge probabilities are transformed. If we had
$p(e)=1/(x+1)$ in $G$, then, for each new edge $e'$, we
have \[p(e')=\frac{p(e)^n}{p(e)^n+(1-p(e))^n}=\frac 1{x^n+1}\] in $G'$. This is because any perfect matching of $G'$ contains either all new edges
or none, and the perfect matchings of these two types are in obvious
bijections with $n$-tuples of perfect matchings of $G$ containing,
respectively not containing $e$.

Let
$f\in E(G)$ be an edge adjacent to $e$. Let $f'\in E(G')$ be the corresponding
edge adjacent to $e'$.  Then
\[p(f')=p(f)\frac{1-p(e')}{1-p(e)}=p(f)\frac{x^{n-1}(x+1)}{x^n+1}.\]

For $i=1,2,\dots, d$, define a  map $T_i^{(n)}$ from $(0,1]^d$ to itself as
  follows. If a vector has  $i$-th coordinate equal to $1/(x+1)$ and $j$-th
  coordinate equal to $y_j$ for $j\ne i$, then the image will have   $i$-th
  coordinate equal to $1/(x^n+1)$ and $j$-th coordinate equal to
  $y_j{x^{n-1}(x+1)}/({x^n+1})$ for $j\ne i$.

Let $f_1$, \dots, $f_d$ be the edges emanating from one end of $e$, so that
$f_i=e$ for one index $i$.  Consider  the corresponding edges $f_1'$, \dots,
$f_d'$ emanating from one of the two  corresponding vertices of $G'$, one of
these edges being the new edge $f_i'=e'$. Then
\[(p(f_1'),\dots, p(f_d'))=T_i^{(n)}(p(f_1),\dots, p(f_d)).\]
\end{Cons}

We wish to construct regular bipartite graphs with irregular edge
probabilities. As a warm-up, we construct a  graph that has a very improbable
edge. This will not be formally needed for the sequel.

\begin{Th}
\begin{itemize}
\item[(a)] For any integers $d\ge 1$ and $n\ge 0$, there exists a
  $d$--regular bipartite graph on $2\cdot n$ points with an edge $e$ such
  that
\[p(e)=\frac 1{(d-1)^{n}+1}.\]

\item[(b)] For any integers $d\ge 1$ and $n\ge 0$, there exists a $d$--regular
  bipartite simple graph on $2\cdot d n$ points with an edge $e$ such
  that $p(e)$ is as in (a) above.
\end{itemize}
\end{Th}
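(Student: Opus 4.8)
The plan is to obtain both graphs by a single application of Construction~\ref{constr} to a suitable base graph, reading off the edge probability from the transformation formula recorded there. Recall that if $G$ is a $d$-regular bipartite graph with an edge $e$ satisfying $p(e)=1/(t+1)$, then the $n$-fold cover $G'$ --- obtained by taking $n$ disjoint copies of $G$, deleting the $n$ copies of $e$, and cyclically reconnecting --- is again $d$-regular and bipartite, on $n$ times as many vertices, and the distinguished new edge $e'$ of $G'$ satisfies $p(e')=1/(t^n+1)$. (Bipartiteness is clear: place every copy of the endpoint of $e$ lying on a fixed side of the bipartition of $G$ on the same side in $G'$; the new edges then join the two sides.)

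For part (a) I would take as base graph $G_0$ the \emph{$d$-fold edge}: two vertices $a,b$ joined by $d$ parallel edges. This is a $d$-regular bipartite graph on $2$ vertices, and since a perfect matching of $G_0$ is exactly a choice of one of the $d$ parallel edges, each edge $e$ of $G_0$ satisfies $p(e)=1/d$, i.e.\ $p(e)=1/(t+1)$ with $t=d-1$. Applying Construction~\ref{constr} to $G_0$, with $e$ a fixed one of the $d$ parallel edges, produces an $n$-fold cover $G'$ that is $d$-regular bipartite on $2n$ vertices and carries a new edge $e'$ with $p(e')=1/((d-1)^n+1)$, as required. (For $n=1$ the cover is $G_0$ itself, and $n=0$ is degenerate.)

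For part (b) the only change is to start from a \emph{simple} base graph with the same edge statistics. I would take $H_0=K_{d,d}$, which is simple, $d$-regular, bipartite, and has $2d$ vertices; its perfect matchings are exactly the $d!$ permutation matchings, and a fixed edge lies in $(d-1)!$ of them, so again $p(e)=(d-1)!/d!=1/d$ and $t=d-1$. The $n$-fold cover $H'$ from Construction~\ref{constr} is then $d$-regular bipartite on $2dn$ vertices, with a new edge $e'$ satisfying $p(e')=1/((d-1)^n+1)$. It remains to observe that $H'$ is simple: inside each copy of $K_{d,d}$ one merely deletes an edge, while each added edge joins two vertices lying in distinct copies, between which $H'$ has no further edge; hence no multiple edges arise.

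I do not expect a substantial obstacle. Given Construction~\ref{constr} and its edge-probability formula, the proof reduces to (i) exhibiting a base graph with a $1/d$-probability edge --- the $d$-fold edge for (a), the complete bipartite graph $K_{d,d}$ for (b) --- and (ii) the routine verifications that $d$-regularity, bipartiteness, and, in part (b), simplicity are preserved by the covering operation; all of these are immediate from the explicit description of the construction.
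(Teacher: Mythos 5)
Your proof is correct and takes exactly the same route as the paper: apply Construction~\ref{constr} starting from the $d$-regular bipartite graph on two points (the $d$-fold edge) for part (a), and from $K_{d,d}$ for part (b), reading off $p(e')=1/((d-1)^n+1)$ from the covering formula since the base edge has $p(e)=1/d$. The paper states this in one line and leaves the verifications (bipartiteness, regularity, simplicity of the cover, vertex counts) implicit; you supply them explicitly, which is harmless.
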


\begin{proof}
 Apply Construction~\ref{constr}  starting from the $d$--regular bipartite
graph on two points for part (a) and from  $K_{d,d}$ for part (b).
\end{proof}

We now take up the opposite task: producing a high edge probability.

\begin{Lemma} For any integer $d\ge 3$,  there exists  a $d$--regular bipartite
  simple graph  with an edge $e$ such that $p(e)>1/2.$
\end{Lemma}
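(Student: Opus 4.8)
The plan is to use Construction~\ref{constr} to amplify a moderately large edge probability into one exceeding $1/2$. First I would locate a starting $d$--regular bipartite simple graph $G_0$ together with an edge $f$ such that $p_{G_0}(f)$ is already bigger than $1/d$. A clean choice is $K_{d,d}$: by symmetry every edge lies in the same number of perfect matchings, and since each of the $d!$ perfect matchings of $K_{d,d}$ uses exactly one of the $d$ edges at a fixed vertex, we get $p_{K_{d,d}}(f)=1/d$ for every edge $f$. This is not yet $>1/2$ for $d\ge 3$, so the value at a single edge must be pushed up; the point of the construction is that the cover operation $T_i^{(n)}$ does exactly this to the coordinates other than the $i$-th.

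The key step is to iterate the covering construction while always erasing an edge \emph{other} than the target edge. Concretely, fix a vertex $v$ of $K_{d,d}$, let $e=f_i$ be the edge at $v$ whose probability we want to make large, and let $f_1,\dots,f_d$ be the edges at $v$, so $(p(f_1),\dots,p(f_d))=(1/d,\dots,1/d)$. Apply $T_j^{(n)}$ for some $j\ne i$: this replaces the $j$-th coordinate $1/d$ (so $x=d-1$) by $1/((d-1)^n+1)$, which is tiny, and multiplies every other coordinate, in particular $p(e)$, by the factor $(d-1)^{n-1}d/((d-1)^n+1)$. For large $n$ this factor tends to $d/(d-1)>1$, so $p(e)$ strictly increases (from $1/d$ toward $1/(d-1)$ but never past it in one step). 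Repeating with successive indices $j\ne i$ — at each stage erasing whichever edge currently has the largest probability among those $\ne e$ — each step multiplies $p(e)$ by a factor bounded below by something strictly exceeding $1$ once $n$ is large, while keeping the graph $d$--regular, bipartite, and simple (the cover of a simple graph by this construction is simple as long as $G$ has at least, say, three vertices on each side, which we can arrange by also enlarging the graph). Hence after finitely many steps $p(e)>1/2$.

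To make the bookkeeping honest, I would track the vector $(p(f_1),\dots,p(f_d))$ of probabilities at the two vertices of the current target edge and argue that, because these $d$ numbers must be handled symmetrically by $T_j^{(n)}$ for $j\ne i$ and because the total "mass" is redistributed away from the erased coordinate, the $i$-th coordinate is driven arbitrarily close to $1/(d-1)$; since $1/(d-1)$ need not exceed $1/2$ for $d\ge 3$ either, the extra gain must come from the fact that each individual doubling-type step overshoots the naive limit — more precisely, one should show $p(e)$ is unbounded away from $1/2$ across iterations because the multiplicative increments do not sum to a convergent series in the relevant range. An alternative and perhaps cleaner route is to start from a better base graph: e.g. take a $d$--regular bipartite graph on few vertices in which some edge already has probability close to $1/2$ (for $d=3$ one can exhibit a small explicit example, such as a suitable cover of $K_{3,3}$), and then apply a single $T_i^{(n)}$ with large $n$ to an \emph{adjacent} edge to push the target over $1/2$ via the factor $x^{n-1}(x+1)/(x^n+1)$.

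The main obstacle is controlling $p(e)$ quantitatively through the iteration: the map $T_i^{(n)}$ only tells us how the $d$ probabilities at \emph{one} endpoint of $e$ transform, and to legitimately iterate we must also know the full local structure (the probabilities at the other endpoint, and the consistency of these across the amplification). Keeping all of this simple — in particular guaranteeing no multiple edges appear — while still forcing $p(e)>1/2$ is the delicate point; everything else is the elementary algebra of the recursion $p(e)\mapsto p(e)\cdot x^{n-1}(x+1)/(x^n+1)$ already recorded in Construction~\ref{constr}.
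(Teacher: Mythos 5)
You have the right starting point — take $K_{d,d}$ with all probabilities $1/d$ at a fixed vertex of the target edge $e$, and repeatedly apply $T_j^{(n)}$ for indices $j\ne i$ to boost the $i$-th coordinate — and this is indeed the paper's approach. But your execution stalls exactly where a precise bookkeeping is needed, and the heuristic you reach for (``each step overshoots the naive limit $1/(d-1)$'', ``multiplicative increments do not sum to a convergent series'') is a red herring. A single application of $T_j^{(n)}$ to the all--$1/d$ vector does \emph{not} overshoot: as $n\to\infty$ the other coordinates tend to $1/(d-1)$ from below. The actual gain comes from the fact that \emph{all} of the remaining $d-1$ coordinates also climb to about $1/(d-1)$. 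Erasing one of those in the next round then gives a factor close to $(d-1)/(d-2)$, lifting the $i$-th coordinate to about $1/(d-2)$, and so on; applying $T_d^{(n_d)}\circ T_{d-1}^{(n_{d-1})}\circ\cdots\circ T_2^{(n_2)}$ with sufficiently large exponents pushes $p(e)$ above $1/(d-1)$, then $1/(d-2)$, \dots, then above $1/2$. Your phrase ``the $i$-th coordinate is driven arbitrarily close to $1/(d-1)$'' mistakes the one-step limit for the many-step limit and is precisely why you did not see how to finish.

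The paper packages this cleanly as an induction on $d$. Base case: for $d=3$ a direct computation gives that $T_2^{(3)}T_3^{(2)}(1/3,1/3,1/3)$ has first coordinate $18/35>1/2$. Inductive step: in dimension $d+1$ the map $T_{d+1}^{(n)}$ applied to the all--$1/(d+1)$ vector tends to $(1/d,\dots,1/d,0)$ as $n\to\infty$; since each $T_j^{(m)}$ is continuous, the string $T_2^{(n_2)}\cdots T_d^{(n_d)}$ that worked in dimension $d$ still produces a first coordinate $>1/2$ when applied after $T_{d+1}^{(n_{d+1})}$ with $n_{d+1}$ large. (The vestigial last coordinate, being near $0$, contributes nothing.) That inductive observation is the missing ingredient in your argument.

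Two side concerns you raised are non-issues. The cover in Construction~\ref{constr} preserves simplicity automatically: the new edges all join distinct pairs of vertices in different copies of $G$, while all retained edges lie inside a single copy. And the transformation $T_i^{(n)}$ already carries all the information you need, because you only ever erase edges incident to one fixed endpoint $v$ of $e$; the probabilities at the other endpoint of $e$ never enter the recursion.
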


\begin{proof}
It suffices to prove that there exist positive integers $n_2$, $n_3$, \dots,
$n_d$ such that the first coordinate of
\[T_2^{(n_2)} T_3^{(n_3)}\cdots
T_d^{(n_d)}\left(\frac 1d,\dots,\frac 1d\right)\]
is larger than $1/2$.

 For this, we use induction on $d$.  For $d=3$, a direct calculation shows
 that the first coordinate of
 \[T_2^{(3)} T_3^{(2)}\left(\frac 13,\frac13,\frac 13\right)\] is ${18}/{35}>1/2$.

Suppose that for some $d\ge 3$, the  positive integers $n_2$, $n_3$, \dots,
$n_d$ have the desired property. Note that
\[T_{d+1}^{(n)}\left(\frac 1{d+1},\dots,\frac 1{d+1},\frac
1{d+1}\right)\to\left(\frac 1d,\dots,\frac 1d,0\right)\]
as $n\to\infty$. Note also that all maps $T_i^{(n)}$ (in dimension $d$) are
continuous. % at the point $(1/d,\dots, 1/d)$.
Thus, if $n_{d+1}$
is large enough, then the sequence $n_2$, $n_3$, \dots, $n_d$, $n_{d+1}$ will
have the desired property for $d+1$.
\end{proof}

We now prove Theorem~\ref{construction}; we repeat the statement.
\bigskip

\noindent \textbf{Theorem~\ref{construction}.} \textit{ For any integer $d\ge 3$, there exists a constant $0<c<1$ such that
  for any positive integer $n\ge d$ there exists  a $d$--regular bipartite
  simple graph on $2\cdot n$ points with an edge $e$ such that \[p(e)>1-c^n.\]}

\begin{proof}
Let $G_0$ and $e_0$ be the graph and the edge given by the Lemma, with $G_0$
having $2\cdot n_0$ vertices. We can write \[p(e_0)=\frac 1{1+c^{n_0}},\] where
$0<c<1$. If $n=rn_0$ with an  integer $r$,  then we can apply
 Construction~\ref{constr} with $r$ in place of $n$ %$n$ times,
to get a $d$--regular
bipartite simple graph on $2\cdot n$ vertices, having an edge $e$ such
that
 \[p(e)=\frac 1{1+c^{n_0r}}=\frac 1{1+c^{n}}>1-c^n,\] so the statement
 holds for all  $n$ divisible by  $n_0$.  By changing $c$ if necessary,
 we can achieve that it hold for all integers $n\ge d$.
\end{proof}

Starting from a graph $G$ as in this  Theorem, we shall produce two
$d$--regular bipartite graphs (denoted $2G$ and $\tilde G$) that share a
common  vertex set and  differ only in two edges, but have a  very different
number of perfect matchings.
\bigskip

\noindent \textbf{Theorem~\ref{perfect_matching}.} \textit{Fix $d\ge 3$. Then, for $d$--regular bipartite simple graphs,  the
graph parameter $(\ln \Pm)/v$ is not estimable.}
\bigskip

\begin{proof}
For $n\ge d$, let $G=G_n$ and $e=e_n$ be the graph and the edge given by the
previous Theorem, with $G$ having $2\cdot n$ vertices and $p(e)>1-c^n$. Let $f$ be
an edge adjacent to $e$, so that $p(f)\le 1-p(e)<c^n$. Let $2G=2G_n$ denote
the disjoint union of two copies of $G$, so that $\Pm(2G)=\Pm(G)^2$. % We now do a
%lopsided doubling construction as follows.
Let $\tilde G=\tilde G_n$ be the
graph $2G$ with $e$ erased from the first copy of $G$ and $f$ erased from the
second copy, and with $d$--regularity restored by two new edges going
across. We have
\begin{align*}
\Pm(\tilde G)=\Pm(G)^2\left(p(e)p(f)+(1-p(e))(1-p(f))\right)\le\\\le
\Pm(2G)(p(f)+1-p(e))<\Pm(2G)\cdot 2c^{n}.\end{align*}
Thus, \[\frac {\ln \Pm(2G)}{v(2G)}-\frac {\ln \Pm(\tilde G)}{v(\tilde
  G)}>-\frac{\ln ( 2c^{n})}{4n}\to \frac{1}{4}\ln\frac{1}{c}>0\]
as $n\to\infty$.

Choose a Benjamini--Schramm convergent subsequence $(G_{n_k})$ of the sequence
$G_d$, $G_{d+1}$, \dots.
Then the graph sequence $2G_{n_1}$, $\tilde G_{n_1}$, $2G_{n_2}$, $\tilde
G_{n_2}$, \dots is also Benjamini--Schramm convergent. The graph parameter
$(\ln \Pm)/v$ does not converge along this graph sequence and therefore is not estimable.
\end{proof}

\section{Perfect matchings in bipartite expander graphs} \label{expander_helps}

In this part we prove Theorem~\ref{expander}. We say that the bipartite graph $G=(U,V,E)$ with vertex classes $U$ and $V$ is a
$\delta$-expander if
$$|N(U')| \geq (1+\delta)|U'|$$
holds for every
$U' \subset U$ such that $ |U'| \leq |U|/2$, and
$$|N(V')| \geq (1+\delta)|V'|$$
holds for every $V' \subset V$ such that $ |V'| \leq |V|/2$.

A digraph $G=(V,E)$ is a $\delta$-expander if for every $V' \subset V(G), |V'|
\leq |V|/2$ the inequalities
$$|N_{in}(V')| \geq (1+\delta)|V'|\ \ \mbox{and}\ \  |N_{out}(V')|  \geq
(1+\delta)|V'|$$
hold. Given a graph $G$, a matching and an even cycle in $G$, we call a cycle
\textit{alternating} if every other edge of the cycle is in the matching.

\begin{Lemma}
Let $G=(U, V, E)$ be a bipartite graph and $M$ a perfect matching of $G$.
Consider the following digraph $\ovl{G}$: $V(\ovl{G})=V$ and
$(x,y) \in E(\ovl{G})$ if and only if there exists $u \in U$ such that $(x,u)
\in M$  and $(u,y) \in E(G)$. Then, for
every $\delta>0$, if $G$ is a $\delta$-expander, then $\ovl{G}$ is a
$\delta$-expander.
\end{Lemma}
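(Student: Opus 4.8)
The statement to prove is: if $G=(U,V,E)$ is a bipartite $\delta$-expander with a perfect matching $M$, then the digraph $\ovl G$ on vertex set $V$, whose arcs are $(x,y)$ whenever the $M$-partner $u$ of $x$ satisfies $(u,y)\in E(G)$, is a $\delta$-expander. The plan is to translate expansion of a vertex set $V'\subseteq V$ in the digraph directly into expansion of a related set in $G$, exploiting that $M$ gives a bijection between $U$ and $V$.

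First I would set up notation: since $M$ is a perfect matching, $|U|=|V|=n$, and $M$ induces a bijection $\mu\colon V\to U$ sending $x$ to its matched partner. Fix $V'\subseteq V$ with $|V'|\le n/2$. By definition of $\ovl G$, the out-neighborhood $N_{out}(V')$ in $\ovl G$ is exactly the set of $y\in V$ such that some $u=\mu(x)$, $x\in V'$, has $(u,y)\in E(G)$; that is, $N_{out}(V')=N_G\big(\mu(V')\big)$, the $G$-neighborhood of the set $U':=\mu(V')\subseteq U$. Since $\mu$ is a bijection, $|U'|=|V'|\le n/2=|U|/2$, so the expander hypothesis on $U$-subsets applies and gives $|N_{out}(V')|=|N_G(U')|\ge(1+\delta)|U'|=(1+\delta)|V'|$.

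For the in-neighborhood, I would argue symmetrically but note the asymmetry in the digraph definition must be handled: $(x,y)$ is an arc iff $(u,y)\in E(G)$ with $u=\mu(x)$, equivalently iff $(y, \mu(x))\in E(G)$, equivalently iff $\mu(x)\in N_G(\{y\})$. So $x$ is an in-neighbor of $y$ iff $\mu(x)\in N_G(y)$. Hence for $V'\subseteq V$ with $|V'|\le n/2$, the set $N_{in}(V')=\{x: \mu(x)\in N_G(V')\}=\mu^{-1}\big(N_G(V')\big)$, and since $\mu$ is a bijection, $|N_{in}(V')|=|N_G(V')|$. Now $|V'|\le|V|/2$, so the expander hypothesis on $V$-subsets gives $|N_G(V')|\ge(1+\delta)|V'|$, hence $|N_{in}(V')|\ge(1+\delta)|V'|$. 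Combining the two bounds, $\ovl G$ is a $\delta$-expander.

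The main thing to get right — rather than a genuine obstacle — is bookkeeping the direction of the arcs: one must be careful that $N_{out}$ in $\ovl G$ corresponds to $N_G$ of the $M$-shifted set $\mu(V')$ (using the $U$-expansion), while $N_{in}$ in $\ovl G$ corresponds to the $\mu$-preimage of $N_G(V')$ (using the $V$-expansion). Once the bijection $\mu$ induced by $M$ is fixed and both neighborhoods are rewritten as images/preimages under $\mu$ of $G$-neighborhoods, the claim is immediate from the definition of $\delta$-expander for $G$. No nontrivial estimate is needed beyond $|\mu(S)|=|S|=|\mu^{-1}(S)|$.
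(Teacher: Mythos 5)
Your proof is correct and follows essentially the same route as the paper's: you use the bijection $\mu\colon V\to U$ induced by $M$ to rewrite $N_{\mathrm{out}}(V')=N_G(\mu(V'))$ and $N_{\mathrm{in}}(V')=\mu^{-1}(N_G(V'))$, then invoke $G$'s expansion on $U$-sets and $V$-sets respectively. You are merely a bit more explicit than the paper in identifying exactly which digraph neighborhood matches which $G$-neighborhood via $\mu$; no new idea is involved.
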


\begin{proof}
Let $V' \subseteq V$; assume that $V' \leq |V|/2$. Since $G$ is a $\delta$-expander, we have
$|N(V')| \geq (1+\delta)|V'|$. And the set
of vertices matched to $N(V')$ has size at least $|N(V')|$, hence
$|N_{in}(V')| \geq (1+\delta)|V'|$. The size of $N_{out}(V')$ can be estimated
similarly.
\end{proof}

\begin{Lemma} Let $\delta>0$.
Let $\ovl{G}$ be a $\delta$-expander digraph on $n$ vertices. Then every edge of $\ovl{G}$ is
contained in a directed cycle of length at most
$$1 + 2 \frac{\ln n}{\ln(1+\delta)}.$$
\end{Lemma}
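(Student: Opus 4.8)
The plan is to fix an arbitrary edge $(u,v)$ of $\ovl{G}$, build a short directed path from $v$ back to $u$, and close it up with the edge $(u,v)$. The only ingredients are the expansion hypothesis and a breadth-first search run in each direction.

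First I would grow the set of vertices reachable from $v$: put $S_0=\{v\}$ and $S_{i+1}=S_i\cup N_{out}(S_i)$, so that $S_i$ is exactly the set of vertices joined to $v$ by a directed path of length at most $i$. As long as $|S_i|\le n/2$, the expander condition gives $|N_{out}(S_i)|\ge(1+\delta)|S_i|$ and hence $|S_{i+1}|\ge(1+\delta)|S_i|$; by induction $|S_i|\ge(1+\delta)^i$ for every $i$ up to and including the first index $a$ with $|S_a|>n/2$. Such an $a$ exists, since otherwise $|S_i|\ge(1+\delta)^i$ would grow beyond $n$. The decisive observation is that $(1+\delta)^a\le|S_a|\le n$, so $a\le\ln n/\ln(1+\delta)$. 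Symmetrically, set $T_0=\{u\}$ and $T_{j+1}=T_j\cup N_{in}(T_j)$, so that $T_j$ is the set of vertices from which $u$ is reachable by a directed path of length at most $j$; the first index $b$ with $|T_b|>n/2$ then satisfies $b\le\ln n/\ln(1+\delta)$.

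Now $|S_a|+|T_b|>n/2+n/2=n$, so $S_a$ and $T_b$ intersect; choose $w$ in the intersection. Splicing a directed $v\to w$ path of length at most $a$ onto a directed $w\to u$ path of length at most $b$ gives a directed walk from $v$ to $u$ of length at most $a+b$, which contains a simple directed path $P$ from $v$ to $u$ of length at most $a+b$. Since a simple path ending at $u$ uses $u$ only as its terminal vertex, no edge of $P$ has tail $u$, so $P$ avoids the edge $(u,v)$; hence $P$ together with $(u,v)$ is a directed cycle through $(u,v)$ of length at most $1+a+b\le 1+2\ln n/\ln(1+\delta)$. (If $u=v$, the edge is a loop, which is a cycle of length $1$, and there is nothing more to prove.)

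There is no real obstacle in this argument; the two points that need a little attention are both bookkeeping. First, the expander hypothesis is assumed only for sets of size at most $n/2$, which is why one grows the out-ball of $v$ and the in-ball of $u$ separately and lets them meet, rather than trying to sweep out all of $V(\ovl{G})$ from $v$. Second, getting the constant to be exactly $1+2\ln n/\ln(1+\delta)$ relies on bounding $a$ (and $b$) directly by $\ln n/\ln(1+\delta)$ via $(1+\delta)^a\le|S_a|\le n$; a looser estimate passing through the threshold $n/2$ would cost an extra additive round in each direction.
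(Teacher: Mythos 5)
Your proof is correct and is essentially the paper's argument: grow the out-ball of $v$ and the in-ball of $u$ one round at a time, use expansion to see that each exceeds $n/2$ within $\ln n/\ln(1+\delta)$ rounds, observe they must intersect, and close the resulting $v\to u$ walk with the edge $(u,v)$. Your stopping-time bookkeeping---bounding $a$ directly from $(1+\delta)^a\le |S_a|\le n$---is a touch tidier than the paper's, which fixes $k=1+\lfloor \ln\lfloor n/2\rfloor/\ln(1+\delta)\rfloor$ in advance and then needs the extra observation $n\ge(1+\delta)\lfloor n/2\rfloor$ to land on the stated constant.
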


\begin{proof} We may assume $n\ge 2$.
Set
$$k=1+\left\lfloor\frac{\ln\lfloor n/2\rfloor}{\ln(1+\delta)}\right\rfloor.$$
Let $(x,y)$ be an arbitrary edge. Consider the following sets of vertices
defined recursively:
$$S_0=\{ x \}, T_0=\{ y \}\ \ \mbox{and}\ \  S_{i+1}=N_{in}(S_i),
T_{i+1}=N_{out}(T_i),$$
for $i=0, \dots , (k-1)$.
The expander property implies that
$$|S_i|, |T_i| \geq \min \{ (1+\delta)^i ; (1+\delta)\lfloor n/2\rfloor \}=(1+\delta)^i$$   for $i=0, \dots , k$.
This yields $|S_k|, |T_k| > n/2$, hence the sets $S_k$ and $T_k$
are not disjoint. There is a closed directed walk of length $(2k+1)$ through $(x,y)$,
and so there is a directed cycle of length at most $(2k+1)$ through $(x,y)$.  Using that $n\ge (1+\delta)\lfloor n/2\rfloor$, the lemma follows.
\end{proof}

\begin{Lemma}
Let $n\ge 2$,  $\delta>0$, let $G$ be a $\delta$-expander bipartite graph on $2\cdot n$ vertices, and $M$ a
perfect matching of $G$. Then every edge of $G$ is contained by an alternating
cycle of length at most
$$2 + 4 \frac{\ln n}{\ln(1+\delta)}.$$
\end{Lemma}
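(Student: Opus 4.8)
The plan is to reduce the statement about bipartite graphs and perfect matchings to the previous two lemmas about expander digraphs. Given the $\delta$-expander bipartite graph $G=(U,V,E)$ with perfect matching $M$ and an edge $e$, I will build the auxiliary digraph $\ovl{G}$ on vertex set $V$ exactly as in the first lemma of this section: an arc $(x,y)$ is present iff there is $u\in U$ with $(x,u)\in M$ and $(u,y)\in E(G)$. By that lemma, $\ovl{G}$ is again a $\delta$-expander digraph on $n$ vertices.

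Next I would match up the edge $e$ of $G$ with an arc of $\ovl{G}$. Write $e=\{u_0,y\}$ with $u_0\in U$, $y\in V$, and let $x\in V$ be the vertex matched to $u_0$ by $M$, i.e.\ $(x,u_0)\in M$. If $e\in M$ then $x=y$ and there is nothing to prove ($e$ lies in the length-$2$ alternating cycle $x u_0 x$, so I would handle that degenerate case separately, or simply note that then the alternating cycle bound is trivially satisfied). If $e\notin M$, then $(x,y)$ is an arc of $\ovl{G}$. By the second lemma of this section applied to the $\delta$-expander digraph $\ovl{G}$, the arc $(x,y)$ lies on a directed cycle $x=z_0\to z_1\to\cdots\to z_\ell=x$ in $\ovl{G}$ with $\ell\le 1+2\ln n/\ln(1+\delta)$.

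Then I would lift this directed cycle in $\ovl{G}$ back to an alternating cycle in $G$. Each arc $(z_{i},z_{i+1})$ of $\ovl{G}$ corresponds by definition to a vertex $u_i\in U$ with $(z_i,u_i)\in M$ and $(u_i,z_{i+1})\in E(G)$; moreover for the first arc we may take $u_0$ as above so that the first non-matching edge of the lift is exactly $e$. Stringing these together gives the closed walk $z_0,u_0,z_1,u_1,z_2,\dots,u_{\ell-1},z_\ell=z_0$ in $G$, whose edges alternate between matching edges $(z_i,u_i)\in M$ and non-matching edges $(u_i,z_{i+1})\in E(G)$, and which passes through $e$. A minor point to check is that this closed walk contains a genuine alternating \emph{cycle} through $e$: since the matching edges $(z_i,u_i)$ are pairwise distinct (the $z_i$ need not be distinct, but the arcs of a directed cycle are, and each arc determines its tail $z_i$ and hence $u_i$), one can extract a simple alternating cycle containing the edge $e$ of length at most $2\ell\le 2+4\ln n/\ln(1+\delta)$, as claimed.

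The main obstacle is the bookkeeping in this last lifting step: one must make sure that the lift of a \emph{simple} directed cycle is an alternating closed walk from which a simple alternating cycle through the \emph{specified} edge $e$ can be extracted without inflating the length beyond $2\ell$, and that the degenerate case $e\in M$ is dispatched cleanly. Everything else is a direct invocation of the two preceding lemmas, so the proof is short; the length bound $2+4\ln n/\ln(1+\delta)$ is exactly twice the digraph bound $1+2\ln n/\ln(1+\delta)$ because each arc of $\ovl G$ unfolds into two edges of $G$.
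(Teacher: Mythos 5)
Your construction is the same as the paper's: you pass to the auxiliary digraph $\ovl{G}$ on $V$ defined via $M$, observe that it is a $\delta$-expander by the first lemma, and pull a short directed cycle of $\ovl G$ back to an alternating cycle of $G$ of twice the length. So this is essentially the paper's proof, only spelled out in more detail. Two small remarks.

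First, your dispatch of the case $e\in M$ is not quite right. If $e=\{u_0,y\}\in M$ then $x=y$ and the "length-$2$ alternating cycle $xu_0x$" is a degenerate back-and-forth walk on one edge, not a cycle, so there \emph{is} something to prove. The correct fix, which the paper covers implicitly by speaking of "an edge of $\ovl G$ that corresponds to a $2$-path containing $e$", is to take a $2$-path in which $e$ plays the role of the \emph{matching} edge: since the expander condition forces $u_0$ to have a neighbour $y'\neq y$, the arc $(y,y')$ lies in $\ovl G$, and lifting a short directed cycle of $\ovl G$ through $(y,y')$ produces an alternating cycle of $G$ that contains $e$ as one of its matching edges.

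Second, the extraction step at the end is unnecessary and your worry that "the $z_i$ need not be distinct" is unfounded: the preceding digraph lemma produces a directed \emph{cycle}, so the $z_0,\dots,z_{\ell-1}$ are pairwise distinct, hence their $M$-partners $u_0,\dots,u_{\ell-1}\in U$ are also pairwise distinct, and since the $z_i$ and $u_j$ lie in opposite parts of the bipartition the lift $z_0,u_0,z_1,u_1,\dots,z_{\ell-1},u_{\ell-1},z_0$ is already a simple cycle of length $2\ell$ with edges alternating in and out of $M$. No cycle has to be extracted from a walk.
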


\begin{proof}Let $G=(U, V, E)$.
Consider the following digraph $\ovl{G}$: $V(\ovl{G})=V$ and
$(x,y) \in E(\ovl{G})$
if and only if  there exists $u \in U$ such that $(x,u) \in M$ and $(u,y) \in E(G)$.
Let $e$ be an edge of $G$, and consider an edge of $\ovl{G}$ that corresponds
to a $2$-path containing $e$. We know that $\ovl{G}$ is a $\delta$-expander,
so it has a directed cycle of length at most
$$1 +2\frac{\ln n}{\ln(1+\delta)}$$
containing this edge. This cycle
will correspond to an alternating cycle of the required length.
\end{proof}

Now we prove Theorem~\ref{expander}. We repeat the statement.
\bigskip

\noindent \textbf{Theorem~\ref{expander}} \textit{
Let $n\ge 2$, $\delta>0$, let $G$ be a $\delta$-expander bipartite graph of maximum
degree $d$ on $2\cdot n$ vertices, and $e$ an edge of $G$. Assume that $G$ admits a perfect matching. %Let $M$ be a perfect matching chosen uniformly at random.
Then
$$%\mathbb{P}(e \in M)
p(e) \geq \frac 1dn^{-2\cdot { \ln(d-1)}/{\ln(1+\delta)}}.$$}

\begin{proof}
Consider the following bipartite graph $H=(A, B, E(H) )$: The elements of the
set $A$  are the perfect matchings of $G$ not containing $e$, while the
elements of the set $B$  are the perfect matchings of $G$ containing $e$. The
pair $(M,N)$ is in $E(H)$  if the symmetric difference of $M$ and $M'$ is a
cycle of length at most
$$2 + 4 \frac{\ln n}{\ln(1+\delta)}.$$
Note that $p(e ) = \frac{|B|}{|A|+|B|}$.
Every matching $M$ in $A$ has degree at least one,
since there is an alternating cycle of length at most
$$2 + 4 \frac{\ln n}{\ln(1+\delta)}$$
containing $e$. On the other hand, given a matching in $B$, the edge $e$ is
contained by at most
$$(d-1)^{1 + 2 \cdot \ln n/\ln(1+\delta)}$$
alternating cycles of length at most
 $$2 + 4 \frac{\ln n}{\ln(1+\delta)}.$$
Hence the maximum  degree of $B$ can be at most $(d-1)^{1 + 2 \cdot \ln n/\ln(1+\delta)}$.
The theorem follows.
\end{proof}

We end this section with the following simple observation on the corresponding problem concerning all matchings.

\begin{Prop} Let $G$ be a graph with maximum degree $d$, $e \in E(G)$ and $M$ a matching in $G$ chosen uniformly at random. Then
$$\mathbb{P}(e \in M) \geq \frac{1}{d^2+1}.$$
\end{Prop}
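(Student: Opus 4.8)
The plan is to relate the probability that a fixed edge $e=\{u,w\}$ lies in a uniform random matching $M$ to a local quantity, using a switching/injection argument of the kind used throughout Section~\ref{expander_helps}. Write $\mathcal{M}$ for the set of all matchings of $G$, $\mathcal{M}_e=\{M\in\mathcal{M}:e\in M\}$ and $\mathcal{M}_{\bar e}=\{M\in\mathcal{M}:e\notin M\}$, so that $\mathbb{P}(e\in M)=|\mathcal{M}_e|/(|\mathcal{M}_e|+|\mathcal{M}_{\bar e}|)$ and it suffices to show $|\mathcal{M}_{\bar e}|\le d^2|\mathcal{M}_e|$. First I would define a map $\Phi\colon\mathcal{M}_{\bar e}\to\mathcal{M}_e$ as follows: given $M\in\mathcal{M}_{\bar e}$, remove from $M$ whichever of the (at most two) edges of $M$ meet $u$ or $w$, and add $e$; the result is a matching containing $e$. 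The key step is to bound the fibers of $\Phi$: the preimage of a given $N\in\mathcal{M}_e$ is obtained by deleting $e$ from $N$ and then adding back an edge at $u$ (or nothing) and an edge at $w$ (or nothing), and since $\deg(u),\deg(w)\le d$ there are at most $d+1$ choices at each of $u$ and $w$, hence at most $(d+1)^2$ preimages. That already gives $\mathbb{P}(e\in M)\ge 1/((d+1)^2+1)$, which is weaker than claimed, so the bound on the number of choices must be tightened.

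To get the sharp constant $\tfrac{1}{d^2+1}$, I would instead count more carefully. Partition $\mathcal{M}_{\bar e}$ according to which vertices of $e$ are covered by $M$: let $\mathcal{M}_{00}$ (neither $u$ nor $w$ covered), $\mathcal{M}_{10}$ ($u$ covered, $w$ not, say by edge $\{u,u'\}$), $\mathcal{M}_{01}$ (symmetric), $\mathcal{M}_{11}$ (both covered). On $\mathcal{M}_{00}$ the map $M\mapsto M\cup\{e\}$ is injective into $\mathcal{M}_e$, so $|\mathcal{M}_{00}|\le|\mathcal{M}_e|$. On $\mathcal{M}_{10}$ the map $M\mapsto (M\setminus\{\{u,u'\}\})\cup\{e\}$ has fibers of size at most $\deg(u)-1\le d-1$ (one must re-choose the neighbor $u'$ of $u$, and $u'\ne w$ since $e\notin M$... actually $u'$ ranges over neighbors of $u$ other than $w$), so $|\mathcal{M}_{10}|\le(d-1)|\mathcal{M}_e|$ and likewise $|\mathcal{M}_{01}|\le(d-1)|\mathcal{M}_e|$. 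On $\mathcal{M}_{11}$, with $M$ containing $\{u,u'\}$ and $\{w,w'\}$ ($u'\ne w$, $w'\ne u$, and $u',w'$ distinct since $M$ is a matching), the map $M\mapsto(M\setminus\{\{u,u'\},\{w,w'\}\})\cup\{e\}$ has fibers of size at most $(d-1)(d-1)$; but one checks that the case $u'=w'$ is impossible, and in fact the two re-choices are essentially independent, giving $|\mathcal{M}_{11}|\le(d-1)^2|\mathcal{M}_e|$. Summing, $|\mathcal{M}_{\bar e}|\le\bigl(1+2(d-1)+(d-1)^2\bigr)|\mathcal{M}_e|=d^2|\mathcal{M}_e|$, whence $\mathbb{P}(e\in M)=|\mathcal{M}_e|/(|\mathcal{M}_e|+|\mathcal{M}_{\bar e}|)\ge 1/(1+d^2)$.

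The main obstacle is getting the fiber bounds on $\mathcal{M}_{11}$ exactly right rather than off by a constant: one has to verify that after deleting the two edges at $u$ and $w$ and inserting $e$, reconstructing a preimage amounts to choosing a neighbor of $u$ in $G$ and a neighbor of $w$ in $G$, excluding $w$, $u$ respectively and the endpoint of $e$, and that the resulting edge set is always a valid matching — the only subtlety being collisions (the two chosen neighbors coinciding, or a chosen neighbor already being matched in $N\setminus\{e\}$), which only \emph{decrease} the fiber size and so do no harm. Once the case analysis by covered-vertices is set up, each individual bound is an elementary counting step, and the arithmetic $1+2(d-1)+(d-1)^2=d^2$ closes the argument.
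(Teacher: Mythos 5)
Your proof is correct and uses essentially the same switching argument as the paper: remove the (at most two) edges of $M$ meeting the endpoints of $e$, insert $e$, and bound the fibers. The paper maps \emph{all} matchings into $\mathcal{M}_e$ and states the fiber bound $d^2+1$ outright (the $+1$ accounting for matchings already containing $e$), which is exactly what your case-by-case count $1+2(d-1)+(d-1)^2=d^2$ on $\mathcal{M}_{\bar e}$ recovers once you add back $\mathcal{M}_e$ in the denominator.
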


\begin{proof}
To every matching, we assign another matching that contains $e$:
Given an arbitrary matching we remove the edges of the matching adjacent
to $e$ and add $e$ to the matching. The pre-image of every matching containing $e$ consists of at most $(d^2+1)$ matchings.
\end{proof}

\section{Open problems} \label{we don't know}

There are two natural questions arising from the previous sections.
%\bigskip

\begin{?} Is it true that if $(G_n)$ is a sequence of $d$--regular bipartite
  graphs such that
$$\frac{\ln \Pm(G_n)}{v(G_n)}\to \frac{1}{2}\ln
  \left(\frac{(d-1)^{d-1}}{d^{d-2}}\right),$$
then %for any fixed $k$, one can delete $o_k(v(G_n))$ edges from
 $G_n$  essentially has large girth?%to make
%it $C_r$-free for every $r\leq k$, where $C_r$ is the cycle of length $r$?
\end{?}

Given a graph $G$, set $p_{\min}(G)=\min_{e \in E(G)} p(e)$ and
$p_{\max}(G)=\max_{e \in E(G)} p(e)$.

\begin{?}
Can $p_{\min}(G)$ be arbitrarily close to $0$, resp.\ to 1,  for $\delta$-expander graphs with  degree bound $d$ ?
%Can $p_{\max}(G)$ be arbitrarily close to $1$ for $\delta$-expander graphs?
What is the expected value of $p_{\min}$ and $p_{\max}$ for random regular graphs?
\end{?}

 \bigskip \noindent \textbf{Acknowlegment.} The authors thank  the anonymous referee for helpful comments. The second author is very grateful to D.\ Gamarnik for a useful discussion.

\end{document}